\newtheorem{theorem}{Theorem}[section]
\newtheorem{lemma}[theorem]{Lemma}
\newtheorem{corollary}[theorem]{Corollary}
\newtheorem{sublemma}{}[theorem]
\newcommand{\ba}{\backslash}
\newcommand{\cl}{{\rm cl}}
\begin{document}

\title[A matroid extension result]{A matroid extension result}

\author{James Oxley}
\address{Mathematics Department, Louisiana State University, Baton Rouge, Louisiana, USA}
\email{oxley@math.lsu.edu}

\subjclass{05B35}
\keywords{matroid extension,  V\'amos matroid}

\date{\today}

\begin{abstract} 
Adding elements to matroids can be fraught with difficulty. 
In the V\'amos matroid $V_8$, there are four   independent sets $X_1,X_2, X_3,$ and $X_4$ such that $(X_1 \cup X_2,X_3 \cup X_4)$ is a $3$-separation while exactly three of the local connectivities $\sqcap(X_1,X_{3})$,  $\sqcap(X_1,X_{4})$, $\sqcap(X_2,X_{3})$, and $\sqcap(X_2,X_{4})$ are one, with the fourth being zero.   As is well known, there is no extension of  $V_8$ by a non-loop element $p$ such that $X_j \cup p$ is a circuit for all $j$. This paper proves that a matroid can be extended by a fixed element in the guts of a $3$-separation provided  no V\'amos-like structure is present. 
\end{abstract}

\maketitle

\section{Introduction}
\label{intro} 



The terminology here will follow~\cite{oxrox}.   Consider the V\'amos matroid, $V_8$, the rank-$4$ paving matroid on $\{a_1,a_1',a_2,a_2',b_1,b_1',b_2,b_2'\}$ having as its non-spanning circuits $\{a_1,a_1',a_2,a_2'\}$, $\{a_1,a_1', b_1,b_1'\}$, $\{a_1,a_1',b_2,b_2'\}$, $\{a_2,a_2',b_1,b_1'\}$, and $\{b_1,b_1',b_2,b_2'\}$. Then $(\{a_1,a_1',a_2,a_2'\},\{b_1,b_1',b_2,b_2'\})$ is an exact $3$-separation $(A,B)$ of $V_8$. For each $i$ in $\{1,2\}$, we have the following local connectivity conditions: 
$\sqcap(\{a_i,a_i'\},B) = 1 =  \sqcap(\{b_i,b_i'\},A)$. Moreover, $\sqcap(\{a_1,a_1'\},\{b_1,b_1'\}) = 1$. In this situation, it is tempting to try to extend $V_8$ by a rank-one element $p$ that lies on the lines spanned by $\{a_1,a_1'\}$ and $\{b_1,b_1'\}$. But this cannot be done. The proof that this extension is impossible depends on the fact that exactly three of the four local connectivities $\sqcap(\{a_i,a_i\},\{b_j,b_j'\})$ for $\{i,j\} \subseteq \{1,2\}$ are equal to one. The purpose of this paper is to show that this condition is the sole impediment to being able to add a point $p$ to the guts of a $3$-separation $(A,B)$ of a matroid   so that each of $A \cup p$ and $B \cup p$ contains a circuit containing $p$ whose local connectivity with the other side is one.

Let $(A,B)$ be an exact $3$-separation in a matroid $M$. An {\it $A$-strand} is a minimal  subset $A'$ of $A$ for which $\sqcap(A',B) = 1$. A {\it $B$-strand} is defined symmetrically. A {\it strand} is an $A$-strand or a $B$-strand. 
The following is the main result of the paper.

\begin{theorem}
\label{bigun}
Let $(A,B)$ be an exact $3$-separation in a matroid $M$. Assume there is an $A$-strand $A_0$ and a $B$-strand $B_0$ such that $\sqcap(A_0,B_0) = 1$. Then $M$ has an extension by an element $p$ in which $A_0 \cup p$ and $B_0 \cup p$ are circuits if and only if $M$ has no $A$-strand $A_1$ and $B$-strand $B_1$ distinct from $A_0$ and $B_0$, respectively, such that exactly two of $\sqcap(A_0,B_1)$, $\sqcap(A_1,B_0)$, and $\sqcap(A_1,B_1)$ are one. Moreover, when $M$ has an extension by $p$ in which $A_0 \cup p$ and $B_0 \cup p$ are circuits, this extension is unique.
\end{theorem}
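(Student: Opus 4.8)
The plan is to recast the hypotheses as a statement about a single rank-$2$ flat, the guts $G=\cl(A)\cap\cl(B)$ of $(A,B)$, on which every strand determines a point. Since $\sqcap$ is monotone, for any $A$-strand $A'$ and $B$-strand $B'$ we have $\sqcap(A',B')\le\sqcap(A',B)=1$, so each relevant connectivity is $0$ or $1$. Viewing $G$ as a (possibly virtual) line $L$, an $A$-strand $A'$ meets $L$ in one point $\pi(A')$, a $B$-strand $B'$ meets $L$ in $\pi(B')$, and $\sqcap(A',B')=1$ precisely when $\pi(A')=\pi(B')$. The hypothesis $\sqcap(A_0,B_0)=1$ reads $\pi(A_0)=\pi(B_0)=:x_0$, and the desired $p$ is a realization of $x_0$: the requirement that $A_0\cup p$ and $B_0\cup p$ be circuits says exactly that $p$ lies on $\cl(A_0)$ and on $\cl(B_0)$ but on no proper flat of either. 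In this language the forbidden configuration is a failure of transitivity of the relation $\pi(\cdot)=\pi(\cdot)$: asserting that exactly two of $\sqcap(A_0,B_1),\sqcap(A_1,B_0),\sqcap(A_1,B_1)$ equal one forces two of the three equalities among $x_0,\pi(A_1),\pi(B_1)$ while denying the third, which is impossible once these are genuine points.

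I would run both directions through single-element extension theory. Any extension $M'$ in which $A_0\cup p$ and $B_0\cup p$ are circuits has $p\in\cl(A_0)\cap\cl(B_0)$, so its modular cut $\cM$ contains $\cl(A_0)$ and $\cl(B_0)$ and hence contains the modular cut $\cM_0$ that they generate; moreover $A_0\cup p$ being a circuit forces $\cl(A_0\ba a)\notin\cM$ for all $a\in A_0$, and symmetrically for $B_0$. For necessity I would show that a forbidden configuration drives $\cM_0$ to the bottom. In the representative case $\sqcap(A_0,B_1)=\sqcap(A_1,B_0)=1$ and $\sqcap(A_1,B_1)=0$, the equalities $\pi(A_1)=\pi(A_0)=\pi(B_0)=\pi(B_1)=x_0$ give $\sqcap(A_0,A_1)=\sqcap(B_0,B_1)=1$, and a short local-connectivity computation shows that $(\cl(A_0\cup B_1),\cl(B_0\cup B_1))$ and $(\cl(A_1\cup B_0),\cl(A_0\cup A_1))$ are modular pairs meeting in $\cl(B_1)$ and $\cl(A_1)$ respectively; since each member lies above $\cl(A_0)$ or $\cl(B_0)$, both $\cl(A_1)$ and $\cl(B_1)$ enter $\cM_0$. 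But $\sqcap(A_1,B_1)=0$ makes $(\cl(A_1),\cl(B_1))$ a skew, hence modular, pair whose intersection has rank $0$, so the bottom flat lies in $\cM_0$; then $\cM_0$ contains every flat, $p$ is a loop, and no admissible extension exists. The remaining two cases of ``exactly two'' are the same transitivity failure with the strands permuted.

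For sufficiency I would take $M'$ to be the extension determined by $\cM_0$ and prove that, absent a forbidden configuration, $\cM_0$ contains no $\cl(A_0\ba a)$ and no $\cl(B_0\ba b)$, so that $A_0\cup p$ and $B_0\cup p$ are circuits. The crux is to control every flat that the generation of $\cM_0$ forces in: beginning with the flats above $\cl(A_0)$ and above $\cl(B_0)$ and closing under modular-pair intersections, I would prove by induction on the generation that each flat entering $\cM_0$ is one whose closure meets $L$ at $x_0$---equivalently, one whose trace on $A$ or on $B$ is spanned by strands all mapping to $x_0$. The no-forbidden-configuration hypothesis is exactly what guarantees that the points forced to coincide with $x_0$ never conflict, so no flat missing $x_0$ is ever produced; in particular no $\cl(A_0\ba a)$ appears, since by minimality of the strand $A_0$ such a flat misses $x_0$, and $\cM_0$ never descends below $\cl(A_0)\wedge\cl(B_0)$. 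This step---bounding the modular closure---is where I expect the real work to lie, and I would isolate it as a lemma: under the hypothesis, the intersection of any two flats arising in the construction again meets $L$ precisely at $x_0$.

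Finally, for uniqueness I would argue that the two circuit conditions already determine the whole rank function of $M'$. For each $X\subseteq E$ we have $r'(X\cup p)\in\{r(X),r(X)+1\}$, with $r'(X\cup p)=r(X)$ exactly when $p\in\cl'(X)$, and since $p\in\cl(A_0)\cap\cl(B_0)$ the bounds $r(X)\le r'(X\cup p)\le\min\{r(X\cup A_0),r(X\cup B_0)\}$ hold in every admissible extension. Using $\sqcap(A_0,B_0)=1$, so that $x_0$ is a single point of $L$ rather than a subline, I would show these bounds coincide, so that membership $p\in\cl'(X)$ is a property of $M$ and $x_0$ alone; hence the modular cut, and therefore the extension, is forced. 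Equivalently, any admissible modular cut lies between $\cM_0$ and the largest admissible cut, and the transitivity afforded by the hypothesis collapses this interval to $\cM_0$.
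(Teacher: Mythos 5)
Your high-level picture of the extension---the modular cut should consist of the flats ``through'' the common attachment point of $A_0$ and $B_0$, and the forbidden configuration is a transitivity failure---matches the paper's, but the proposal has a genuine gap exactly where the content of the theorem lies, namely the sufficiency direction. You propose to take the modular cut $\cM_0$ generated by $\{\cl(A_0),\cl(B_0)\}$ and to ``prove by induction on the generation that each flat entering $\cM_0$ meets $L$ at $x_0$.'' That induction is never carried out, and as formulated it cannot even be started, because the line $L$, the point $x_0$, and the map $\pi$ do not exist in $M$: the relation $\sqcap(A',B')=1$ between strands is not transitive in general, and $V_8$ is precisely a matroid in which each strand would have to ``determine a point'' of the guts yet three of the four relevant connectivities are one. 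So the well-definedness of $\pi$ is equivalent to what is being proved; it is not a tool you may assume. The paper breaks this circularity with a device your proposal lacks: it first \emph{materializes} the guts line, using the Geelen--Gerards--Whittle result (Lemma~\ref{exterminate}, Corollary~\ref{exterminate2}) to freely add two independent clones $x$ and $y$ so that $L=\cl(\{x,y\})$ is an actual line of $M''$. Only then can the candidate extension be described concretely: $r(X\cup p)=r_{M''}(X)$ if and only if $X$ contains a \emph{special} strand (any strand in the component of the strand graph containing $A_0$ and $B_0$---note that $\cM_0$ is forced to swallow all of these, not just $A_0$ and $B_0$) or $\cl_{M''}(X)\supseteq L$. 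The heart of the paper is then a minimal-counterexample verification that this $r$ is submodular, occupying roughly a dozen sublemmas (\ref{p40} through \ref{allones}), and it is \emph{inside} that verification that the no-forbidden-configuration hypothesis is actually used, in the form that the bunch of special strands is complete (Lemma~\ref{complete}), e.g.\ to assert $\sqcap((X'\cup Y')\cap A,B_0)\ge 1$ whenever $X'\cap A$ contains a special strand. Your proposal contains no substitute for this verification; you yourself flag it as ``where the real work lies,'' which is an accurate self-assessment of the gap.

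Two secondary points. Your necessity argument (show $\cl(A_1),\cl(B_1)\in\cM_0$ via modular pairs of flats above the generators, then use the skew---hence modular---pair $(\cl(A_1),\cl(B_1))$ to force the rank-zero flat into $\cM_0$) is a genuinely different and plausible route from the paper's, which instead argues by circuit elimination inside the putative extension that all four of the $\sqcap(A_i,B_j)$ would equal one; but your specific claims, e.g.\ that $(\cl(A_0\cup B_1),\cl(B_0\cup B_1))$ is a modular pair meeting in $\cl(B_1)$, are asserted rather than proved, and they do require the local-connectivity computations you wave at. Finally, your uniqueness sketch contains a false step: the bounds $r(X)\le r'(X\cup p)\le\min\{r(X\cup A_0),r(X\cup B_0)\}$ certainly do not coincide for all $X$ (take $X$ skew to $A_0\cup B_0$), so membership $p\in\cl'(X)$ is not pinned down this way. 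In the paper, uniqueness is a separate argument: for an arbitrary admissible extension one again adds the clones, invokes Corollary~\ref{exterminate2} to identify the deletion with $M''$, and then shows (sublemma~\ref{sss} and the analysis following it, again using completeness of the bunch) that its rank function must agree with the constructed one; it is not a formal consequence of the minimality of $\cM_0$.
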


It is natural to consider performing multiple extensions of the type in the last theorem. In Section~\ref{mc}, we prove the following  result along with a natural generalization of it that allows for arbitrarily many extensions.

\begin{theorem}
\label{multi}
Let $(X,Y,Z)$ be a partition of the ground set of a matroid $M$ where $Y$ may be empty. Let $(X,Y \cup Z)$ and $(X \cup Y,Z)$  be exact $3$-separations of $M$. Assume there is an $X$-strand $X_0$ and a $(Y \cup Z)$-strand $Y_0$ of $M$ such that $\sqcap(X_0,Y_0) = 1$ and $M$ has an extension by an element $p$ so that $X_0 \cup p$ and $Y_0 \cup p$ are circuits. Assume there is an $(X\cup Y)$-strand $Y_1$ and a $Z$-strand $Z_1$ of $M$ such that $\sqcap(Y_1,Z_1) = 1$ and $M$ has an extension by an element $q$ so that $Y_1 \cup q$ and $Z_1 \cup q$ are circuits. Then $M$ has a unique extension by the elements $p$ and $q$ such that $X_0 \cup p, Y_0 \cup p, Y_1 \cup q$, and $Z_1 \cup q$  are circuits.
\end{theorem}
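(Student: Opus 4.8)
The plan is to realise the desired matroid by two successive single-element extensions, invoking Theorem~\ref{bigun} twice, and to obtain uniqueness from its uniqueness clause. First I would apply Theorem~\ref{bigun} to the exact $3$-separation $(X,Y\cup Z)$ with $A$-strand $X_0$ and $(Y\cup Z)$-strand $Y_0$: since $\sqcap(X_0,Y_0)=1$ and the $p$-extension exists by hypothesis, that extension is unique; call it $M_p$, so $M_p\ba p=M$ and $X_0\cup p$, $Y_0\cup p$ are circuits of $M_p$. I would then add $q$ to $M_p$ by a second application of Theorem~\ref{bigun}, now to the pair $(X\cup Y\cup p,\,Z)$ with $A$-strand $Y_1$ and $B$-strand $Z_1$.

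The routine hypotheses for this second application all flow from the single observation that $M_p\ba p=M$, so that $r_{M_p}(S)=r_M(S)$, and hence $\sqcap_{M_p}$ agrees with $\sqcap_M$, for every $S\subseteq E(M)$. Because $p\in\cl_{M_p}(X_0)\subseteq\cl_{M_p}(X\cup Y)$, moving $p$ onto the side $X\cup Y$ leaves the connectivity unchanged, so $(X\cup Y\cup p,\,Z)$ is again an exact $3$-separation. For $Z'\subseteq Z$ one has $\sqcap_{M_p}(Z',X\cup Y\cup p)=\sqcap_M(Z',X\cup Y)$, so the $Z$-strands of $M_p$ coincide with those of $M$ and $Z_1$ is one of them; likewise $Y_1$ stays a minimal subset of $X\cup Y\cup p$ with local connectivity one to $Z$ (any proper subset avoids $p$ and so already fails in $M$), hence an $(X\cup Y\cup p)$-strand, and $\sqcap_{M_p}(Y_1,Z_1)=\sqcap_M(Y_1,Z_1)=1$.

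The substantive step is to show that $M_p$ has no forbidden configuration, so that the existence half of Theorem~\ref{bigun} yields the $q$-extension. Suppose otherwise: there are strands $Y_2,Z_2$, distinct from $Y_1,Z_1$, for which exactly two of $\sqcap_{M_p}(Y_1,Z_2)$, $\sqcap_{M_p}(Y_2,Z_1)$, $\sqcap_{M_p}(Y_2,Z_2)$ equal one. Every $Z$-strand, and every $(X\cup Y\cup p)$-strand avoiding $p$, is a strand of $M$ with identical local connectivities; so if $p\notin Y_2$ this is a forbidden configuration of $M$ for $(X\cup Y,Z)$, which is impossible because $M$ has the $q$-extension. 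Hence $p\in Y_2$, say $Y_2=Y_2'\cup p$. Minimality forces $p\notin\cl_{M_p}(Y_2')$ and $p\in\cl_{M_p}(Y_2'\cup Z)$ with $\sqcap_M(Y_2',Z)=0$, and a direct rank computation gives the correction rule
\[
\sqcap_{M_p}(Y_2'\cup p,\,Z')=\sqcap_M(Y_2',Z')+\big[\,p\in\cl_{M_p}(Y_2'\cup Z')\,\big]
\]
for each $Z'\subseteq Z$, the bracket being $1$ or $0$ according as the containment holds. I expect this analysis of the \emph{new} strands through $p$ to be the main obstacle: one must translate each containment $p\in\cl_{M_p}(Y_2'\cup Z')$ into membership of $\cl_M(Y_2'\cup Z')$ in the modular cut defining $M_p$, and then exploit that this cut is controlled by $\cl_M(X_0)$ and $\cl_M(Y_0)$ to show that any configuration the correction rule could manufacture for $(X\cup Y\cup p,Z)$ would force a forbidden configuration of $M$ for the \emph{other} separation $(X,Y\cup Z)$ — which is excluded because $M$ has the $p$-extension. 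This is precisely where both hypotheses, the existence of the $p$-extension and of the $q$-extension, are used at once.

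With the forbidden configuration ruled out, Theorem~\ref{bigun} applied to $M_p$ produces a unique extension $M_{pq}$ of $M_p$ by $q$ in which $Y_1\cup q$ and $Z_1\cup q$ are circuits; since $M_{pq}\ba q=M_p$, the circuits $X_0\cup p$ and $Y_0\cup p$, which avoid $q$, persist, so all four prescribed sets are circuits. For uniqueness, let $N$ be any extension of $M$ by $p$ and $q$ in which $X_0\cup p,\,Y_0\cup p,\,Y_1\cup q,\,Z_1\cup q$ are circuits. Then $N\ba q$ is an extension of $M$ by $p$ making $X_0\cup p$ and $Y_0\cup p$ circuits, so $N\ba q=M_p$ by the uniqueness clause of Theorem~\ref{bigun}; hence $N$ is an extension of $M_p$ by $q$ making $Y_1\cup q$ and $Z_1\cup q$ circuits, and the same clause, applied now to $M_p$, gives $N=M_{pq}$. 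Thus the two-element extension exists and is unique, and the promised generalisation to arbitrarily many elements follows by iterating this argument.
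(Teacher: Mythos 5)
Your overall strategy matches the paper's: build $M_p$ first, then try to apply Theorem~\ref{bigun} a second time to the separation $(X\cup Y\cup p,\,Z)$ of $M_p$, and argue that a \ffsv-style obstruction (a pair of strands $Y_2,Z_2$ with exactly two of the three relevant local connectivities equal to one) cannot exist; your reduction to the case $p\in Y_2$, and your uniqueness argument via two applications of the uniqueness clause, are both correct and are exactly what the paper does. The problem is that the case $p\in Y_2$ \emph{is} the proof, and you have not done it: your text explicitly defers it (``I expect this analysis of the new strands through $p$ to be the main obstacle''), offering only a rank-correction formula $\sqcap_{M_p}(Y_2'\cup p,Z')=\sqcap_M(Y_2',Z')+[\,p\in\cl_{M_p}(Y_2'\cup Z')\,]$ and a vague plan to push closure containments into ``the modular cut defining $M_p$.'' That plan is not obviously executable: the modular cut of $M_p$ is not simply generated by $\cl_M(X_0)$ and $\cl_M(Y_0)$ in a way you can compute with directly -- from the construction in the proof of Theorem~\ref{bigun}, it consists of the flats containing \emph{some} special strand (any strand in the bunch containing $X_0$ and $Y_0$) or spanning the guts line, so controlling it requires exactly the strand analysis you are trying to avoid.

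The paper closes this gap with a device your proposal lacks: before introducing $p$, it freely adds two independent clones $x,y$ to the guts line of $(X,Y\cup Z)$ (Corollary~\ref{exterminate2} and Lemma~\ref{unicorn}), so that one may assume $\{x,y\}\subseteq Y$. This makes it possible to convert the troublesome $M_p$-strand $Y_2$ containing $p$ into a genuine strand of $M$: when $Y_2\cap X=\emptyset$, the parallel-connection structure of $M_p$ along $p$ shows $X_0\cup(Y_2-p)$ is an $(X\cup Y)$-strand of $M$ with the same local connectivities to $Z_1,Z_2$; when $Y_2\cap X\neq\emptyset$, a $2$-sum argument with the $5$-point line $\{u,p,x,y,w\}$ shows $(Y_2-(X\cup p))\cup\{x,y\}$ is an $(X\cup Y)$-strand of $M$. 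In either case one then contradicts Theorem~\ref{bigun} applied to $M$ and the separation $(X\cup Y,Z)$, using the hypothesis that the $q$-extension of $M$ exists. Without the clones $x,y$ (or some equivalent way of replacing $p$ inside $M$), the new strands through $p$ have no counterpart in $M$, and your correction rule alone gives you nothing to contradict. So the proposal is a correct skeleton with the substantive middle missing.
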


The next section introduces some terminology and proves some basic lemmas. The proof of Theorem~\ref{bigun} will be given in Section~\ref{proofsec}.

\section{Preliminaries}
\label{prelim}

Let $U$ and $V$ be subsets of the ground set of a matroid $M$. We say that $\{U,V\}$ is a {\it modular pair} of sets if 
$r(U) + r(V) = r(U \cup V) + r(U\cap V)$. The following  result will be useful. We omit the straightforward  proof. 

\begin{lemma}
\label{elem}
Let $\{U,V\}$ be a modular pair of sets in a matroid. If $w \in \cl(U) \cap \cl(V)$, then $w \in \cl(U \cap V)$. 
\end{lemma}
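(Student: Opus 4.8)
The plan is to derive everything from the submodularity of the rank function $r$ together with the two closure hypotheses, recalling that $w \in \cl(S)$ is equivalent to $r(S \cup w) = r(S)$. The key idea is to feed submodularity not the sets $U$ and $V$ themselves, but the augmented sets $U \cup w$ and $V \cup w$; this is precisely what lets the argument run uniformly, with no case analysis on whether $w$ happens to lie in $U$ or in $V$.

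First I would record the immediate consequences of the hypotheses. Since $w \in \cl(U)$ and $w \in \cl(V)$, we have $r(U \cup w) = r(U)$ and $r(V \cup w) = r(V)$. Moreover, monotonicity of the closure operator gives $w \in \cl(U) \subseteq \cl(U \cup V)$, so $r(U \cup V \cup w) = r(U \cup V)$.

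Next I would apply submodularity to $U \cup w$ and $V \cup w$:
\[
r(U \cup w) + r(V \cup w) \geq r\big((U \cup w) \cup (V \cup w)\big) + r\big((U \cup w) \cap (V \cup w)\big).
\]
The crucial set-theoretic observation is that $(U \cup w) \cap (V \cup w) = (U \cap V) \cup w$ no matter whether $w$ belongs to $U$ or to $V$, while $(U \cup w) \cup (V \cup w) = U \cup V \cup w$. Substituting the rank values recorded above turns this into
\[
r(U) + r(V) \geq r(U \cup V) + r\big((U \cap V) \cup w\big).
\]

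Finally I would invoke the modular-pair hypothesis $r(U) + r(V) = r(U \cup V) + r(U \cap V)$ to cancel $r(U \cup V)$ from both sides, yielding $r(U \cap V) \geq r\big((U \cap V) \cup w\big)$. Since the reverse inequality holds by monotonicity, equality follows, which is exactly the statement $w \in \cl(U \cap V)$. The only genuine subtlety here, and the step I would flag as the main obstacle should one proceed naively, is the choice of sets to submit to submodularity: applying it to $U$ and $V \cup w$ (or to $U \cup w$ and $V$) forces a case distinction and leaves the case $w \notin U \cup V$ untreated, whereas the symmetric augmentation $U \cup w$, $V \cup w$ sidesteps this difficulty altogether.
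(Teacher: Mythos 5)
Your proof is correct: applying submodularity to the symmetrically augmented sets $U \cup w$ and $V \cup w$, noting $(U \cup w) \cap (V \cup w) = (U \cap V) \cup w$, and then cancelling against the modular-pair equality is exactly the straightforward argument the paper omits. No gaps; the case-free handling of $w$'s membership via the symmetric augmentation is the right way to do it.
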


Next we note a basic property of local connectivity \cite[Lemma 2.4]{osw} that will be used frequently in what follows.

\begin{lemma}
\label{PQR}
For subsets $P$, $Q$, and $R$ of a matroid $M$,
$$\sqcap(P \cup Q,R) + \sqcap(P,Q) = \sqcap(P \cup R,Q) + \sqcap(P,R).$$
\end{lemma}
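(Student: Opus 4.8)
The plan is to prove the identity by unwinding the definition of local connectivity and observing that both sides collapse to one and the same symmetric quantity. Recall that for subsets $X$ and $Y$ of the ground set of $M$ the local connectivity is $\sqcap(X,Y) = r(X) + r(Y) - r(X \cup Y)$. Since the asserted equation involves nothing beyond ranks of unions of $P$, $Q$, and $R$, I expect no genuine obstacle here: the entire argument is a bookkeeping exercise in which the "mixed" rank terms cancel in pairs.

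First I would rewrite the left-hand side by applying the definition twice:
\begin{align*}
\sqcap(P \cup Q,R) + \sqcap(P,Q)
&= \bigl[r(P \cup Q) + r(R) - r(P \cup Q \cup R)\bigr] + \bigl[r(P) + r(Q) - r(P \cup Q)\bigr] \\
&= r(P) + r(Q) + r(R) - r(P \cup Q \cup R),
\end{align*}
where the two occurrences of $r(P \cup Q)$ cancel. I would then treat the right-hand side in exactly the same way, now with the roles of $Q$ and $R$ interchanged:
\begin{align*}
\sqcap(P \cup R,Q) + \sqcap(P,R)
&= \bigl[r(P \cup R) + r(Q) - r(P \cup Q \cup R)\bigr] + \bigl[r(P) + r(R) - r(P \cup R)\bigr] \\
&= r(P) + r(Q) + r(R) - r(P \cup Q \cup R),
\end{align*}
this time the $r(P \cup R)$ terms cancelling. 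Since both sides reduce to the expression $r(P) + r(Q) + r(R) - r(P \cup Q \cup R)$, they are equal, which is the claim.

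The one point worth highlighting is that the common value $r(P) + r(Q) + r(R) - r(P \cup Q \cup R)$ is manifestly symmetric in $P$, $Q$, and $R$; this symmetry is the conceptual reason the identity holds and explains why one may freely interchange which of the three sets is split off in forming the two local connectivities on each side. No deeper structural property of matroids (such as submodularity of the rank function) is invoked — only the additive form of the defining expression for $\sqcap$ — so there is no hard step to single out. If anything, the sole thing to be careful about is keeping the cancellations straight, and writing the common symmetric form makes that transparent.
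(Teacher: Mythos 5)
Your proof is correct: both sides do collapse to the symmetric quantity $r(P)+r(Q)+r(R)-r(P\cup Q\cup R)$, and this direct expansion of the definition $\sqcap(X,Y)=r(X)+r(Y)-r(X\cup Y)$ is exactly the argument behind the result, which the paper itself does not prove but cites from Oxley, Semple, and Whittle. Your closing observation that the common value is symmetric in $P$, $Q$, $R$ correctly identifies why the identity holds and why no submodularity is needed.
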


Elements $e$ and $f$ in a matroid $M$ are {\it clones} if the map that interchanges $e$ and $f$ while fixing every other element is an automorphism of $M$. Elements $g$ and $h$ of $M$ are {\it independent  clones} if they are clones and $\{g,h\}$ is independent in $M$.  An element $z$ of $M$ is {\it fixed} in $M$ if there is no extension $M'$ of $M$ by an element $z'$ such that $z$ and $z'$ are independent clones of $M'$. The next result  follows from 
Theorem 6.1 and Lemma 6.3 of Geelen, Gerards, and Whittle \cite{ggw} (see also \cite{bdb}).

\begin{lemma}
\label{exterminate}
Let $(A,B)$ be an exact $3$-separation of a matroid $M$.   Then there is a unique extension  $M'$  of $M$ by an element $x'$ such that   $x' \in \cl_{M'}(A) \cap \cl_{M'}(B)$ and $x'$ is not fixed in $M'$.
\end{lemma}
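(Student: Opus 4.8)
The plan is to work in the language of single-element extensions and their associated modular cuts, which is the setting underlying \cite{ggw}. Recall that the single-element extensions of $M$ correspond bijectively to the modular cuts of $M$: for the extension $M'$ that adds $x'$ via a modular cut $\cM$, a subset $S$ of $E(M)$ satisfies $x' \in \cl_{M'}(S)$ if and only if $\cl_M(S) \in \cM$. Thus the requirement $x' \in \cl_{M'}(A) \cap \cl_{M'}(B)$ is exactly the condition that both $\cl(A)$ and $\cl(B)$ lie in $\cM$. Since $(A,B)$ is an exact $3$-separation, $\sqcap(A,B) = r(A) + r(B) - r(M) = 2$. Note that $\{\cl(A),\cl(B)\}$ need not be a modular pair, so $\cl(A) \cap \cl(B)$ may have rank less than $2$; in general one must \emph{add} the missing points of the guts rather than find them already present.

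For existence, I would let $\cM^*$ be the smallest modular cut of $M$ containing both $\cl(A)$ and $\cl(B)$; this is well defined because an arbitrary intersection of modular cuts is again a modular cut. The extension $M^*$ determined by $\cM^*$ then has $x' \in \cl_{M^*}(A) \cap \cl_{M^*}(B)$. It remains to check that $x'$ is not fixed in $M^*$. Here I would invoke Lemma 6.3 of \cite{ggw}: because $\sqcap(A,B) = 2$, the element $x'$ is placed freely on the rank-$2$ guts line of $(A,B)$ in $M^*$, so a further point $y$ can be added freely to that line to give an extension of $M^*$ in which $x'$ and $y$ are independent clones. Hence $x'$ is not fixed, and $M^*$ is an extension of the required kind.

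For uniqueness, suppose $M'$ is any extension of $M$ by a non-fixed element $x'$ with $x' \in \cl_{M'}(A) \cap \cl_{M'}(B)$, and let $\cM'$ be its modular cut; then $\cM' \supseteq \cM^*$. Since $x'$ is not fixed, there is an extension $M''$ of $M'$ by a clone $y$ of $x'$ with $\{x',y\}$ independent. Applying the clone automorphism, which fixes $A$ and $B$ pointwise, gives $y \in \cl_{M''}(A) \cap \cl_{M''}(B)$, so the flat $L = \cl_{M''}(\{x',y\})$ has rank $2$ and lies in $\cl_{M''}(A) \cap \cl_{M''}(B)$; as $\sqcap(A,B) = 2$ bounds the rank of this intersection by $2$, the flat $L$ is exactly the guts line of $(A,B)$ in $M''$. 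For a flat $F$ of $M$, clone symmetry forces $x' \in \cl_{M''}(F)$ to imply $y \in \cl_{M''}(F)$, and hence $L \subseteq \cl_{M''}(F)$; conversely $L \subseteq \cl_{M''}(F)$ gives $x' \in \cl_{M''}(F)$. Therefore $F \in \cM'$ if and only if $\cl_{M''}(F) \supseteq L$, so $\cM'$ is determined by $L$ alone. The crux is then to see that $L$ is itself forced by $M$ — that the ``direction'' in which $x'$ is free cannot depend on the choice of extension; this rigidity is precisely the content of Theorem 6.1 of \cite{ggw}, and it is the step I expect to be the main obstacle. Granting it, $L$, and hence $\cM'$, coincide with the data attached to $\cM^*$, so $\cM' = \cM^*$ and $M' = M^*$, which gives both the uniqueness and the identification of the extension.
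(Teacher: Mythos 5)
Your existence step follows the paper's skeleton (the minimal modular cut $\cM^*$ containing $\cl(A)$ and $\cl(B)$, which by \cite[Lemma 6.3]{ggw} is exactly the set of flats $F$ for which $A-F$ is a separator of $M/F$), and your reduction of uniqueness is also sound as far as it goes: for a competing extension $M'$ by a non-fixed $x'$ in the guts, with modular cut $\cM'$, cloning once and using the swap automorphism does show that $\cM'$ consists precisely of the flats of $M$ that span the line $L$ in the clone extension $M''$. The genuine gap is what you do next: you declare that the rigidity of $L$ is ``precisely the content of Theorem 6.1'' of \cite{ggw} and proceed by granting it. That granted claim is not an auxiliary fact. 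Since $L$ exists only inside the chosen extension $M''$, the assertion ``$L$ is forced by $M$'' is literally the assertion ``$\cM'$ does not depend on the choice of $M'$'', which is the uniqueness statement itself; so your argument is circular modulo an attribution you have not verified (and, as the paper notes, uniqueness is only ``essentially implicit'' in the literature, which is exactly why it supplies a self-contained proof). The missing idea is to compare $\cM'$ with the \emph{explicit} description of $\cM^*$ rather than with $L$: given a flat $F \in \cM'$, clone $x'$ twice (the paper's $x''$ and $x'''$). Clone symmetry places all the clones in $\cl(F)$ as well as in the guts of $(A,B)$, and independent clones give this set rank $2$; hence $\cl(F)$ contains a rank-$2$ subset of the guts. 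Contracting $F$ therefore destroys all connectivity between $A-F$ and $B-F$, that is, $A - F$ is a separator of $M/F$, so $F \in \cM^*$. This yields $\cM' \subseteq \cM^*$, and minimality of $\cM^*$ gives the reverse inclusion, so $\cM' = \cM^*$ and $M' = M^*$.

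There is a second, smaller gap on the existence side. Lemma 6.3 of \cite{ggw} supplies only the modular-cut characterization; it does not say that the element added via $\cM^*$ is ``placed freely on the guts line'', and the non-fixedness of $x'$ is exactly what has to be proved at that point. The paper establishes it by a further extension: since $(A, B \cup x')$ is an exact $3$-separation of $M^*$, one may extend $M^*$ by $x''$ using the corresponding minimal modular cut of $M^*$, and then verify, first, that $\{x',x''\}$ is independent (a rank count showing that every flat in that modular cut has rank at least $2$, so $\cl_{M^*}(\{x'\})$ is not in it) and, second, that $x'$ and $x''$ are clones (via the cyclic-flat criterion of \cite{govw}). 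Your one-sentence appeal to Lemma 6.3 asserts this conclusion rather than proving it.
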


The statement of the last lemma matches that of \cite[Lemma 7.9]{gw} except that the former adds the requirement that the extension $M'$ be unique. Although uniqueness is essentially implicit in the latter, we include the proof here for completeness.

\begin{proof}[Proof of Lemma~\ref{exterminate}.] 
It is proved in \cite[Lemma 6.3]{ggw} that the set ${\mathcal F}$ of flats $F$ of $M$ such that $A-F$ is a separator of $M/F$ is a modular cut of $M$ and, moreover  \cite[(6.3.1)]{ggw}, that ${\mathcal F}$ is the unique minimal modular cut of $M$ containing $\{\cl(A),\cl(B)\}$. Corresponding to ${\mathcal F}$, there is an extension $M'$ of $M$ by the element $x'$, and $x' \in \cl_{M'}(A) \cap \cl_{M'}(B)$. Thus $(A,B\cup x')$ is an exact $3$-separation of $M'$. Hence $M'$ has an extension $M''$ by an element $x''$ for which the corresponding modular cut  ${\mathcal F}'$ consists of the flats $F'$ of $M'$ such that $\lambda_{M'/F'}(A - F') = 0$, that is, such that $A- F'$ is a separator of $M'/F'$. 

We show next \cite[Lemma 2.2.7]{bdb} that 

\begin{sublemma}
\label{ic} 
$x'$ and $x''$ are independent clones in $M''$.
\end{sublemma}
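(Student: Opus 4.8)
The plan is to check the two requirements of the sublemma separately, using the rank formula for a single-element extension along a modular cut. Recall that elements $e$ and $f$ of a matroid $N$ are clones exactly when $r_N(X \cup e) = r_N(X \cup f)$ for every $X \subseteq E(N) - \{e,f\}$. Here $E(M'') - \{x',x''\} = E(M)$, so it suffices to prove $r_{M''}(X \cup x') = r_{M''}(X \cup x'')$ for all $X \subseteq E(M)$. Since $x'$ is added to $M$ along the modular cut $\mathcal{F}$ and $x''$ is added to $M'$ along $\mathcal{F}'$, the extension formula gives, for such $X$,
\[
r_{M''}(X \cup x') = r_M(X) + [\cl_M(X) \notin \mathcal{F}] \qquad\text{and}\qquad r_{M''}(X \cup x'') = r_M(X) + [\cl_{M'}(X) \notin \mathcal{F}'],
\]
where $[\,\cdot\,]$ is $1$ if the statement holds and $0$ otherwise, and where we use that deletion recovers the ranks of $M$. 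Since $\cl_{M'}(X) = \cl_{M'}(\cl_M(X))$ and every flat of $M$ arises as $\cl_M(X)$, the clone condition is therefore equivalent to the assertion that, for every flat $F$ of $M$, we have $F \in \mathcal{F}$ if and only if $\cl_{M'}(F) \in \mathcal{F}'$.

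To prove this equivalence I would write $F' = \cl_{M'}(F)$ and split into cases according to whether $x' \in F'$; because $x'$ is placed along $\mathcal{F}$, this happens exactly when $F \in \mathcal{F}$. If $F \in \mathcal{F}$, then $F' = F \cup x'$ and $x'$ is a loop of $M'/F$, so $M'/F' = M/F$ and $A - F' = A - F$; hence $A - F'$ separates $M'/F'$ if and only if $A - F$ separates $M/F$, giving $F' \in \mathcal{F}'$. If $F \notin \mathcal{F}$, then $F' = F$ and $M'/F = (M/F) + x'$, and the key observation is that $x' \in \cl_{M'}(A) \cap \cl_{M'}(B)$ forces $x' \in \cl_{M'/F}(A - F)$ and $x' \in \cl_{M'/F}(B - F)$, so that reinserting $x'$ alters none of the three ranks entering $\lambda_{M'/F}(A - F)$. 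A short computation then yields $\lambda_{M'/F}(A - F) = \lambda_{M/F}(A - F) > 0$, whence $F' \notin \mathcal{F}'$. I expect this connectivity computation — showing that reinstating $x'$ leaves $\lambda(A - F)$ unchanged — to be the main step, since it is exactly where the special position of $x'$ in the guts of $(A,B)$ is used.

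For independence, set $L = \cl_{M'}(\{x'\})$. Since $(A,B)$ is exact, $\lambda_M(A) = 2 \neq 0$, so $x'$ is not a loop and $r_{M'}(L) = 1$; moreover $x''$ is parallel to $x'$ in $M''$ precisely when $L \in \mathcal{F}'$. I would rule this out by computing $\lambda_{M'/L}(A - L)$ directly: using $L \subseteq \cl_{M'}(A) \cap \cl_{M'}(B)$ one obtains $r_{M'/L}(A - L) = r_M(A) - 1$, $r_{M'/L}((B \cup x') - L) = r_M(B) - 1$, and $r(M'/L) = r(M) - 1$, so that $\lambda_{M'/L}(A - L) = \lambda_M(A) - 1 = 1 \neq 0$. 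Hence $L \notin \mathcal{F}'$, so $x''$ is neither a loop nor parallel to $x'$, and therefore $r_{M''}(\{x',x''\}) = 2$; that is, $x'$ and $x''$ are independent clones in $M''$.
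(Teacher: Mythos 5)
Your proof is correct, but it takes a genuinely different route from the paper on the clone half of the statement. The paper disposes of cloning by citing an external criterion (Proposition 4.9 of Geelen--Oxley--Vertigan--Whittle): $x'$ and $x''$ are clones provided every cyclic flat of $M''$ contains both or neither, and it omits the verification. You instead work directly from the rank characterization of clones together with the single-element-extension rank formula, which reduces the claim to the flat correspondence $F \in \mathcal{F} \Leftrightarrow \cl_{M'}(F) \in \mathcal{F}'$; your two-case check of this correspondence is sound: when $F \in \mathcal{F}$ the element $x'$ becomes a loop of $M'/F$, so $M'/(F \cup x') = M/F$ and membership in $\mathcal{F}'$ is inherited, while when $F \notin \mathcal{F}$ the fact that $x'$ lies in $\cl_{M'/F}(A-F) \cap \cl_{M'/F}(B-F)$ gives $\lambda_{M'/F}(A-F) = \lambda_{M/F}(A-F) > 0$, so $F \notin \mathcal{F}'$. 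This buys self-containedness (no appeal to the cyclic-flat criterion, whose verification the paper skips anyway) at the cost of a longer computation. On independence, the two arguments are close in spirit but not identical: the paper proves the general bound $r(F') \ge 2$ for every $F' \in \mathcal{F}'$ and concludes that the rank-one flat $\cl_{M'}(\{x'\})$ cannot lie in $\mathcal{F}'$, whereas you compute $\lambda_{M'/L}(A-L) = \lambda_M(A) - 1 = 1 \neq 0$ for the specific flat $L = \cl_{M'}(\{x'\})$; both are valid, and the paper's version is marginally more economical since it needs no contraction computation.
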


If $F' \in {\mathcal F}'$, then $r(A \cup F') + r(B \cup x' \cup F') = r(M') + r(F')$. As 
$r(A) + r(B \cup x') = r(M) + 2$, we deduce that $r(F') \ge 2$, so $\cl_{M'}(\{x'\}) \not\in {\mathcal F}'$. Hence $\{x',x''\}$ is independent. 

To show that $x'$ and $x''$ are clones, it suffices by \cite[Proposition 4.9]{govw} to show that a cyclic flat $X$ of $M''$ contains $x'$ if and only if it contains $x''$.  We omit the straightforward details of this check. 


By \ref{ic}, $x'$ is not fixed in $M'$. To establish the uniqueness of the extension $M'$, let $N'$ be an extension of $M$ by a non-fixed element $x'$ that lies in $\cl_{N'}(A) \cap \cl_{N'}(B)$. Then we can independently clone $x'$ by $x''$ and again by $x'''$ to get $N'''$. In the last matroid, let $X$ be a flat 
containing $x'$ as a non-coloop. Then $r(X \cup \{x'',x'''\}) = r(X - x')$ in $N'''$. 
As $r(A \cup \{x',x'',x'''\}) + r(B \cup \{x',x'',x'''\}) = r(M) + 2,$ it follows that $N'''/(X \cup \{x'',x'''\})$ has $A - (X-x')$ as a separator. Since $N'''/(X \cup \{x'',x'''\})$ has $x',x'',$ and $x'''$ as loops, we deduce that $N'''/(X-x')\ba  \{x',x'',x'''\})$, which equals $M/(X-x')$,  has $A - (X-x')$ as a separator. Hence $X - x' \in 
{\mathcal F}$. Because ${\mathcal F}$ is the unique minimal modular cut of $M$ containing $\{\cl(A),\cl(B)\}$, it follows that $N' = M'$, so $M'$ is unique. 
\end{proof} 

The following is an immediate consequence of the last lemma. 

\begin{corollary}
\label{exterminate2}
Let $(A,B)$ be an exact $3$-separation of a matroid $M$.   Then there is a unique extension  $M''$  of $M$ by a pair of independent clones $x$ and $y$ such that   $\{x,y\} \subseteq \cl_{M''}(A) \cap \cl_{M''}(B)$. 
\end{corollary}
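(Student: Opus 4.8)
The plan is to derive Corollary~\ref{exterminate2} directly from Lemma~\ref{exterminate} by applying the lemma twice in succession. First I would invoke Lemma~\ref{exterminate} to the exact $3$-separation $(A,B)$ of $M$, obtaining the unique extension $M'$ by a non-fixed element $x$ with $x \in \cl_{M'}(A) \cap \cl_{M'}(B)$. Since $x$ lies in the closure of both sides, $(A, B \cup x)$ is an exact $3$-separation of $M'$ (the local connectivity across the separation is preserved because $x$ is added to the guts). This sets up a second application of the lemma.

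Next I would observe that because $x$ is not fixed in $M'$, there is an extension of $M'$ by an element $y$ making $x$ and $y$ independent clones. The content of the proof of \ref{ic} in the preceding lemma already shows precisely this: the modular-cut construction yields a clonal extension $M''$ in which $\{x,y\}$ is independent and $x,y$ are clones, with both lying in $\cl_{M''}(A) \cap \cl_{M''}(B)$. So the existence half follows immediately from what has been established, essentially by reading off the matroid $M''$ constructed inside the proof of Lemma~\ref{exterminate}.

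The part requiring a little care is uniqueness. Suppose $N''$ is any extension of $M$ by independent clones $x,y$ with $\{x,y\} \subseteq \cl_{N''}(A) \cap \cl_{N''}(B)$. Deleting $y$ gives an extension $N'' \ba y$ of $M$ by the single element $x$, with $x \in \cl(A) \cap \cl(B)$; and since $x,y$ are independent clones in $N''$, the element $x$ is not fixed in $N'' \ba y$. By the uniqueness clause of Lemma~\ref{exterminate}, we must have $N'' \ba y = M'$. It then remains to check that the single-element extension of $M'$ by the clone $y$ is forced: any element cloning $x$ and lying in $\cl(A) \cap \cl(B)$ corresponds to the unique modular cut $\mathcal{F}'$ described in the proof above, so $N'' = M''$.

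The main obstacle I anticipate is the second uniqueness step: showing that once $M'$ is pinned down, the cloning element $y$ is also uniquely determined. The natural argument is that a clone of $x$ must lie in exactly the flats of $M'$ that $x$ lies in, and the requirement $y \in \cl_{N''}(A) \cap \cl_{N''}(B)$ together with $\{x,y\}$ independent forces $y$ to realize the same modular cut that produced the second extension in the proof of Lemma~\ref{exterminate}; invoking \cite[Proposition 4.9]{govw} on cyclic flats, as was done for \ref{ic}, closes the gap. This is routine given the machinery already assembled, which is why the corollary is stated as an immediate consequence.
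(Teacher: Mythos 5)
Your skeleton---existence by reading off the matroid $M''$ constructed inside the proof of Lemma~\ref{exterminate}, and uniqueness by deleting a clone and invoking the uniqueness clause of that lemma---is exactly the derivation the paper intends by ``immediate consequence,'' and your first uniqueness step is sound: $x$ is not fixed in $N'' \ba y$ because $N''$ itself supplies an independent clone of $x$, so $N'' \ba y = M'$. The genuine gap is in your second uniqueness step. Your supporting claim that ``a clone of $x$ must lie in exactly the flats of $M'$ that $x$ lies in'' is false: by \cite[Proposition 4.9]{govw}, independent clones lie in the same \emph{cyclic} flats, and they emphatically do not lie in the same flats, since $y \notin \cl_{N''}(\{x\})$ by independence. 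From the cyclic-flat property one gets only that every flat $F$ of $M'$ with $y \in \cl_{N''}(F)$ contains $x$ (the closure of a circuit through $y$ in $F \cup y$ is a cyclic flat, hence contains $x$); the assertion that the modular cut of $N''$ over $M'$ must equal $\mathcal{F}'$ is precisely what has to be proved, and you restate it rather than prove it. The troublesome case is a flat $F$ with $y \in \cl_{N''}(F)$ that contains $x$ only as a coloop; handling it is the analogue of the rank computation with the clonal triple in the paper's own uniqueness argument for Lemma~\ref{exterminate}, and nothing in your proposal does that work.

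The clean repair, which keeps the corollary genuinely immediate, is to apply the uniqueness clause of Lemma~\ref{exterminate} a second time, now to the exact $3$-separation $(A, B \cup x)$ of $M'$. For that you need $y$ to be non-fixed \emph{in $N''$}; note this does not follow from the definition merely because $y$ has the independent clone $x$ inside $N''$, since fixedness concerns extensions of $N''$ itself. It does follow from the standard fact that a pair of independent clones can always be cloned again---the same fact the paper already relies on when it clones $x'$ ``and again by $x'''$'' in the proof of Lemma~\ref{exterminate}. Granting that, $y$ is a non-fixed element of $N''$ lying in $\cl_{N''}(A) \cap \cl_{N''}(B \cup x)$, and likewise for the second element of $M''$ (by \ref{ic}), so the lemma forces $N'' = M''$. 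Without either this observation or a repetition of the paper's modular-cut argument, your concluding step ``so $N'' = M''$'' is unsupported.
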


In the last result, we shall say that $x$ and $y$ have been {\it freely added  to the guts line} of $(A,B)$. 

\begin{lemma}
\label{unicorn}
Let $(A,B)$ be an exact $3$-separation in a matroid $M$. Assume there is an $A$-strand $A_0$ and a $B$-strand $B_0$ such that $\sqcap(A_0,B_0) = 1$. Let $M''$ be the extension of $M$ obtained by freely adding elements $x$ and $y$ to the guts line of $(A,B)$. Then $M$ has an extension by an element $p$ such that  $A_0 \cup p$ and $B_0 \cup p$ are circuits if and only if $M''$ has an extension by   $p$ such that  $A_0 \cup p$ and $B_0 \cup p$ are circuits. 
\end{lemma}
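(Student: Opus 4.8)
The plan is to prove the two implications separately; one of them is essentially immediate. Throughout, write $M_p$ for an extension $M + p$ of $M$ by $p$. If $A_0 \cup p$ and $B_0 \cup p$ are circuits of $M_p$, then $p \in \cl_{M_p}(A_0) \subseteq \cl_{M_p}(A)$ and $p \in \cl_{M_p}(B_0) \subseteq \cl_{M_p}(B)$, so $p$ lies on the guts line of $(A,B)$ in $M_p$. Consequently $(A, B \cup p)$ is again an exact $3$-separation of $M_p$, one whose guts line contains $p$ and otherwise agrees with that of $(A,B)$; this is the separation I will clone across.

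For the backward implication, suppose $N$ is an extension of $M''$ by $p$ in which $A_0 \cup p$ and $B_0 \cup p$ are circuits. Since both of these sets avoid $x$ and $y$, they remain circuits of $N \setminus \{x,y\}$, and the latter is an extension of $M'' \setminus \{x,y\} = M$ by the single element $p$. This produces the required extension of $M$, so this direction needs no further work.

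For the forward implication, start from $M_p$ as above and freely add $x$ and $y$ to the guts line of the exact $3$-separation $(A, B \cup p)$ of $M_p$, invoking Corollary~\ref{exterminate2}; call the result $N$. Because $A_0 \cup p$ and $B_0 \cup p$ are circuits of $M_p = N \setminus \{x,y\}$ avoiding $x$ and $y$, they remain circuits of $N$. Thus $N$ will be the desired extension of $M''$ by $p$ as soon as we show $N \setminus p = M''$.

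The crux is therefore establishing $N \setminus p = M''$, i.e.\ that freely adding the guts clones and deleting $p$ commute, and I expect this to be the main obstacle. The plan is to check that $x$ and $y$ are independent clones of $N \setminus p$ lying in $\cl_{N \setminus p}(A) \cap \cl_{N \setminus p}(B)$, and then to apply the uniqueness asserted in Corollary~\ref{exterminate2}, this time to $M$ and $(A,B)$, to conclude that $N \setminus p$ coincides with $M''$. The three things to verify are routine facts about single-element deletion: the automorphism of $N$ interchanging $x$ and $y$ fixes $p$ and so restricts to an automorphism of $N \setminus p$; the independence of $\{x,y\}$ is unaffected by deleting $p$; and since $\cl_{N \setminus p}(S) = \cl_N(S) \setminus p$ for every $S \subseteq E(N) \setminus p$, the containments $\{x,y\} \subseteq \cl_N(A) \cap \cl_N(B)$ pass to $N \setminus p$. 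Once this uniqueness is in hand the proof closes immediately.
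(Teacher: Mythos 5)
Your proposal is correct and follows essentially the same route as the paper: the easy direction by deleting $\{x,y\}$, and the hard direction by freely adding $x$ and $y$ to the guts line of $(A, B\cup p)$ in $M_p$ and then invoking the uniqueness in Corollary~\ref{exterminate2} to identify $N \setminus p$ with $M''$. Your extra verifications (the swap automorphism fixes $p$, closures and independence pass to the deletion, and $\cl_N(B\cup p)=\cl_N(B)$ since $p\in\cl_N(B_0)$) are exactly the details the paper leaves implicit.
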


\begin{proof}
Clearly if $M''$ has such an extension, then so does $M$. Conversely, assume that $M$ has an extension $M_p$ by $p$ in which both $A_0 \cup p$ and $B_0 \cup p$ are circuits. As $(A,B\cup p)$ is an exact $3$-separation of $M_p$, we can freely add elements $x$ and $y$ to the guts line of 
$(A,B\cup p)$ in $M_p$ to get $M''_p$. Then, in $M''_p\ba p$, the elements $x$ and $y$ are independent clones that are contained in  
$\cl_{M_p''\ba p}(A) \cap \cl_{M_p''\ba p}(B)$. By Corollary~\ref{exterminate2}, $M''_p \ba p = M''$, so $M''$ has the desired extension. 
\end{proof}

Let $(A,B)$ be an exact $3$-separation in a matroid $M$. Form a bipartite graph $G$ with vertex classes consisting of the set of $A$-strands and the set of $B$-strands. An $A$-strand $A'$ and a $B$-strand $B'$ are adjacent in $G$ if $\sqcap(A',B') = 1$. A {\it bunch} of strands is the vertex set of some component  of $G$ that has at least one edge. We call a bunch of strands {\it complete} if the associated bipartite graph is complete. Clearly a bunch of strands that contains a single $A$-strand or a single $B$-strand is complete.  We call $G$ the {\it strand graph} of $(M,A,B)$. The following  is elementary. In this  and the two subsequent lemmas, $(A,B)$ is an exact $3$-separation in a matroid $M$. 

\begin{lemma}
\label{complete} 
Let  $Z$ be a bunch of strands that contains at least two $A$-strands and at least two $B$-strands. Then $Z$ is complete if and only if, whenever 
$Z$ contains $A$-strands $A'$ and $A''$ and $B$-strands $B'$ and $B''$ such that at least three of $\sqcap(A',B'), \sqcap(A',B''), \sqcap(A'',B')$, and $\sqcap(A'',B'')$ are one, all four are one.
\end{lemma}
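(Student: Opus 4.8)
The statement is really about the bipartite strand graph $G$ and makes no further use of the matroid structure, since for an $A$-strand $A'$ and a $B$-strand $B'$ we have $\sqcap(A',B') \le \sqcap(A',B) = 1$, so $\sqcap(A',B') \in \{0,1\}$. Thus the plan is to argue entirely within $G$, restricted to the component with vertex set $Z$, treating the edge $A'B'$ as the relation $\sqcap(A',B') = 1$. The forward implication is immediate: if $Z$ is complete, then every $A$-strand of $Z$ is adjacent to every $B$-strand of $Z$, so all four of $\sqcap(A',B')$, $\sqcap(A',B'')$, $\sqcap(A'',B')$, and $\sqcap(A'',B'')$ equal one whenever these strands lie in $Z$, and the condition holds vacuously. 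All of the content is in the converse.

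For the converse I would assume the three-forces-four condition and first establish the key claim: \emph{any two distinct $A$-strands of $Z$ that have a common $B$-neighbor have identical $B$-neighborhoods.} To prove it, suppose $A'$ and $A''$ are distinct $A$-strands of $Z$ with $\sqcap(A',B') = \sqcap(A'',B') = 1$ for some $B$-strand $B'$, and suppose for a contradiction that their neighborhoods differ; without loss of generality there is a $B$-strand $B''$ with $\sqcap(A',B'') = 1$ but $\sqcap(A'',B'') = 0$. Note that $B'' \neq B'$. Then exactly three of the four connectivities $\sqcap(A',B')$, $\sqcap(A',B'')$, $\sqcap(A'',B')$, $\sqcap(A'',B'')$ are one, so the hypothesis forces $\sqcap(A'',B'') = 1$, a contradiction. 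Hence the two neighborhoods coincide, and by the symmetry of the argument the claim holds.

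The proof then finishes by a short connectivity argument. Fix an $A$-strand $A_0$ of $Z$ and let $S$ be its set of $B$-neighbors, which is nonempty since $Z$ is connected with at least four vertices. By the key claim, every $A$-strand adjacent to a member of $S$ shares that member with $A_0$ as a common neighbor and hence has neighborhood exactly $S$; dually, each such $A$-strand is adjacent only to members of $S$. Consequently the set consisting of these $A$-strands together with $S$ is closed under taking neighbors in $G$, so it is a union of components of $G$; in fact it induces a complete bipartite graph and is therefore a single component. Since $Z$ is by definition the vertex set of one component, this set must be all of $Z$, forcing $S$ to be the full collection of $B$-strands of $Z$ and the $A$-strands with neighborhood $S$ to be all of the $A$-strands of $Z$. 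Hence every $A$-strand of $Z$ is adjacent to every $B$-strand of $Z$, so $Z$ is complete.

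I do not expect a genuine obstacle, as the author's description of the lemma as elementary suggests. The only point requiring care is the bookkeeping in the key claim: keeping the roles of $A'$ versus $A''$ and $B'$ versus $B''$ straight, and verifying that the four strands invoked are the distinct objects the hypothesis refers to (in particular that $B' \neq B''$, which holds because $B'$ is adjacent to $A''$ while $B''$ is not). The assumptions that $Z$ contains at least two $A$-strands and at least two $B$-strands are exactly what make the three-forces-four condition meaningful, and they play no further role.
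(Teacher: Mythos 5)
Your proof is correct. The paper gives no proof of this lemma at all (it is dismissed as ``elementary''), so there is no argument to compare against; your two steps --- the key claim that distinct $A$-strands of $Z$ with a common neighbour have identical neighbourhoods (via the three-forces-four hypothesis, with $B' \neq B''$ justified exactly as you say), and the observation that the set of $A$-strands meeting $N(A_0)$ together with $N(A_0)$ is closed under taking neighbours, induces a complete bipartite graph, and hence is the whole component $Z$ --- are a sound and natural way to fill in the omitted details.
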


We omit the straightforward proof of the next result.

\begin{lemma}
\label{equiv}
Let $A'$ be an $A$-strand and $B'$ be a $B$-strand. Then $\sqcap(A',B') = 1$ if and only if $A' \cup B'$ is a circuit of $M$.
\end{lemma}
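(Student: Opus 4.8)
The plan is to prove both implications, with the forward direction being routine and the converse relying essentially on the minimality built into the definition of a strand together with the monotonicity of local connectivity. For the forward direction, suppose $A' \cup B'$ is a circuit. Since $A'$ is an $A$-strand, $\sqcap(A',B) = 1 > 0$, so $A'$ is nonempty; similarly $B'$ is nonempty. Thus $A'$ and $B'$ are nonempty proper subsets of the circuit $A' \cup B'$ and so are each independent. As $A' \subseteq A$ and $B' \subseteq B$ are disjoint, $r(A') + r(B') - r(A' \cup B') = |A'| + |B'| - (|A' \cup B'| - 1) = 1$, giving $\sqcap(A',B') = 1$.

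For the converse, assume $\sqcap(A',B') = 1$. I would first check that $A'$ is independent. If it were not, some $a \in A'$ would lie in $\cl(A' - a)$, whence $\cl((A' - a) \cup B) = \cl(A' \cup B)$ and therefore $\sqcap(A' - a, B) = \sqcap(A', B) = 1$, contradicting the minimality of the $A$-strand $A'$. The symmetric argument shows $B'$ is independent. Then $\sqcap(A',B') = 1$ rewrites as $r(A' \cup B') = |A'| + |B'| - 1 = |A' \cup B'| - 1$, so $A' \cup B'$ has nullity one and hence contains a unique circuit $C$. Since $A'$ and $B'$ are independent, $C$ is contained in neither of them, so $C \cap A'$ and $C \cap B'$ are both nonempty.

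The crux is to show $C = A' \cup B'$, that is, to rule out elements of $A' \cup B'$ lying outside the unique circuit. Write $C_A = C \cap A'$ and $C_B = C \cap B'$. Because $C = C_A \cup C_B$ is a circuit with both parts nonempty, the forward computation applied to $C$ gives $\sqcap(C_A, C_B) = 1$. Monotonicity of local connectivity in each argument then yields the sandwich
$$1 = \sqcap(C_A, C_B) \le \sqcap(C_A, B) \le \sqcap(A', B) = 1,$$
so $\sqcap(C_A, B) = 1$; were $C_A$ a proper subset of $A'$, this would contradict the minimality of $A'$, forcing $C_A = A'$. The symmetric argument forces $C_B = B'$, so $C = A' \cup B'$ and the set is a circuit.

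I expect the final step to be the main obstacle: a priori a nullity-one set could be a circuit augmented by some coloops, and it is exactly the combination of strand minimality with the monotonicity sandwich that excludes such extra elements. The only standard fact I will invoke without proof is that a set of nullity one contains precisely one circuit, namely the fundamental circuit of its unique dependent element with respect to any basis of the set.
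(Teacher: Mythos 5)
Your proof is correct. The paper itself omits the proof of this lemma (it is declared straightforward), so there is no argument to compare against; your write-up is a complete and valid version of exactly the routine argument intended, and every auxiliary fact you invoke (monotonicity of $\sqcap$ in each argument, independence of strands forced by minimality, and the fact that a nullity-one set contains a unique circuit) is standard and is used freely elsewhere in the paper, e.g.\ the monotonicity sandwich $1 = \sqcap(A',B') \le \sqcap(A' \cup A'',B') \le \sqcap(A,B') = 1$ appears verbatim in the paper's proof of Lemma~\ref{equiv2}, which alternatively suggests a $2$-sum decomposition route to the same conclusion.
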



\begin{lemma}
\label{equiv2}
For $A' \subseteq A$ and $B' \subseteq B$, if    $A' \cup B'$ is a circuit of $M$ and $\sqcap(A,B') = 1$, then $B'$ is a $B$-strand.
\end{lemma}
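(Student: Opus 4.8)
The plan is to verify the two defining properties of a $B$-strand for $B'$: that $\sqcap(B',A) = 1$ and that $B'$ is minimal with this property. The first is immediate, since $\sqcap$ is symmetric, so $\sqcap(B',A) = \sqcap(A,B') = 1$. Before addressing minimality I would dispose of the degenerate cases. If $B' = \emptyset$ then $\sqcap(A,B') = 0 \neq 1$, so $B' \neq \emptyset$; and I would work in the case where $A'$ is also nonempty, so that the circuit $C := A' \cup B'$ genuinely crosses $(A,B)$. Then $A'$ and $B'$ are each \emph{proper} subsets of the circuit $C$ and hence independent.

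For minimality I would argue by contradiction: suppose some proper subset $B_1 \subsetneq B'$ satisfies $\sqcap(B_1,A) = 1$ (such a $B_1$ is automatically nonempty, since $\sqcap(\emptyset,A)=0$), and set $B_2 := B' \setminus B_1 \neq \emptyset$. Applying Lemma~\ref{PQR} with $P = B_1$, $Q = B_2$, and $R = A$ gives
\[
\sqcap(B',A) + \sqcap(B_1,B_2) = \sqcap(A \cup B_1,B_2) + \sqcap(B_1,A).
\]
Since $\sqcap(B',A) = 1 = \sqcap(B_1,A)$, this collapses to $\sqcap(A \cup B_1,B_2) = \sqcap(B_1,B_2)$. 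Because $B_1$ and $B_2$ are disjoint subsets of the independent set $B'$, we have $\sqcap(B_1,B_2) = |B_1| + |B_2| - |B'| = 0$, and therefore $\sqcap(A \cup B_1, B_2) = 0$; equivalently, $r(A \cup B') = r(A \cup B_1) + r(B_2)$.

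To produce the contradiction I would exploit that $C$ is a circuit. Choosing any $e \in B_2$, we have $e \in \cl(C \setminus e) = \cl(A' \cup (B' \setminus e)) \subseteq \cl(A \cup (B' \setminus e))$, so adjoining $e$ does not raise the rank: $r(A \cup B') = r(A \cup B_1 \cup (B_2 \setminus e))$. Adding the elements of $B_2 \setminus e$ one at a time then gives $r(A \cup B') \le r(A \cup B_1) + |B_2| - 1 = r(A \cup B_1) + r(B_2) - 1$, which is strictly less than $r(A \cup B_1) + r(B_2)$ and so contradicts the displayed equality. Hence no such $B_1$ exists and $B'$ is a $B$-strand. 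The main obstacle is the middle step: recognizing that the right splitting $B' = B_1 \cup B_2$ together with Lemma~\ref{PQR} converts the hypothetical smaller strand into the skewness statement $\sqcap(A \cup B_1,B_2) = 0$, which is exactly what the circuit-closure relation forbids. I would also keep careful track of the nonemptiness and independence hypotheses, since allowing $A' = \emptyset$ would make $B'$ itself a circuit, change $\sqcap(B_1,B_2)$ to $1$, and dissolve the contradiction; thus the assumption that the circuit meets $A$ is genuinely used.
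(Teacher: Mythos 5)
Your proof is correct, but it takes a genuinely different route from the paper's. The paper argues structurally: it picks a $B$-strand $B'' \subseteq B'$, pairs it with a minimal $A'' \subseteq A$ having $\sqcap(A'',B'')=1$ (so that $A'' \cup B''$ is a circuit), shows $\sqcap(A' \cup A'', B') = 1$, and then decomposes $M|(A' \cup A'' \cup B')$ as a $2$-sum with basepoint $q$; in the part with ground set $B' \cup q$, both $B' \cup q$ and $B'' \cup q$ are circuits, and since one circuit cannot properly contain another, $B' = B''$. You instead run a direct connectivity computation: assuming a proper subset $B_1 \subsetneq B'$ with $\sqcap(B_1,A)=1$, Lemma~\ref{PQR} plus the independence of $B'$ (as a proper subset of the circuit) gives the skewness $\sqcap(A \cup B_1, B_2) = 0$ for $B_2 = B' \setminus B_1$, which the circuit-closure relation $e \in \cl(C \setminus e)$ immediately contradicts. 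Your version is more elementary and self-contained: it avoids the $2$-sum machinery and the step the paper leaves to the reader (``one easily checks that $A'' \cup B''$ is a circuit''), at the cost of a slightly longer rank calculation; the paper's version is shorter once the $2$-sum decomposition of a local-connectivity-one restriction is taken as known, and it exhibits the structure (both circuits pass through the basepoint) rather than just deriving a numerical contradiction. One further point in your favor: both proofs silently require $A' \neq \emptyset$ (the paper's uses $\sqcap(A',B')=1$, which fails for $A'=\emptyset$, and the lemma is genuinely false in that case, e.g.\ a triangle $B'$ with one point in $\cl(A)$), and you are the only one to state this restriction and explain where it is used.
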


\begin{proof}
Clearly $B'$ contains a $B$-strand $B''$. Let $A''$ be a minimal subset of $A$ such that $\sqcap(A'',B'') = 1$. Then  one easily checks that $A'' \cup B''$ is a circuit. 
Since 
$$1 = \sqcap(A',B')\le \sqcap(A' \cup A'', B') \le \sqcap(A, B') = 1,$$
 $\sqcap(A' \cup A'', B') = 1$. Thus $M|(A'\cup A'' \cup B')$ is a $2$-sum, with basepoint $q$ say, of matroids $M_1$ and $M_2$ having ground sets $B' \cup q$ and $A' \cup A''\cup q$. 
Then $M_1$ has $B' \cup q$ and $B''\cup q$ as circuits. Thus $B' = B''$. 
\end{proof}

\begin{lemma}
\label{subeq}
Let $X_1$, $X_2$, and $Y$ be sets in a matroid $M$. If $\{X_1,X_2\}$ is a modular pair, then 
$$\sqcap(X_1,Y) + \sqcap(X_2,Y) \le \sqcap(X_1 \cup X_2,Y) + \sqcap(X_1 \cap X_2,Y).$$
\end{lemma}

\begin{proof} By substitution, we see that 
\begin{multline*}
\sqcap(X_1 \cup X_2,Y) + \sqcap(X_1 \cap X_2,Y) - \sqcap(X_1,Y) - \sqcap(X_2,Y)  \\
= [r(X_1 \cup Y) + r(X_2 \cup Y) - r(X_1\cup X_2 \cup Y) - r((X_1\cap X_2) \cup Y)] \\
- [r(X_1) + r(X_2) - r(X_1 \cup X_2) - r(X_1 \cap X_2)].
\end{multline*}
The result follows since $\{X_1,X_2\}$ is modular and $r$ is submodular.
\end{proof}

\begin{corollary}
\label{subeq2}
In a matroid $M$, suppose $X_1$, $X_2$, and $Y$ are sets  and $\{X_1,X_2\}$ is a modular pair. 
If $\sqcap(X_1,Y) =  \sqcap(X_1 \cup X_2,Y)$, then $\sqcap(X_2,Y) = \sqcap(X_1 \cup X_2,Y)$.
\end{corollary}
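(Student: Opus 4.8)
The plan is to obtain the stated equality as a squeeze between two elementary bounds, using only Lemma~\ref{subeq} together with the monotonicity of local connectivity. First I would record that $\sqcap(\cdot,Y)$ is monotone in its first argument: if $P \subseteq P'$, then, writing $S = P'\setminus P$ and using submodularity of $r$ with $P \subseteq P\cup Y$, we get $r(P'\cup Y)-r(P\cup Y)\le r(P')-r(P)$, whence $\sqcap(P,Y)\le \sqcap(P',Y)$. Applying this along the chain $X_1\cap X_2\subseteq X_2\subseteq X_1\cup X_2$ gives
\[
\sqcap(X_1\cap X_2,Y)\ \le\ \sqcap(X_2,Y)\ \le\ \sqcap(X_1\cup X_2,Y),
\]
so that $\sqcap(X_2,Y)$ is trapped between the values at the meet and the join.

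Next I would feed the hypothesis into the modular inequality. Since $\{X_1,X_2\}$ is a modular pair, Lemma~\ref{subeq} gives
\[
\sqcap(X_1,Y)+\sqcap(X_2,Y)\ \le\ \sqcap(X_1\cup X_2,Y)+\sqcap(X_1\cap X_2,Y).
\]
Substituting the hypothesis $\sqcap(X_1,Y)=\sqcap(X_1\cup X_2,Y)$ and cancelling that common term yields $\sqcap(X_2,Y)\le \sqcap(X_1\cap X_2,Y)$. Combined with the lower bound from the monotonicity chain, this forces $\sqcap(X_2,Y)=\sqcap(X_1\cap X_2,Y)$: the hypothesis drives $\sqcap(X_2,Y)$ down onto the \emph{meet}.

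The step I expect to be the real obstacle is reconciling this with the printed conclusion, which names the join $X_1\cup X_2$ rather than the meet $X_1\cap X_2$. The squeeze delivers only the lower endpoint, and promoting it to the upper endpoint would require the extra equality $\sqcap(X_1\cap X_2,Y)=\sqcap(X_1\cup X_2,Y)$, which does not follow from the hypotheses: take $X_1=\{e_1,e_2\}$ and $X_2=\{e_3,e_4\}$ skew, so $\{X_1,X_2\}$ is a modular pair with $X_1\cap X_2=\emptyset$, and let $Y=\{y\}$ with $y$ parallel to $e_1$; then $\sqcap(X_1,Y)=1=\sqcap(X_1\cup X_2,Y)$, yet $\sqcap(X_2,Y)=0=\sqcap(X_1\cap X_2,Y)$. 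Thus the conclusion exactly as printed, with $\sqcap(X_1\cup X_2,Y)$, cannot be proved from the stated hypotheses; the argument shows that the correct conclusion is $\sqcap(X_2,Y)=\sqcap(X_1\cap X_2,Y)$, the natural reading being that once $X_2$ contributes nothing to the link between $X_1$ and $Y$, the link between $X_2$ and $Y$ is already carried by $X_1\cap X_2$.
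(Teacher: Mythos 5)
Your argument is correct, and its positive half is essentially the paper's own proof: cancel the hypothesis $\sqcap(X_1,Y)=\sqcap(X_1\cup X_2,Y)$ in Lemma~\ref{subeq} to get $\sqcap(X_2,Y)\le\sqcap(X_1\cap X_2,Y)$, then squeeze against the monotonicity of $\sqcap(\cdot,Y)$. You are also right that the corollary as printed is misstated. Your counterexample (take $X_1$ and $X_2$ skew, so that $\{X_1,X_2\}$ is a modular pair with $X_1\cap X_2=\emptyset$, and $Y$ a single element parallel to an element of $X_1$) is valid, so no argument could deliver the conclusion with $\sqcap(X_1\cup X_2,Y)$; the union in the conclusion should be an intersection, and the same union-for-intersection slip appears in the printed one-line proof, which cites the monotonicity $\sqcap(X_1\cup X_2,Y)\ge\sqcap(X_2,Y)$ where the inequality actually needed is $\sqcap(X_2,Y)\ge\sqcap(X_1\cap X_2,Y)$. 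Decisive internal confirmation is how the corollary is later used: in the proof of \ref{y'a0} it is applied with $X_1=X'$, $X_2=Y'$, and $Y=B_0$, and the conclusion drawn there is $\sqcap(X'\cap Y',B_0)=\sqcap(Y',B_0)$, which is precisely your corrected statement $\sqcap(X_2,Y)=\sqcap(X_1\cap X_2,Y)$; the conclusion as printed would only restate the already-known fact $\sqcap(Y',B_0)=\sqcap(X'\cup Y',B_0)$ and would be useless at that point. So your proposal both supplies the intended proof and correctly repairs the statement.
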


\begin{proof}
As $\sqcap(X_1 \cup X_2,Y) \ge \sqcap(X_2,Y)$, this is immediate from the last lemma.  
\end{proof}

\section{The Proof of the Main Result}
\label{proofsec}

The purpose of this section is to prove the main theorem.

\begin{proof}[Proof of Theorem~\ref{bigun}.]
Suppose first that $M$ has an extension   by an element $p$ such that $A_0 \cup p$ and $B_0 \cup p$ are circuits. Assume also that  $M$ has an  $A$-strand $A_1$ and $B$-strand $B_1$ distinct from $A_0$ and $B_0$, respectively, such that exactly two of $\sqcap(A_0,B_1)$, $\sqcap(A_1,B_0)$, and $\sqcap(A_1,B_1)$ are one. If both $A_1 \cup p$ and $B_1 \cup p$ are circuits, then it follows by circuit elimination that each of  
$A_0 \cup B_0, A_0 \cup B_1, A_1 \cup B_0$, and  $A_1 \cup B_0$ contains a circuit each of which must meet both $A$ and $B$. Since all of $A_0, A_1, B_0,$ and $B_1$ are strands, it follows that all of $A_0 \cup B_0, A_0 \cup B_1, A_1 \cup B_0$, and  $A_1 \cup B_0$ are circuits. Hence, by Lemma~\ref{equiv}, we obtain the contradiction that $\sqcap(A_i,B_j) = 1$ for all $i$ and $j$ in $\{0,1\}$.

Now assume that $M$ has no  $A$-strand $A_1$ and $B$-strand $B_1$ distinct from $A_0$ and $B_0$, respectively, such that exactly two of $\sqcap(A_0,B_1)$, $\sqcap(A_1,B_0)$, and $\sqcap(A_1,B_1)$ are one. By Corollary~\ref{exterminate2}, there is an extension $M''$ of $M$ by a pair of independent clones $x$ and $y$ where $\{x,y\} \subseteq \cl_{M''}(A) \cap \cl_{M''}(B)$.   Note that a flat of $M''$ that contains a circuit containing $x$ or $y$ must contain  the line $L$ of $M''$ that is spanned by $\{x,y\}$. 

To prove that $M$ has the desired extension, we shall show that $M''$ has an extension $M'$ by the element $p$ so that $A_0 \cup p$ and $B_0 \cup p$ are circuits. Consider the component of the strand graph of $(M,A,B)$ that contains $A_0$ and $B_0$. Call a strand that labels a vertex in this component {\it special}. For each subset $X$ of $E(M'')$, we define $r(X) = r_{M''}(X)$ and 
\begin{equation*}
r(X \cup p) = 
\begin{cases}
r_{M''}(X) & \text{if $X$ contains a special strand or $\cl_{M''}(X) \supseteq L$};\\
r_{M''}(X) + 1 & \text{otherwise.}
\end{cases}
\end{equation*}
Let ${\mathcal F}$ be the set of subsets $F$ of $E(M'')$ such that $r(F \cup p) = r_{M''}(F).$

We shall complete the proof  that $M''$ has the desired extension $M'$ by the element $p$ by showing that $r$ is a matroid rank function. Assume the contrary. Then it is straightforward to see that $r$ is not submodular. Thus there are subsets $X$ and $Y$ of $E(M'') \cup \{p\}$ such that 
\begin{equation}
\label{eq1}
r(X) + r(Y) < r(X \cup Y) + r(X \cap Y).
\end{equation}
Clearly 
\begin{equation}
\label{eq2}
r(X-p) + r(Y-p) \ge r((X-p) \cup (Y-p)) + r((X-p) \cap (Y-p)).
\end{equation}


For some $\alpha$ and $\beta$ in $\{0,1\}$, we have 
$r(X- p) = r(X) - \alpha$ and $r(Y- p) = r(Y) - \beta$.
Then 
\begin{align*}
r(X- p) + r(Y-p) & = r(X) - \alpha + r(Y) - \beta\\ 
& \le [r(X \cup Y) - \alpha - \beta] + [r(X \cap Y) - 1]\\
& \le r((X \cup Y) - p) + r((X \cap Y) - p),
\end{align*}
where the last step is immediate if $\alpha + \beta > 0$ and also holds if $\alpha + \beta = 0$. 
By (\ref{eq2}), equality must hold throughout the last chain of inequalities. Thus $r((X \cap Y) - p) = r(X \cap Y)-1$ 
so $p \in X \cap Y$. Moreover, $\alpha = 0$ or $\beta = 0$, so $\alpha = \beta = 0$. Thus $X - p, Y- p$, and $(X\cup Y) - p$ are in ${\mathcal F}$, but $(X\cap Y) - p \notin {\mathcal F}$.
Writing $X'$ and $Y'$ for $X - p$ and $Y-p$, respectively, we see that
\begin{equation}
\label{eqmod}
r(X') + r(Y') = r(X' \cup Y') + r(X' \cap Y').
\end{equation}

We now choose a pair $\{X',Y'\}$ of subsets  of $E(M'')$ with $X',Y', X' \cup Y' \in {\mathcal F}$ and $X' \cap Y' \not\in {\mathcal F}$ so that $|X' \cup Y'|$ is a minimum. Next we show the following where  the closure operator in $M''$ has been abbreviated to $\cl$.

\setcounter{theorem}{1}

\begin{sublemma}
\label{notboth}
At least one of $\cl(X')$ and $\cl(Y')$ does not contain $L$.
\end{sublemma}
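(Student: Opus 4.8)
The plan is to argue by contradiction: I would assume that both $\cl(X')$ and $\cl(Y')$ contain the line $L$ and derive a contradiction with $X' \cap Y' \notin {\mathcal F}$. The first observation to record is that, since $L = \cl(\{x,y\})$, a flat contains $L$ precisely when it contains both $x$ and $y$. Hence the contrary assumption amounts to saying that each of $x$ and $y$ lies in $\cl(X') \cap \cl(Y')$.

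The key input is the modularity of the pair $\{X',Y'\}$, which is exactly what~(\ref{eqmod}) provides. This is what lets me invoke Lemma~\ref{elem}. I would apply that lemma twice: with $w = x$ it yields $x \in \cl(X' \cap Y')$, and with $w = y$ it yields $y \in \cl(X' \cap Y')$. Combining these gives $\{x,y\} \subseteq \cl(X' \cap Y')$, and therefore $L \subseteq \cl(X' \cap Y')$.

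To close the argument, I would appeal to the definition of ${\mathcal F}$: a subset of $E(M'')$ belongs to ${\mathcal F}$ if it either contains a special strand or has closure containing $L$. Since I have just shown $\cl(X' \cap Y') \supseteq L$, the second alternative is satisfied by $X' \cap Y'$, so $X' \cap Y' \in {\mathcal F}$. This contradicts the choice of the pair $\{X',Y'\}$, for which $X' \cap Y' \notin {\mathcal F}$, completing the proof of~\ref{notboth}.

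I do not expect a substantive obstacle in this step. The entire argument reduces to translating ``$\cl(X')$ and $\cl(Y')$ contain $L$'' into ``$x$ and $y$ lie in both closures,'' and then feeding the modular pair $\{X',Y'\}$ elementwise into Lemma~\ref{elem}. The only point deserving a moment's care is to be sure the modularity hypothesis of Lemma~\ref{elem} is genuinely in force, but this is guaranteed by~(\ref{eqmod}), which was established precisely for the minimal pair under consideration.
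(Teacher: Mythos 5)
Your proof is correct, and it rests on exactly the same pivot as the paper's: the modularity of the pair $\{X',Y'\}$ recorded in~(\ref{eqmod}), without which neither argument can move the property ``spans $L$'' from $X'$ and $Y'$ down to $X'\cap Y'$. The difference is the tool used for that step. The paper translates ``$\cl(X')\supseteq L$'' into the local connectivity statement $\sqcap(X',L)=2=\sqcap(Y',L)$, notes $\sqcap(X'\cup Y',L)=2$, and then applies Lemma~\ref{subeq} (the submodular-type inequality for $\sqcap$ over a modular pair) to conclude $\sqcap(X'\cap Y',L)=2$, i.e.\ $\cl(X'\cap Y')\supseteq L$. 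You instead work pointwise: since $L=\cl(\{x,y\})$, both closures contain $L$ exactly when they contain $x$ and $y$, and two applications of Lemma~\ref{elem} (with $w=x$ and $w=y$) put both elements, hence $L$, inside $\cl(X'\cap Y')$. Both routes then finish identically, since $\cl(X'\cap Y')\supseteq L$ places $X'\cap Y'$ in ${\mathcal F}$, contradicting the choice of the pair. Your version is marginally more elementary --- it avoids any rank arithmetic with $\sqcap$ and uses only the closure-theoretic Lemma~\ref{elem} --- while the paper's version keeps the argument in the local-connectivity language that the rest of the proof of Theorem~\ref{bigun} is conducted in; there is nothing to choose between them in rigor, and your caution about verifying the modularity hypothesis is on exactly the same footing as the paper's own use of Lemma~\ref{subeq}, which needs it too.
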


Assume both $\cl(X')$ and $\cl(Y')$   contain $L$. Then $\sqcap(X',L) = 2 = \sqcap(Y',L)$. Thus $\sqcap(X'\cup Y',L) = 2$. Hence, by Lemma~\ref{subeq}, $\sqcap(X'\cap Y',L) = 2$, so $X'\cap Y' \in {\mathcal F}$; a contradiction. Therefore \ref{notboth} holds.

In the proof of the next assertion, it will be useful to recall that, in $M''$, the elements $x$ and $y$ both lie on the line $L$ and neither is fixed. 
\begin{sublemma}
\label{p40}
Suppose $A' \subseteq A$ and $B' \subseteq B$. If $\sqcap(A',B') = 2$, then $\sqcap(A'\cup B',L) = 2$.
\end{sublemma}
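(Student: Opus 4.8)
The plan is to reduce the conclusion $\sqcap(A' \cup B', L) = 2$ to the single containment $L \subseteq \cl(A' \cup B')$. Since $x$ and $y$ are independent clones, $r(L) = 2$, so $\sqcap(A' \cup B', L) = r(A' \cup B') + 2 - r(A' \cup B' \cup L)$ equals $2$ precisely when $r(A' \cup B' \cup L) = r(A' \cup B')$, that is, when $L \subseteq \cl(A' \cup B')$. As $L = \cl(\{x,y\})$, it suffices to show that $x$ and $y$ both lie in $\cl(A' \cup B')$.

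First I would promote the lone hypothesis $\sqcap(A', B') = 2$ to three further equalities. Because local connectivity is monotone in each of its arguments (a standard consequence of the submodularity of $r$) and $\sqcap(A, B) = 2$ by the exactness of the $3$-separation $(A,B)$, the chain $2 = \sqcap(A', B') \le \sqcap(A', B) \le \sqcap(A, B) = 2$, together with its mirror $2 = \sqcap(A', B') \le \sqcap(A, B') \le \sqcap(A, B) = 2$, forces $\sqcap(A', B) = \sqcap(A, B') = 2$. Rewriting each of the four equalities $\sqcap(A,B) = \sqcap(A',B) = \sqcap(A,B') = \sqcap(A',B') = 2$ as a rank equation then expresses the rank of each relevant union in terms of $r(A'), r(B'), r(A),$ and $r(B)$; these identities are the fuel for the next step.

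The crux is to exhibit a modular pair to which Lemma~\ref{elem} applies. I would take $U = A' \cup B$ and $V = A \cup B'$, so that $U \cup V = E(M)$ and $U \cap V = A' \cup B'$. Substituting the four rank equations from the previous paragraph into $r(U) + r(V)$ and into $r(U \cup V) + r(U \cap V)$, both sides collapse to $r(A) + r(B) + r(A') + r(B') - 4$; hence $\{U, V\}$ is a modular pair. Since $x \in \cl(A) \cap \cl(B) \subseteq \cl(V) \cap \cl(U)$, Lemma~\ref{elem} gives $x \in \cl(U \cap V) = \cl(A' \cup B')$, and the same argument applies to $y$. Therefore $L \subseteq \cl(A' \cup B')$, which yields $\sqcap(A' \cup B', L) = 2$.

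The only genuine obstacle is guessing the correct modular pair $\{A' \cup B,\, A \cup B'\}$; once it is in hand, the verification is a mechanical substitution followed by a single invocation of Lemma~\ref{elem}. A minor bookkeeping point is that all of these ranks are computed in $M''$, but since $A', B', A, B \subseteq E(M)$ and $M'' \setminus \{x,y\} = M$, each of these ranks agrees with its value in $M$, so the four hypotheses transfer to $M''$ unchanged.
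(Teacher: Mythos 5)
Your proof is correct, but it takes a genuinely different route from the paper's. The paper argues by contradiction: if $\sqcap(A'\cup B',L)\neq 2$, then $x\notin\cl(A'\cup B')$ (a step that leans on the standing observation that a flat of $M''$ containing a circuit through $x$ or $y$ must contain all of $L$, i.e., on the clone structure of $\{x,y\}$), whence $r(A'\cup x)=r(A')+1$, $r(B'\cup x)=r(B')+1$, and $r(A'\cup B'\cup x)=r(A'\cup B')+1$; the same monotonicity sandwich you use, applied instead to $\sqcap(A'\cup x,B'\cup x)$, then forces the numerical contradiction $2=\sqcap(A',B')+1=3$. Your argument is direct: monotonicity pins down $\sqcap(A',B)=\sqcap(A,B')=2$, the resulting four rank identities show that $\{A'\cup B,\,A\cup B'\}$ is a modular pair with union $E(M)$ and intersection $A'\cup B'$, and Lemma~\ref{elem} then places $x$ and $y$ in $\cl(A'\cup B')$. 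What your version buys is independence from the clone property: you use only that $x$ and $y$ are independent elements of $\cl_{M''}(A)\cap\cl_{M''}(B)$, so your argument works verbatim for any such pair in the guts, and it gives Lemma~\ref{elem} a role the paper reserves for later (it is invoked only in the proof of \ref{allones}). The paper's version stays entirely within local-connectivity calculus and is shorter on the page, at the cost of being indirect and of relying on the remark about flats and circuits through $x$. Your closing bookkeeping point, that the relevant ranks agree in $M$ and $M''$ so the modularity computation is valid in $M''$ where Lemma~\ref{elem} must be applied, is also handled correctly.
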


Suppose $\sqcap(A'\cup B',L) \neq 2$. Then $x \notin \cl(A' \cup B')$. Thus  
$r(A' \cup B' \cup x) = r(A' \cup B') + 1$, so $r(A'  \cup x) = r(A') + 1$ and $r(B' \cup x) = r(B') + 1$. Now $x \in \cl(A) \cap \cl(B)$. Thus 
\begin{align*}
2 = \sqcap(A',B') & \le \sqcap(A' \cup x,B'\cup x)\\
& \le \sqcap(A \cup x,B\cup x)\\
& = \sqcap(A,B) =2.
\end{align*}
Thus $\sqcap(A' \cup x,B'\cup x) = 2$, so  
\begin{align*}
2 & = r(A' \cup x) + r(B' \cup x) - r(A' \cup B' \cup x)\\
&  = r(A') + 1 + r(B') + 1 - r(A' \cup B') - 1\\
& = \sqcap(A',B') + 1\\
& =2 + 1.
\end{align*}
This contradiction completes the proof of \ref{p40}.

Recall that $\sqcap(A,L) = 2 =  \sqcap(B,L).$

\begin{sublemma}
\label{p23}
For $Z\subseteq A$, 
$$\sqcap(Z,L) = \sqcap(Z,B) = \sqcap(Z,B \cup L).$$
\end{sublemma}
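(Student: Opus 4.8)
The plan is to prove the two equalities separately, pivoting through the combined set $B \cup L$. I would first establish the right-hand equality $\sqcap(Z,B) = \sqcap(Z,B \cup L)$ by a pure closure computation. Since $\{x,y\} \subseteq \cl(B)$ we have $L \subseteq \cl(B)$, so $\cl(B \cup L) = \cl(B)$ and, for any $Z$, also $\cl(Z \cup B \cup L) = \cl(Z \cup B)$. Substituting $r(B \cup L) = r(B)$ and $r(Z \cup B \cup L) = r(Z \cup B)$ into $\sqcap(Z,B\cup L) = r(Z) + r(B \cup L) - r(Z \cup B \cup L)$ immediately yields $\sqcap(Z,B \cup L) = \sqcap(Z,B)$.

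The remaining equality $\sqcap(Z,L) = \sqcap(Z,B \cup L)$ carries the real content, and I would extract it from Lemma~\ref{PQR} applied to the triple $(P,Q,R) = (L,B,Z)$, namely
$$\sqcap(L \cup B,Z) + \sqcap(L,B) = \sqcap(L \cup Z,B) + \sqcap(L,Z).$$
Here $\sqcap(L,B) = 2$ by the fact recalled just before the statement. Hence the identity collapses to the desired $\sqcap(Z,B\cup L) = \sqcap(Z,L)$ the moment I can show the cross term $\sqcap(L \cup Z,B)$ is also equal to $2$.

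That verification is the single step requiring a small argument, and I regard it as the main (if modest) obstacle. I would handle it by a monotonicity sandwich. Because $Z \subseteq A$ we have $L \subseteq L \cup Z \subseteq A \cup L$, and since $\sqcap$ is increasing in each argument (immediate from submodularity of $r$, and used freely elsewhere in the paper) it follows that
$$\sqcap(L,B) \le \sqcap(L \cup Z,B) \le \sqcap(A \cup L,B).$$
The lower bound is $\sqcap(L,B) = 2$. For the upper bound, $L \subseteq \cl(A)$ gives $\cl(A \cup L) = \cl(A)$, so $\sqcap(A \cup L,B) = \sqcap(A,B) = 2$ by exactness of the $3$-separation. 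Thus $\sqcap(L \cup Z,B) = 2$, which closes the argument.

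Combining the two equalities gives $\sqcap(Z,L) = \sqcap(Z,B \cup L) = \sqcap(Z,B)$, as required. The whole proof is short and uses nothing beyond $L \subseteq \cl(A) \cap \cl(B)$, the value $\sqcap(A,B) = 2$, monotonicity of $\sqcap$, and a single application of Lemma~\ref{PQR}; I would not expect any hidden difficulty once the term $\sqcap(L \cup Z,B)$ is pinned down.
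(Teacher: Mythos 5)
Your proof is correct and follows essentially the same route as the paper: the first equality by the same rank substitution using $L \subseteq \cl(B)$, and the second by the identical application of Lemma~\ref{PQR} to $(L,B,Z)$ together with the same monotonicity sandwich $2 = \sqcap(L,B) \le \sqcap(Z \cup L,B) \le \sqcap(A \cup L,B) = \sqcap(A,B) = 2$. No issues.
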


We have 
\begin{align*}
\sqcap (Z,B) & = r(Z) + r(B) - r(Z \cup B)\\
& = r(Z) + r(B \cup L) - r(Z \cup B \cup L)\\
& = \sqcap(Z,B \cup L).
\end{align*}
By Lemma~\ref{PQR}, 
\begin{equation}
\label{eqn9}
\sqcap(Z,B \cup L) + \sqcap(B,L) = \sqcap(Z\cup L,B) + \sqcap(Z,L).
\end{equation} 
Now $\sqcap(B,L) = 2$, so $2 \le \sqcap(Z \cup L,B) \le \sqcap(A \cup L,B) = \sqcap(A,B) = 2$. Hence, by (\ref{eqn9}),  
$\sqcap(Z,B \cup L)= \sqcap(Z,L)$ so \ref{p23} holds.

\begin{sublemma}
\label{p16.8}
For $Z\subseteq E(M'')$, 
$$\sqcap(Z,L) + \sqcap(Z\cap A, Z \cap B) = \sqcap(Z\cap A,L) + \sqcap(Z\cap B,L).$$
\end{sublemma}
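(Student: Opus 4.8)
The plan is to reduce the identity to a single application of the local-connectivity exchange identity (Lemma~\ref{PQR}), after which the whole difficulty collapses to recognizing one cross-term. Writing $Z_A = Z \cap A$ and $Z_B = Z \cap B$, and taking $Z$ disjoint from the added points $x$ and $y$ so that $Z = Z_A \cup Z_B$, I would apply Lemma~\ref{PQR} with $P = Z_A$, $Q = Z_B$, and $R = L$ to obtain
$$\sqcap(Z, L) + \sqcap(Z_A, Z_B) = \sqcap(Z_A \cup L, Z_B) + \sqcap(Z_A, L).$$
Comparing this with the target identity, it suffices to prove the single equation $\sqcap(Z_A \cup L, Z_B) = \sqcap(Z_B, L)$.

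To establish that equation I would sandwich the middle term using monotonicity of local connectivity in its first argument: since $L \subseteq Z_A \cup L \subseteq A \cup L$, we have
$$\sqcap(L, Z_B) \le \sqcap(Z_A \cup L, Z_B) \le \sqcap(A \cup L, Z_B).$$
The left-hand end is exactly $\sqcap(Z_B, L)$. For the right-hand end I would invoke \ref{p23} with the roles of $A$ and $B$ interchanged; this symmetric version is valid because its proof used only the data $\sqcap(A, L) = 2 = \sqcap(B, L)$ and $\sqcap(A, B) = 2$, all of which are symmetric in $A$ and $B$. Applied to $Z_B \subseteq B$ it gives $\sqcap(Z_B, A \cup L) = \sqcap(Z_B, L)$. (Equivalently, since $x, y \in \cl(A)$ we have $\cl(A \cup L) = \cl(A)$, so $\sqcap(A \cup L, Z_B) = \sqcap(A, Z_B)$, and the symmetric form of \ref{p23} then finishes the job.) Thus both ends equal $\sqcap(Z_B, L)$, forcing the middle term to equal $\sqcap(Z_B, L)$ as well, and substituting back yields \ref{p16.8}.

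The main obstacle, a mild one, is exactly the identification of $\sqcap(Z_A \cup L, Z_B)$ with $\sqcap(Z_B, L)$: the content there is that enlarging $L$ by the $A$-part $Z_A$ cannot raise its local connectivity with the $B$-part past what $A$ itself achieves, and \ref{p23} is what pins that value down. I would also be careful to record at the outset that $Z$ is taken disjoint from $x$ and $y$, since this is the only point where the setup enters and is precisely what keeps the first term produced by Lemma~\ref{PQR} equal to $\sqcap(Z, L)$ rather than to $\sqcap(Z_A \cup Z_B, L)$.
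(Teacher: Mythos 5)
Your proposal is correct and follows essentially the same route as the paper: a single application of Lemma~\ref{PQR} with $P = Z\cap A$, $Q = Z\cap B$, $R = L$, followed by sandwiching $\sqcap((Z\cap A)\cup L, Z\cap B)$ between $\sqcap(Z\cap B, L)$ and $\sqcap(Z\cap B, A\cup L)$ via the symmetric form of \ref{p23}. Your explicit remark that $Z$ is taken disjoint from $\{x,y\}$ (so that $Z = (Z\cap A)\cup(Z\cap B)$) is a point the paper leaves implicit, and your justification that \ref{p23} symmetrizes is exactly what the paper's phrase ``by \ref{p23} and symmetry'' elides.
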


By Lemma~\ref{PQR}, 
$$\sqcap((Z\cap A) \cup (Z \cap B),L) + \sqcap(Z\cap A, Z \cap B) = \sqcap((Z\cap A) \cup L,Z \cap B) + \sqcap(Z\cap A,L).$$
By \ref{p23} and symmetry, 
\begin{align*}
\sqcap(Z \cap B,A) = \sqcap(Z \cap B,L) & \le \sqcap(Z \cap B,(Z\cap A) \cup L)\\
& \le  \sqcap(Z \cap B,A \cup L) = \sqcap(Z\cap B,A).
\end{align*}
Thus $\sqcap(Z \cap B,L) = \sqcap(Z \cap B,(Z\cap A) \cup L).$ Therefore \ref{p16.8} holds.

\begin{sublemma}
\label{strand}
For $A' \subseteq A$ and $B' \subseteq B$, if $\sqcap(A' \cup B',L) = 1 = \sqcap(B',L)$ and $A'$ contains a special strand, then $B'$ also contains a special strand.
\end{sublemma}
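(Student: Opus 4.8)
The goal is to show that if $\sqcap(A'\cup B',L)=1=\sqcap(B',L)$ and $A'$ contains a special strand, then $B'$ contains a special strand. Let me look at what tools I have and what structure this reveals.

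We have $A' \subseteq A$, $B' \subseteq B$, with:
- $\sqcap(A' \cup B', L) = 1$
- $\sqcap(B', L) = 1$
- $A'$ contains a special strand $A_s$

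I need to conclude $B'$ contains a special strand.

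Let me think about what $\sqcap(A' \cup B', L) = 1$ tells me via sublemma 2.4 (p16.8).

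By p16.8 with $Z = A' \cup B'$ (so $Z \cap A = A'$, $Z \cap B = B'$):
$$\sqcap(A' \cup B', L) + \sqcap(A', B') = \sqcap(A', L) + \sqcap(B', L).$$

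So: $1 + \sqcap(A', B') = \sqcap(A', L) + 1$, giving $\sqcap(A', L) = \sqcap(A', B')$.

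By p23, $\sqcap(A', L) = \sqcap(A', B)$. So $\sqcap(A', B) = \sqcap(A', B')$.

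Since $A'$ contains a special strand $A_s$, and special strands have $\sqcap$ with $B$ equal to 1... Let me think. An $A$-strand has $\sqcap(A_s, B) = 1$ by definition (minimal subset with $\sqcap = 1$). So $\sqcap(A', B) \geq 1$.

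Also $\sqcap(B', L) = 1$ and by p23 (symmetric version) $\sqcap(B', A) = \sqcap(B', L) = 1$.

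The plan is to locate, inside $B'$, a $B$-strand $B_s$ that is adjacent in the strand graph to a special $A$-strand sitting inside $A'$; once $\sqcap(A_s,B_s)=1$ is established, $B_s$ will lie in the same component of the strand graph as $A_s$ and hence be special. Let $A_s$ be a special strand with $A_s\subseteq A'$. Since $A'\subseteq A$, this $A_s$ is necessarily an $A$-strand, so $\sqcap(A_s,B)=1$, and hence $\sqcap(A_s,L)=1$ by \ref{p23}.

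To produce $B_s$, I would first note that the hypothesis $\sqcap(B',L)=1$, together with the symmetric form of \ref{p23} (interchanging the roles of $A$ and $B$, which is legitimate since $\sqcap(A,L)=2=\sqcap(B,L)$), gives $\sqcap(A,B')=\sqcap(B',L)=1$. Consequently $B'$ contains a minimal subset $B_s$ with $\sqcap(A,B_s)=1$; this $B_s$ is by definition a $B$-strand, and a second application of \ref{p23} yields $\sqcap(B_s,L)=1$.

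The crux is to show $\sqcap(A_s,B_s)=1$, and the key observation is that \ref{p16.8} converts this into a statement about local connectivity with $L$, which the hypotheses already control. Applying \ref{p16.8} to $Z=A_s\cup B_s$ (so that $Z\cap A=A_s$ and $Z\cap B=B_s$) gives
\[
\sqcap(A_s\cup B_s,L)+\sqcap(A_s,B_s)=\sqcap(A_s,L)+\sqcap(B_s,L)=2.
\]
Since $A_s\cup B_s\subseteq A'\cup B'$, monotonicity of local connectivity gives $\sqcap(A_s\cup B_s,L)\le\sqcap(A'\cup B',L)=1$, whence $\sqcap(A_s,B_s)\ge1$; and the reverse inequality $\sqcap(A_s,B_s)\le\sqcap(A,B_s)=1$ is immediate from monotonicity. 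Thus $\sqcap(A_s,B_s)=1$, so $A_s$ and $B_s$ are adjacent in the strand graph, and $B_s$ is the required special strand inside $B'$.

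The step I expect to be the main obstacle is resisting the temptation to argue geometrically that $\cl(A_s)$ and $\cl(B_s)$ meet in a common point of $L$: although each of $\sqcap(A_s,L)$, $\sqcap(B_s,L)$, and $\sqcap(A'\cup B',L)$ equals $1$, in a non-modular matroid $\sqcap(A_s,L)=1$ need not force $\cl(A_s)\cap L$ to contain a point, so a direct ``common point'' argument can fail. Routing everything through the identity \ref{p16.8} and the monotonicity bound sidesteps this entirely; the only remaining care is the bookkeeping that a special strand inside $A'$ is an $A$-strand and that adjacency in the strand graph propagates the label ``special''.
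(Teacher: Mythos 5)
Your proof is correct and follows essentially the same route as the paper's: use \ref{p23} to convert $\sqcap(B',L)=1$ into $\sqcap(B',A)=1$ and extract a $B$-strand $B_s \subseteq B'$, then apply \ref{p16.8} to $A_s \cup B_s$ and use monotonicity of local connectivity against $\sqcap(A'\cup B',L)=1$ to force $\sqcap(A_s,B_s)=1$, making $B_s$ special. The only cosmetic difference is that the paper pins down $\sqcap(A_1\cup B_1,L)=1$ exactly before reading off $\sqcap(A_1,B_1)=1$, whereas you combine the two bounds on $\sqcap(A_s,B_s)$ directly; the content is identical.
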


Let $A_1$ be a special strand contained in $A'$. As $\sqcap(B',L) = 1$, it follows by \ref{p23} that $\sqcap(B',A) = 1$, so $B'$ contains a $B$-strand,  $B_1$ say. Now, by \ref{p16.8}, 
$$\sqcap(A_1 \cup B_1,L) + \sqcap(A_1,B_1) = \sqcap(A_1,L) + \sqcap(B_1,L) = 2.$$
As $1 = \sqcap(B_1,L) \le \sqcap(A_1 \cup B_1,L) \le \sqcap(A'  \cup B',L) = 1$, we deduce that $\sqcap(A_1 \cup B_1,L) = 1$, so $\sqcap(A_1,B_1) = 1$. Hence $B_1$ is a special $B$-strand so \ref{strand} holds.  

\begin{sublemma}
\label{prestranded}
Suppose $\cl(X') \not \supseteq L$. If $X' \cap A$ contains a strand, then $\sqcap(X'\cap A,L) = 1 = \sqcap(X',L)$. If, in addition,   $X'\cap B$ contains a strand, then  $\sqcap(X'\cap B,L) = 1 = \sqcap(X'\cap A,X'\cap B)$. 
\end{sublemma}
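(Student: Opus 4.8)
The plan is to exploit the rank-two flat $L$ directly. The hypothesis $\cl(X') \not\supseteq L$ is the crucial input: since $L$ has rank two and is not contained in $\cl(X')$, we have $r(X' \cup L) \ge r(X') + 1$, whence
$$\sqcap(X',L) = r(X') + r(L) - r(X'\cup L) \le r(X') + 2 - (r(X')+1) = 1.$$
This single inequality does most of the work, because local connectivity is monotone in each argument.

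First I would establish the first assertion. Let $A_2$ be a strand contained in $X' \cap A$; since $X' \cap A \subseteq A$, this strand is necessarily an $A$-strand, so $\sqcap(A_2,B) = 1$. Applying \ref{p23} with $Z = A_2$ gives $\sqcap(A_2,L) = \sqcap(A_2,B) = 1$, and monotonicity yields $\sqcap(X'\cap A,L) \ge \sqcap(A_2,L) = 1$. In the other direction, monotonicity together with the displayed inequality gives $\sqcap(X'\cap A,L) \le \sqcap(X',L) \le 1$. Hence $\sqcap(X'\cap A,L) = 1$, and then $1 = \sqcap(X'\cap A,L) \le \sqcap(X',L) \le 1$ forces $\sqcap(X',L) = 1$ as well.

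Next I would obtain $\sqcap(X'\cap B,L) = 1$ by the symmetric version of the argument just given, using the symmetric form of \ref{p23} for subsets of $B$: if $B_2$ is a strand in $X'\cap B$, then $B_2$ is a $B$-strand with $\sqcap(B_2,A) = 1$, so $\sqcap(B_2,L) = 1$, and the same squeeze gives $\sqcap(X'\cap B,L) = 1$. Finally, I would substitute into \ref{p16.8} with $Z = X'$, which reads
$$\sqcap(X',L) + \sqcap(X'\cap A, X'\cap B) = \sqcap(X'\cap A,L) + \sqcap(X'\cap B,L).$$
With all three local connectivities to $L$ now known to equal one, this collapses to $1 + \sqcap(X'\cap A, X'\cap B) = 2$, giving $\sqcap(X'\cap A, X'\cap B) = 1$, as required.

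There is no serious obstacle here: the argument is a short squeeze once one reads the hypothesis $\cl(X') \not\supseteq L$ as the rank bound $r(X'\cup L) \ge r(X') + 1$. The only points requiring a little care are the appeal to the symmetric form of \ref{p23} (legitimate since the whole configuration is symmetric in $A$ and $B$, with $\sqcap(A,L) = 2 = \sqcap(B,L)$) and the observation that a strand inside $X'\cap A$ must be an $A$-strand.
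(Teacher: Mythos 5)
Your proof is correct and takes essentially the same route as the paper's: both derive $\sqcap(X',L) \le 1$ from $\cl(X') \not\supseteq L$, use \ref{p23} (and its $A$--$B$ symmetric form) together with monotonicity of $\sqcap$ to squeeze $\sqcap(X'\cap A,L) = 1 = \sqcap(X',L)$ and $\sqcap(X'\cap B,L)=1$, and then finish by substituting into \ref{p16.8}. The only cosmetic difference is the order of operations: you apply \ref{p23} to the strand and then invoke monotonicity, while the paper bounds $\sqcap(X'\cap A,B)$ by monotonicity first and then converts to $L$.
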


As $\sqcap(X',L) < 2$, we know that $\sqcap(X'\cap A,L) \le 1$. Since $X' \cap A$ contains a strand, $\sqcap(X' \cap A, B) \ge 1$, so $\sqcap(X' \cap A, L) \ge 1$. Hence $\sqcap(X' \cap A, L) =  1$ and $\sqcap(X' , L) = 1$. Now suppose $X'\cap B$ also contains a strand. Then $\sqcap(X' \cap B, L) =  1$. Moreover, by \ref{p16.8}, $\sqcap(X'\cap A,X'\cap B) = 1$. Hence \ref{prestranded} holds.

Recall that $X',Y' \in {\mathcal F}$.

\begin{sublemma}
\label{stranded}
Suppose 
$\cl(X') \not \supseteq L$. Then one of the following occurs.
\begin{itemize}
\item[(i)] 
Both $X'\cap A$ and $X' \cap B$ contain special strands, 
$\sqcap(X'\cap A,L) = 1 = \sqcap(X'\cap B,L)$ and $\sqcap(X'\cap A,X'\cap B) = 1$; or 
\item[(ii)] $X'\cap A$ contains a special strand, $X'\cap B$ does not contain a strand, $\sqcap(X'\cap A,L) = 1$ and  $\sqcap(X'\cap B,L) = 0 = \sqcap(X'\cap A,X'\cap B)$; or 
\item[(iii)] $X'\cap B$ contains a special strand, $X'\cap A$ does not contain a strand, $\sqcap(X'\cap B,L) = 1$ and  $\sqcap(X'\cap A,L) = 0 = \sqcap(X'\cap A,X'\cap B)$. 
\end{itemize}
\end{sublemma}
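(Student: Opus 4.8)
```latex
The plan is to prove \ref{stranded} by combining \ref{prestranded} with the defining
property of $\mathcal F$. The key observation is that membership of $X'$ in $\mathcal F$
forces a strand to be present. Recall that $X' \in \mathcal F$ means $r(X' \cup p) = r(X')$,
which by the definition of $r(\cdot \cup p)$ says that either $X'$ contains a special strand
or $\cl(X') \supseteq L$. Since we are assuming $\cl(X') \not\supseteq L$, the set $X'$ must
contain a special strand. A special strand is either an $A$-strand or a $B$-strand, so it is
contained in $A$ or in $B$; thus at least one of $X' \cap A$ and $X' \cap B$ contains a
special strand. This immediately splits the argument into cases according to which of
$X' \cap A$ and $X' \cap B$ contain a strand.

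First I would treat the case where $X' \cap A$ contains a special strand, and ask whether
$X' \cap B$ contains a strand. If it does, then \ref{prestranded} (applied with the roles of
$A$ and $B$ as stated, and then symmetrically) gives $\sqcap(X' \cap A, L) = 1 =
\sqcap(X' \cap B, L)$ together with $\sqcap(X' \cap A, X' \cap B) = 1$; this is exactly
conclusion (i), once I verify that the strand in $X' \cap B$ is in fact special. For that
verification I would apply \ref{strand}: since $\sqcap(X', L) = 1$ by the first part of
\ref{prestranded} and $\sqcap(X' \cap B, L) = 1$, and $X' \cap A$ contains a special strand,
\ref{strand} (with $A' = X' \cap A$ and $B' = X' \cap B$, noting $\sqcap(X' \cap A \cup
X' \cap B, L) = \sqcap(X', L) = 1$) yields that $X' \cap B$ contains a special strand, giving
(i). If instead $X' \cap B$ contains no strand, then by \ref{p23} applied to $X' \cap B$ we
get $\sqcap(X' \cap B, L) = \sqcap(X' \cap B, A) = 0$, since a nonzero value would force a
$B$-strand to be present. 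The first part of \ref{prestranded} still gives $\sqcap(X' \cap A,
L) = 1$, and \ref{p16.8} then forces $\sqcap(X' \cap A, X' \cap B) = 0$; this is conclusion
(ii). The case where only $X' \cap B$ contains a special strand is symmetric and produces
conclusion (iii).

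The main obstacle I anticipate is the bookkeeping in the first case, specifically ensuring
the strand found in $X' \cap B$ is \emph{special} rather than merely a $B$-strand. The
definition of $\mathcal F$ guarantees a special strand somewhere in $X'$, but \ref{prestranded}
only detects the presence of an arbitrary strand via local connectivity; it does not track
specialness. Bridging this gap is precisely what \ref{strand} is designed to do, so the proof
hinges on checking that its hypotheses $\sqcap(A' \cup B', L) = 1 = \sqcap(B', L)$ hold in the
relevant case. The local-connectivity hypotheses needed are supplied by \ref{prestranded}
and the equality $\sqcap(X', L) = 1$, so the argument should close cleanly. A secondary point
to watch is confirming that exactly one strand-containment pattern among the three listed can
occur—but this is automatic, since at least one of $X' \cap A$, $X' \cap B$ must contain a
special strand, and the three cases exhaust the possibilities according to which sides carry
a strand.
```
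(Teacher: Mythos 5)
Your proposal is correct and follows essentially the same route as the paper: extract a special strand from $X'\in{\mathcal F}$ using $\cl(X')\not\supseteq L$, apply \ref{prestranded} for the local connectivities, upgrade the strand in $X'\cap B$ to a special one via \ref{strand}, and handle the no-strand case with \ref{p23} and \ref{p16.8}. The only difference is cosmetic: in case (ii) you argue forward (no strand gives $\sqcap(X'\cap B,A)=0$, then \ref{p16.8} kills $\sqcap(X'\cap A,X'\cap B)$), whereas the paper runs the same lemmas as a proof by contradiction.
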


Since  $X' \in {\mathcal F}$ but $\cl(X') \not \supseteq L$, at least one of $X' \cap A$ and $X'\cap B$ contains a special strand. Assume $X'\cap A$ does. Then, by \ref{prestranded}, $\sqcap(X' \cap A, L) =  1$ and, if $X'\cap B$ also contains a strand, then   $\sqcap(X' \cap B, L) =  1 = \sqcap(X'\cap A,X'\cap B)$. As $\sqcap(X,L) = 1$, it follows  by \ref{strand} that  $X' \cap B$ contains a special strand, so  (i) holds. 

Now suppose $X'\cap B$ does not contain a strand. Then, by \ref{p16.8}, $\sqcap(X'\cap A,X'\cap B) = \sqcap(X'\cap B,L)$. If this quantity is $0$, then (ii) holds and \ref{stranded} is proved. Thus we may assume that $\sqcap(X'\cap A,X'\cap B) = 1= \sqcap(X'\cap B,L)$. Then, by \ref{p23}, 
$\sqcap(X'\cap B,A) = 1$, so $X'\cap B$ contains a strand; a contradiction. Hence 
\ref{stranded} holds.

Now, for $Z$ in $\{A,B\}$, let 
$$\gamma(Z) = r(X' \cap Z) + r(Y' \cap Z) - r((X' \cup Y') \cap Z) - r(X' \cap Y'\cap Z).$$
By submodularity, $\gamma(Z) \ge 0.$

\begin{sublemma}
\label{sums}
\begin{multline*}0= \gamma(A) + \gamma(B)\\
+ [\sqcap((X' \cup Y') \cap A,(X' \cup Y') \cap B) - \sqcap(Y' \cap A,Y' \cap B)]\\
- 
 [\sqcap(X' \cap A,X' \cap B) - \sqcap(X'\cap Y' \cap A,X' \cap Y' \cap B)].
 \end{multline*}
 \end{sublemma}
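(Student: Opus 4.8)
The plan is to turn every local-connectivity term in the statement into a rank expression, to collect the pieces using the modularity of $\{X',Y'\}$ recorded in (\ref{eqmod}), and to reduce the whole identity to a single fact: that rank becomes additive across the partition $(A,B)$ once the guts line $L$ is contracted. Throughout, write $r$ for $r_{M''}$ and recall $r(L)=2$, $\sqcap(A,L)=2=\sqcap(B,L)$, and $\sqcap(A,B)=2$.

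First I would eliminate the four terms of the shape $\sqcap(Z\cap A,Z\cap B)$. By \ref{p16.8}, for every $Z\subseteq E(M'')$ one has $\sqcap(Z\cap A,Z\cap B)=\sqcap(Z\cap A,L)+\sqcap(Z\cap B,L)-\sqcap(Z,L)$. Applying this with $Z$ running over $X'\cup Y'$, $Y'$, $X'$, and $X'\cap Y'$, and using that intersecting with $A$ (respectively $B$) commutes with union and intersection of $X'$ and $Y'$, the bracketed combination of the four local connectivities splits into an $A$-part built from the four terms $\sqcap(\,\cdot\cap A,L)$, a symmetric $B$-part, and a ``full'' part $\sqcap(X',L)+\sqcap(Y',L)-\sqcap(X'\cup Y',L)-\sqcap(X'\cap Y',L)$.

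Next I would expand each $\sqcap(\,\cdot\,,L)=r(\,\cdot\,)+r(L)-r(\,\cdot\,\cup L)$. In the $A$-part, the rank terms not involving $L$ reassemble precisely into $\gamma(A)$, while the terms of the form $r(\,\cdot\,\cup L)$ reassemble into the submodularity defect $d_A$ of $\{(X'\cap A)\cup L,(Y'\cap A)\cup L\}$; the $B$-part behaves symmetrically. In the full part, the rank terms not involving $L$ vanish by the modularity (\ref{eqmod}) of $\{X',Y'\}$, leaving the negative of the submodularity defect $d$ of $\{X'\cup L,Y'\cup L\}$. Substituting back, the copies of $\gamma(A)$ and of $\gamma(B)$ cancel and the right-hand side collapses to $d_A+d_B-d$, where $d_A,d_B,d$ are the submodularity defects of $\{X'\cap A,Y'\cap A\}$, $\{X'\cap B,Y'\cap B\}$, and $\{X',Y'\}$ computed in $M''/L$ (equivalently, of the corresponding sets with $L$ adjoined in $M''$).

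The crux, which I expect to be the main obstacle, is to show $d=d_A+d_B$. I would establish the single splitting identity $r(W\cup L)=r((W\cap A)\cup L)+r((W\cap B)\cup L)-r(L)$ for every $W\subseteq E(M'')$; feeding $W\in\{X',Y',X'\cup Y',X'\cap Y'\}$ into it and cancelling the four copies of $r(L)$ yields $d=d_A+d_B$ at once, and hence $0=\gamma(A)+\gamma(B)+\cdots$. The splitting identity is exactly the statement that $\sqcap((W\cap A)\cup L,(W\cap B)\cup L)=r(L)=2$, which I would get from monotonicity of local connectivity: it is at least $\sqcap(L,L)=2$ since $L$ lies in both sets, and at most $\sqcap(A\cup L,B\cup L)=\sqcap(A,B)=2$ since $L\subseteq\cl(A)\cap\cl(B)$. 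Put differently, contracting $L$ makes $A$ a separator of $M''/L$ (with $x,y$ as loops), so rank is additive across $(A,B)$ there. The one point needing care is that $x$ and $y$ lie on $L$, so $W\cup L=(W\cap A)\cup(W\cap B)\cup L$ even when $W$ meets $\{x,y\}$; this is what keeps the $A$/$B$ split honest.
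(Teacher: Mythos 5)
Your proof is correct, but it takes a genuinely different and heavier route than the paper's. The paper proves \ref{sums} in a single chain of equalities: starting from the modularity equation (\ref{eqmod}), it rewrites each of the four ranks $r(X')$, $r(Y')$, $r(X'\cup Y')$, $r(X'\cap Y')$ as $r(W\cap A)+r(W\cap B)-\sqcap(W\cap A,W\cap B)$ --- which is just the definition of local connectivity, since each such $W$ is the disjoint union of $W\cap A$ and $W\cap B$ --- and then regroups, the rank terms giving $\gamma(A)+\gamma(B)$ and the connectivity terms giving the two brackets. That argument is purely formal: it uses no property of $L$, of strands, or of the $3$-separation, and would hold verbatim for any modular pair and any partition of the ground set. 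Your argument instead detours through the guts line: you convert each cross-connectivity $\sqcap(Z\cap A,Z\cap B)$ into connectivities with $L$ via \ref{p16.8}, and you must then prove the genuinely non-trivial splitting fact $d=d_A+d_B$ (equivalently, that $A$ is a separator of $M''/L$), which you do correctly by sandwiching $\sqcap((W\cap A)\cup L,(W\cap B)\cup L)$ between $\sqcap(L,L)=2$ and $\sqcap(A\cup L,B\cup L)=\sqcap(A,B)=2$; your sign bookkeeping (the $\gamma$'s cancelling, the right-hand side collapsing to $d_A+d_B-d$) checks out. What your route buys is a geometric reading of \ref{sums} --- rank becomes additive across $(A,B)$ once the guts line is contracted --- together with a reusable splitting identity; what it costs is dependence on the guts-line structure where none is needed, plus reliance on \ref{p16.8}, whose statement, applied to sets $Z$ meeting $\{x,y\}$, is where the tacit assumption that the sets decompose as unions of their traces on $A$ and $B$ resides (the paper's own proof makes the same tacit assumption at its disjoint-union step, so neither argument is worse off on that point).
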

 
 To see this, observe, by (\ref{eqmod}) that 
 \begin{align*}
 0 & = r(X') + r(Y') - r(X' \cup Y') - r(X' \cap Y')\\
 & = [r(X'\cap A) + r(X' \cap B) - \sqcap(X' \cap A,X' \cap B)]\\
  & \hspace*{0.1in} +  [r(Y'\cap A) + r(Y' \cap B) - \sqcap(Y' \cap A,Y' \cap B)]\\
  & \hspace*{0.2in}- [r((X'\cup Y')\cap A) + r((X'\cup Y') \cap B) - \sqcap((X'\cup Y') \cap A,(X'\cup Y') \cap B)]\\
  & \hspace*{0.3in} -  [r(X' \cap Y'\cap A) + r(X' \cap Y' \cap B) - \sqcap(X' \cap Y' \cap A, X' \cap Y' \cap B)]\\
 & = \gamma(A) + \gamma(B) + [\sqcap((X' \cup Y') \cap A,(X' \cup Y') \cap B) - \sqcap(Y' \cap A,Y' \cap B)] \\
 & \hspace*{0.3in} - 
 [\sqcap(X' \cap A,X' \cap B) - \sqcap(X'\cap Y' \cap A,X' \cap Y' \cap B)].
 \end{align*}
 
By \ref{notboth},  at most one of $\cl(X')$ and $\cl(Y')$ contains $L$. We shall now assume that 

\begin{sublemma}
\label{assum}
$\cl(Y')$ contains $L$ but $\cl(X')$ does not. 
\end{sublemma}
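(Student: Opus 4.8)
The plan is to read \ref{assum} as a case reduction and justify it in two steps. By \ref{notboth} we already know that $\cl(X')$ and $\cl(Y')$ cannot both contain $L$, so only two configurations survive: either neither closure contains $L$, or exactly one of them does. The second configuration is symmetric in $X'$ and $Y'$, since these sets enter the modular equation (\ref{eqmod}) symmetrically and the minimality was imposed on $|X'\cup Y'|$, which is also symmetric; hence, after interchanging their labels if necessary, I may take the set whose closure contains $L$ to be $Y'$, which is exactly \ref{assum}. The substantive content, then, is to eliminate the remaining configuration in which neither $\cl(X')$ nor $\cl(Y')$ contains $L$.

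To rule that configuration out, I would apply \ref{stranded} to each of $X'$ and $Y'$. Since $X', Y' \in {\mathcal F}$ while neither closure contains $L$, each set falls into one of the trichotomy cases (i)--(iii); in particular $\sqcap(X'\cap A, X'\cap B)$ and $\sqcap(Y'\cap A, Y'\cap B)$ are each $0$ or $1$, and on each side the relevant values of $\sqcap(\cdot,L)$ are pinned down by the presence or absence of special strands. Feeding these values into \ref{sums} and using $\gamma(A),\gamma(B)\ge 0$ gives, with the shorthand $s(S)=\sqcap(S\cap A,S\cap B)$,
$$s(X') + s(Y') \ge s(X'\cup Y') + s(X'\cap Y').$$
The decisive extra ingredient is that $X'\cap Y'\notin {\mathcal F}$, so $X'\cap Y'$ contains no special strand and $\cl(X'\cap Y')\not\supseteq L$; via \ref{p23} this constrains $s(X'\cap Y')$ and forces $X'\cap Y'$ to be strand-free on at least one of the two sides.

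Combining the trichotomy values with this inequality should yield a contradiction in one of two forms. Either the arithmetic exhibits a strictly smaller pair with the same defect, contradicting the minimality of $|X'\cup Y'|$; or, tracking the surviving special strands through \ref{equiv} and \ref{equiv2}, it produces an $A$-strand $A_1$ and a $B$-strand $B_1$, distinct from $A_0$ and $B_0$, with exactly two of $\sqcap(A_0,B_1),\sqcap(A_1,B_0),\sqcap(A_1,B_1)$ equal to one, which is precisely the V\'amos-like obstruction forbidden by the hypothesis of Theorem~\ref{bigun}. I expect the main difficulty to be the bookkeeping across the nine pairings of cases (i)--(iii) for $X'$ and $Y'$: in each pairing one must verify that the values read off from \ref{stranded} are compatible with $X'\cap Y'\notin{\mathcal F}$, and that the strand configuration produced is a genuine forbidden one rather than one that collapses back onto $A_0$ and $B_0$.
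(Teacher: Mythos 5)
In the paper, \ref{assum} is not a proved claim at all: it is introduced by ``We shall now assume that,'' i.e.\ it is a case hypothesis. After \ref{notboth} the paper splits into two cases --- the case \ref{assum} (exactly one closure contains $L$, the labels chosen by symmetry) and the case in which neither $\cl(X')$ nor $\cl(Y')$ contains $L$, which is not eliminated at this point but is treated separately at the very end of the proof (``It remains to treat the case when neither $X'$ nor $Y'$ spans $L$''), where it too ends in a contradiction. Your reading --- that \ref{assum} can be established outright by ruling out the ``neither'' configuration --- is logically legitimate, and your reduction is correct: \ref{notboth} kills the ``both'' configuration, and the conditions defining the minimal pair $\{X',Y'\}$ (membership of $X'$, $Y'$, $X'\cup Y'$ in ${\mathcal F}$, $X'\cap Y'\notin{\mathcal F}$, the modular relation (\ref{eqmod}), minimality of $|X'\cup Y'|$) are symmetric, so relabelling is harmless. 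But under your reading the elimination of the ``neither'' case \emph{is} the entire content of the statement, and that is exactly what your proposal does not carry out.

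That elimination is the most intricate part of the paper's proof, and your sketch of it has a genuine gap: ``Feeding these values into \ref{sums} $\ldots$ should yield a contradiction in one of two forms'' is a plan, not a derivation. What the paper actually does there is: first, prove that $\sqcap(X'\cap A,X'\cap B)=1=\sqcap(Y'\cap A,Y'\cap B)$ (its \ref{allones}), which in one subcase requires writing a restriction of $M$ as a $2$-sum and using Lemma~\ref{elem} together with Lemma~\ref{equiv2} to manufacture a special strand inside $X'\cap Y'$; second, show that all four sets $X'\cap A$, $X'\cap B$, $Y'\cap A$, $Y'\cap B$ contain special strands (again via Lemma~\ref{equiv2}); and third, use the step ``$X'\cap A$ and $Y'\cap B$ both contain special strands, so $\sqcap(X'\cap A,Y'\cap B)\ge 1$,'' which is where the theorem's no-V\'amos hypothesis enters --- in the form of completeness of the bunch of special strands (Lemma~\ref{complete}), equivalent to but packaged differently from the forbidden-pair contradiction you predict. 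None of these steps appears in your proposal, and your one concrete intermediate assertion --- that $X'\cap Y'\notin{\mathcal F}$ ``forces $X'\cap Y'$ to be strand-free on at least one of the two sides'' --- does not follow from what you cite: $X'\cap Y'\notin{\mathcal F}$ says only that $X'\cap Y'$ contains no \emph{special} strand and does not span $L$; it could a priori contain non-special strands on both sides. So the proposal frames the reduction correctly but leaves the substantive part of the statement unproved.
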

By symmetry, we may also assume that (i) or (ii) of \ref{stranded} holds. Thus 
$$\sqcap(X'\cap A,L) = 1$$ and $X' \cap A$ contains a special strand.  Of course, $A_0$ and $B_0$ are special strands.  

\begin{sublemma}
\label{y'a3}
If  $\{X' \cap A, Y' \cap A\}$ is a modular pair   and 
$\sqcap(Y'\cap A,L) = 1$, then 
\begin{itemize}
\item[(i)] 
$\sqcap((X'\cup Y') \cap A,L) = 2$; and 
\item[(ii)] $\sqcap(((X'\cup Y')   \cap A) \cup L, (X'\cup Y')   \cap B) =  \sqcap((X'\cup Y') \cap A,(X'\cup Y') \cap B)$. 
\end{itemize}
\end{sublemma}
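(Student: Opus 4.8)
The plan is to prove the two conclusions of \ref{y'a3} by computing local connectivities with $L$, exploiting that $\cl(Y')\supseteq L$ forces $\sqcap(Y'\cap A, L)$ to interact well with the modular pair hypothesis. First I would establish (i). Since $\cl(Y')$ contains $L$ and $Y'\cap A \subseteq Y'$, I would use \ref{p23} together with the assumption $\sqcap(Y'\cap A, L)=1$; combined with $\sqcap(X'\cap A, L)=1$ (which holds under \ref{assum} and case (i)/(ii) of \ref{stranded}), I would apply Lemma~\ref{subeq} to the modular pair $\{X'\cap A, Y'\cap A\}$ with $Y=L$, giving
\begin{equation*}
\sqcap(X'\cap A,L) + \sqcap(Y'\cap A,L) \le \sqcap((X'\cup Y')\cap A,L) + \sqcap((X'\cap Y')\cap A,L).
\end{equation*}
The left side is $2$; I would then argue that $\sqcap((X'\cap Y')\cap A,L)$ is small (it is at most $1$, and I expect to force it to $0$ using that $X'\cap Y'\notin{\mathcal F}$ and $\cl(X')\not\supseteq L$) so that $\sqcap((X'\cup Y')\cap A,L)\ge 2$; since this quantity is at most $\sqcap(A,L)=2$, equality gives (i).

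Next I would derive (ii) from (i). Conclusion (ii) says that adjoining $L$ to $(X'\cup Y')\cap A$ does not change its local connectivity with $(X'\cup Y')\cap B$. The natural tool is Lemma~\ref{PQR} applied with $P=(X'\cup Y')\cap A$, $Q=(X'\cup Y')\cap B$, and $R=L$:
\begin{equation*}
\sqcap(P\cup L,Q) + \sqcap(P,L) = \sqcap(P\cup Q,L) + \sqcap(P,Q).
\end{equation*}
By (i), $\sqcap(P,L)=2$. I would then show $\sqcap(P\cup Q,L)=2$ as well, using \ref{p40} or \ref{p16.8} to control the $A$-$B$ interaction on the union against $L$, and observe that $\sqcap(P\cup L,Q)$ is squeezed between $\sqcap(P,Q)$ and $\sqcap(A\cup L,Q)=\sqcap(A,Q)$. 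Substituting $\sqcap(P,L)=\sqcap(P\cup Q,L)=2$ into the identity immediately yields $\sqcap(P\cup L,Q)=\sqcap(P,Q)$, which is precisely (ii).

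The main obstacle I anticipate is the intermediate claim that $\sqcap((X'\cap Y')\cap A,L)=0$ (or at least that it is not $1$) needed to push $\sqcap((X'\cup Y')\cap A,L)$ up to $2$ in part (i). This is where the hypothesis $X'\cap Y'\notin{\mathcal F}$ must be used: if $(X'\cap Y')\cap A$ carried a full unit of connectivity to $L$, one could try to run the argument of \ref{strand}/\ref{stranded} on $X'\cap Y'$ to show $X'\cap Y'\in{\mathcal F}$, contradicting the minimal counterexample. Getting this contradiction cleanly — keeping track of which side contains a special strand and invoking the correct case of \ref{stranded} for the smaller set — is the delicate point; everything else reduces to routine applications of Lemmas~\ref{PQR} and~\ref{subeq} and the already-established sublemmas \ref{p23}, \ref{p40}, and \ref{p16.8}.
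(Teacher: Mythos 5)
Your derivation of (ii) from (i) via Lemma~\ref{PQR} is exactly the paper's argument and is fine (that $\sqcap$ of the union with $L$ equals $2$ is just monotonicity from (i), since local connectivity with $L$ is capped at $r(L)=2$). The problem is in (i), at precisely the point you flag as delicate: your Lemma~\ref{subeq} inequality does yield (i) once $\sqcap((X'\cap Y')\cap A,L)=0$, but the condition $X'\cap Y'\notin{\mathcal F}$ does not by itself force this. If $\sqcap((X'\cap Y')\cap A,L)=1$, then by \ref{p23} the set $X'\cap Y'\cap A$ contains an $A$-strand, but a priori that strand could be \emph{non-special}, which is perfectly consistent with $X'\cap Y'\notin{\mathcal F}$ (membership in ${\mathcal F}$ requires a special strand or spanning $L$). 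Moreover, your suggested repair, ``run the argument of \ref{strand}/\ref{stranded} on $X'\cap Y'$,'' cannot be executed as stated: \ref{stranded} is a trichotomy for sets \emph{in} ${\mathcal F}$, and $X'\cap Y'$ is not in ${\mathcal F}$. The missing idea is the transfer of specialness through $B_0$: since $X'\cap A$ contains a special strand and the bunch of special strands is complete (the theorem's hypothesis together with Lemma~\ref{complete}), we get $\sqcap(X'\cap A,B_0)\ge 1$; then \ref{p16.8} applied to $(X'\cap A)\cup B_0$ forces $\sqcap((X'\cap A)\cup B_0,L)=1$, hence $\sqcap(((X'\cap Y')\cap A)\cup B_0,L)=1$; and now \ref{strand} with the roles of $A$ and $B$ interchanged (taking the $B$-side set to be $B_0$) shows that $X'\cap Y'\cap A$ contains a special strand, so $X'\cap Y'\in{\mathcal F}$, the contradiction you want. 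Without this $B_0$-bridging step the proposal does not close.

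For comparison, the paper proves (i) by contradiction rather than directly: assuming $\sqcap((X'\cup Y')\cap A,L)=1$, it runs the same $B_0$/\ref{p16.8}/\ref{strand} mechanism to show that $Y'\cap A$ contains a special strand, and then contradicts the \emph{minimal} choice of $\{X',Y'\}$: since $\sqcap(Y',L)=2$ forces $Y'\cap B\neq\emptyset$, the pair $\{X'\cap A,Y'\cap A\}$ has strictly smaller union with $X'\cap A$, $Y'\cap A$, and $(X'\cup Y')\cap A$ all in ${\mathcal F}$, whence $X'\cap Y'\cap A\in{\mathcal F}$ and so $X'\cap Y'\in{\mathcal F}$. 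Your route, once repaired as above, is genuinely different in structure --- it uses submodularity (Lemma~\ref{subeq}) plus only the defining property $X'\cap Y'\notin{\mathcal F}$, not the minimality of $|X'\cup Y'|$ --- and would be a slightly more self-contained proof of this sublemma. But the strand-transfer through $B_0$ is the heart of the matter in either version, and it is exactly the piece your sketch leaves unproved.
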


Assume $\sqcap((X'\cup Y') \cap A,L) < 2$. Then, since $\sqcap(Y'\cap A, L) = 1$, we see that 
$\sqcap((X'\cup Y')\cap A, L) = 1$. Now, by \ref{p16.8}, 
$$\sqcap(((X' \cup Y') \cap A) \cup B_0, L) + \sqcap((X' \cup Y') \cap A,B_0) =  \sqcap((X' \cup Y') \cap A,L) + \sqcap(B_0,L) = 2.$$
Since $X' \cap A$ contains a special strand, $\sqcap((X' \cup Y') \cap A,B_0) \ge 1$. We deduce, since 
$\sqcap(((X' \cup Y') \cap A) \cup B_0, L) \ge 1$, that $\sqcap((X' \cup Y') \cap A,B_0) = 1 = \sqcap(((X' \cup Y') \cap A) \cup B_0, L)$. 

By \ref{p16.8} again,  
$$\sqcap((Y' \cap A) \cup B_0,L) + \sqcap(Y' \cap A, B_0) = \sqcap(B_0,L) + \sqcap(Y' \cap A,L) = 2.$$
As each term on the left-hand side is at most one, we deduce that each equals one. Thus, by \ref{strand}, $Y'\cap A$ contains a special strand, so $Y' \cap A \in {\mathcal F}$. 

As $\sqcap(Y',L) = 2$, it follows  that $Y'\cap B \neq \emptyset$. Thus $|(X' \cup Y') \cap A| < |X' \cup Y'|$ and we get a contradiction to the choice of $\{X',Y'\}$ since $X' \cap Y'\cap A$, and hence $X' \cap Y'$,   is in ${\mathcal F}$. We conclude that $\sqcap((X'\cup Y') \cap A,L) =2$, so \ref{y'a3}(i) holds. 

 By Lemma~\ref{PQR}, 
\begin{align*}
\sqcap(((X' \cup Y') \cap A) \cup L, (X' \cup Y') \cap B) & + \sqcap ((X' \cup Y') \cap A,L)\\ 
& = \sqcap(X' \cup Y', L)\\
 & \hspace*{0.5in}+ \sqcap((X' \cup Y') \cap A,(X' \cup Y') \cap B).
\end{align*}
By (i), $\sqcap ((X' \cup Y') \cap A,L) = 2$, so 
$$\sqcap(((X' \cup Y') \cap A) \cup L, (X' \cup Y') \cap B)   =  \sqcap((X' \cup Y') \cap A,(X' \cup Y') \cap B),$$
that is,  \ref{y'a3}(ii) holds.

\begin{sublemma}
\label{y'a0}
If  $\{X' \cap A, Y' \cap A\}$ is a modular pair, then 
$\sqcap(Y' \cap A, L) = 1$. 
\end{sublemma}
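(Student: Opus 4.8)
The plan is to pin $\sqcap(Y'\cap A,L)$ to $1$ by excluding the values $0$ and $2$; since $Y'\cap A\subseteq A$ and $\sqcap(A,L)=2$, monotonicity of local connectivity leaves only these three possibilities. I will use freely that $\sqcap(X'\cap A,L)=1$ and that $X'\cap A$ contains a special strand (from \ref{assum} and the case analysis following it), that $\{X'\cap A,Y'\cap A\}$ is modular (the hypothesis, i.e.\ $\gamma(A)=0$), and that every subset of $X'\cap Y'$ lies outside $\mathcal F$, since such a set containing a special strand or spanning $L$ would force the same of $X'\cap Y'$. The workhorse for the two extreme values is the following observation: \emph{if $S\subseteq A$ satisfies $\sqcap(S,B)=1$ and contains both a special $A$-strand $A_1$ and a non-special $A$-strand $A_2$, then we reach a contradiction}, together with its $A$--$B$ symmetric form. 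To prove it, note $\sqcap(A_1\cup A_2,B)=1$, so $M|((A_1\cup A_2)\cup B)$ is a $2$-sum with basepoint $q$ and $A$-side matroid on $(A_1\cup A_2)\cup q$. As $A_1$ and $A_2$ are strands, $A_1\cup q$ and $A_2\cup q$ are circuits on the $A$-side; letting $B_1$ be the special partner of $A_1$, the circuit $A_1\cup B_1$ (Lemma~\ref{equiv}) shows $B_1\cup q$ is a circuit on the $B$-side. Gluing $A_2\cup q$ with $B_1\cup q$ yields the circuit $A_2\cup B_1$, so $\sqcap(A_2,B_1)=1$ and $A_2$ is adjacent in the strand graph to the special strand $B_1$, making $A_2$ special --- the required contradiction.

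To rule out $\sqcap(Y'\cap A,L)=2$: then $\cl(Y'\cap A)\supseteq L$, so $Y'\cap A\in\mathcal F$, and $\{X'\cap A,Y'\cap A\}$ has $X'\cap A$, $Y'\cap A$ and $(X'\cup Y')\cap A$ in $\mathcal F$ but $X'\cap Y'\cap A\notin\mathcal F$. By the minimality of $|X'\cup Y'|$ this forces $(X'\cup Y')\cap A=X'\cup Y'$, i.e.\ $X'\cup Y'\subseteq A$. Then $\sqcap(X',B)=\sqcap(X',L)=1$, and Lemma~\ref{subeq} applied to the modular pair $\{X',Y'\}$ (see~(\ref{eqmod})) with $L$ forces $\sqcap(X'\cap Y',L)=1$ (the lower bound comes from $\sqcap(Y',L)=2$, the upper bound from $X'\cap Y'\notin\mathcal F$); hence $\sqcap(X'\cap Y',B)=1$ by \ref{p23}, so $X'\cap Y'$ contains an $A$-strand $A_2$, necessarily non-special. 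Since $X'$ also contains the special strand and $\sqcap(X',B)=1$, the observation with $S=X'$ gives a contradiction.

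To rule out $\sqcap(Y'\cap A,L)=0$: applying \ref{p16.8} to $Y'$ with $\sqcap(Y',L)=2$ forces $\sqcap(Y'\cap B,L)=2$ and $\sqcap(Y'\cap A,B)=0$, so $Y'\cap B\in\mathcal F$ while $Y'\cap A\notin\mathcal F$. The pair $\{X',Y'\cap B\}$ meets the four defining conditions (the three larger sets in $\mathcal F$, the intersection out), so minimality forces $Y'\cap A\subseteq X'$; using $\sqcap(Y'\cap A,L)=0$, the identity \ref{sums} (with $\gamma(A)=0$) then collapses to give $\gamma(B)=0$, so $\{X'\cap B,Y'\cap B\}$ is modular, and $\sqcap(X'\cap A,(X'\cup Y')\cap B)=\sqcap(X'\cap A,X'\cap B)$. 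In case~(i) of \ref{stranded}, Lemma~\ref{subeq} on the $B$-side yields $\sqcap(X'\cap Y'\cap B,L)=1$, producing a non-special $B$-strand inside $X'\cap B$, which contains a special $B$-strand and has $\sqcap(X'\cap B,A)=1$; the symmetric observation then gives a contradiction. In case~(ii), a further reduction using $\{X'\cap A,Y'\}$ forces $X'\cap B\subseteq Y'$, whence $(X'\cup Y')\cap B=Y'\cap B$ and the displayed equality becomes $\sqcap(X'\cap A,Y'\cap B)=\sqcap(X'\cap A,X'\cap B)=0$; but $\cl(Y'\cap B)\supseteq L$ gives $\sqcap(X'\cap A,Y'\cap B)\ge\sqcap(X'\cap A,L)=1$, a contradiction.

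The main obstacle is twofold: setting up the $2$-sum observation correctly, and navigating the degenerate configurations forced in the case $\sqcap(Y'\cap A,L)=0$. The minimality reductions shrink $|X'\cup Y'|$ only when certain containments fail, so one is repeatedly pushed to the boundary (first $Y'\cap A\subseteq X'$, then $X'\cap B\subseteq Y'$), where the contradiction must be squeezed either from the strand/$2$-sum argument or from the exact identity \ref{sums}. Verifying at each step that the replacement pairs stay in $\mathcal F$, and that the relevant intersection stays out, is routine but easy to mishandle.
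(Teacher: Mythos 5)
Your case $\sqcap(Y'\cap A,L)=2$ is correct, and your $2$-sum ``workhorse'' observation is sound; it gives a genuinely different (and arguably cleaner) endgame than the paper's, which instead uses $\sqcap(X',B_0)=1$ together with Corollary~\ref{subeq2} and \ref{strand} to force a \emph{special} strand into $X'\cap Y'$. The gap is in the case $\sqcap(Y'\cap A,L)=0$, in your two reduction steps. There you apply the minimality of $|X'\cup Y'|$ to the pairs $\{X',Y'\cap B\}$ and $\{X'\cap A,Y'\}$ after checking only the four conditions $X',Y',X'\cup Y'\in{\mathcal F}$, $X'\cap Y'\notin{\mathcal F}$. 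But the minimization cannot be over pairs satisfying only those conditions: the chosen pair $\{X',Y'\}$ must also satisfy the modularity~(\ref{eqmod}) --- you rely on this yourself when you apply Lemma~\ref{subeq} to ``the modular pair $\{X',Y'\}$'' in the other case, and every appeal to \ref{sums} is an appeal to~(\ref{eqmod}). A submodularity violation supplies a pair satisfying the four ${\mathcal F}$-conditions \emph{and}~(\ref{eqmod}); a pair satisfying the four conditions alone need not be modular, so the minimum must be taken over pairs satisfying both, and minimality can then only be contradicted by exhibiting a \emph{smaller modular} pair. The paper is scrupulous about this: its comparison pairs are always $\{X'\cap A,Y'\cap A\}$ (modular by the hypothesis of \ref{y'a0}) or $\{X'\cap B,Y'\cap B\}$ (whose modularity it first extracts from \ref{sums}). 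Your pairs $\{X',Y'\cap B\}$ and $\{X'\cap A,Y'\}$ are never shown to be modular, so ``minimality forces $Y'\cap A\subseteq X'$'' and ``forces $X'\cap B\subseteq Y'$'' are non sequiturs.

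The good news is that both reductions are unnecessary, so your argument can be repaired. With $\gamma(A)=0$ (your hypothesis) and $\sqcap(Y'\cap A,Y'\cap B)=0$ (which you correctly derived from \ref{p16.8}), the identity \ref{sums} consists of non-negative terms summing to zero, so it already forces $\gamma(B)=0$, $\sqcap(X'\cap Y'\cap A,X'\cap Y'\cap B)=0$, and $\sqcap((X'\cup Y')\cap A,(X'\cup Y')\cap B)=\sqcap(X'\cap A,X'\cap B)$, with no containment $Y'\cap A\subseteq X'$ needed. In case (i) of \ref{stranded}, your argument (Lemma~\ref{subeq} on the $B$-side, then the symmetric workhorse observation applied to $X'\cap B$) then goes through verbatim. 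In case (ii), instead of $\sqcap(X'\cap A,Y'\cap B)=0$ you get $\sqcap((X'\cup Y')\cap A,(X'\cup Y')\cap B)=0$, which contradicts $\sqcap((X'\cup Y')\cap A,(X'\cup Y')\cap B)\ge\sqcap(X'\cap A,Y'\cap B)\ge\sqcap(X'\cap A,L)=1$, using that $Y'\cap B$ spans $L$ --- essentially the contradiction the paper reaches via \ref{p16.8}. So the proof is salvageable, but as written the two minimality steps are invalid inferences, exactly the kind of bookkeeping you flagged as ``easy to mishandle.''
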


Assume that $\sqcap(Y' \cap A, L) = 2$. Then all of 
$X' \cap A, Y'\cap A$, and $(X' \cup Y') \cap A$ are in ${\mathcal F}$. Hence, by the choice of $\{X',Y'\}$, we see that 
$(X'\cup Y') \cap B = \emptyset$ otherwise $X' \cap Y' \cap A$, and hence $X' \cap Y'$, is in ${\mathcal F}$; a contradiction.  

Now $X' \cap A = X'$ and $Y' \cap A = Y'$. As $X'$ contains a special strand, $\sqcap(X',B_0) = 1$.  By Lemma~\ref{PQR}, 
$$\sqcap (Y' \cup B_0,L) + \sqcap(Y',B_0) = \sqcap (B_0 \cup L,Y') + \sqcap(B_0,L).$$ 
Since $\sqcap(Y',L) = 2$, it follows that $\sqcap(Y',B_0) = 1$. 

As $1 = \sqcap(X',B_0) \le \sqcap(X' \cup Y',B_0) \le \sqcap(A,B_0) = 1,$ Corollary~\ref{subeq2} implies that $\sqcap(X' \cap Y',B_0) = \sqcap(Y',B_0) = 1$. 
Thus $1 \le \sqcap(X' \cap Y',L) \le \sqcap(X',L) = 1$. By \ref{p16.8}, 
$\sqcap((X' \cap Y') \cup B_0,L) + \sqcap(X' \cap Y',B_0) = \sqcap(X' \cap Y',L) + \sqcap(B_0,L),$ so $\sqcap((X' \cap Y') \cup B_0,L) = 1$. Thus, by \ref{strand}, $X' \cap Y'$ contains a special strand; a contradiction. We conclude that $\sqcap(Y' \cap A, L) < 2.$

Now assume that  $\sqcap(Y'\cap A,L) = 0$. By \ref{p16.8},
$$\sqcap(Y',L) + \sqcap(Y' \cap A,Y' \cap B) = \sqcap(Y' \cap A,L) + \sqcap(Y'\cap B,L).$$
Hence $\sqcap(Y'\cap B,L) = 2$ and $\sqcap(Y' \cap A,Y' \cap B) = 0$.  By  interchanging the terms 
$\sqcap(Y' \cap A, Y'\cap B)$ and $\sqcap(X' \cap A, X'\cap B)$ in \ref{sums}, we see that $\{X' \cap B, Y' \cap B\}$ is a modular pair. As $X' \cap A$ contains a special strand, it is non-empty. Therefore  $X' \cap B \not\in {\mathcal F}$ otherwise   $X' \cap Y' \cap B$, and hence $X' \cap Y'$, is in ${\mathcal F}$; a contradiction. Thus $X' \cap B$ does not contain a special strand. Hence (ii) of \ref{stranded} holds, so $\sqcap(X' \cap A, X'\cap B) = 0$.
Thus, by \ref{sums}, 
$\sqcap((X' \cup Y') \cap A, (X' \cup Y') \cap B) = 0.$ By \ref{p16.8} again, 
\begin{align*}
2 & = 2 +  \sqcap((X' \cup Y') \cap A, (X' \cup Y') \cap B)\\
& =  \sqcap((X' \cup Y') \cap A, L) + \sqcap((X' \cup Y') \cap B,L)\\
& \ge \sqcap(X' \cap A, L) + \sqcap(Y' \cap B,L)\\
& = 1 + 2.
\end{align*}
This contradiction implies that $\sqcap(Y' \cap A, L) = 1$, that is, \ref{y'a0} holds. 


\begin{sublemma}
\label{notii}
Case (i) of \ref{stranded} must hold.
\end{sublemma}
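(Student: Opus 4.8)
The goal is to rule out cases (ii) and (iii) of \ref{stranded} under the standing assumption \ref{assum} (that $\cl(Y')\supseteq L$ but $\cl(X')\not\supseteq L$), thereby forcing case (i), in which both $X'\cap A$ and $X'\cap B$ contain special strands. By the symmetry already invoked after \ref{assum}, we have reduced to case (i) or case (ii), so the plan is to \emph{derive a contradiction from case (ii)}. In case (ii), $X'\cap A$ contains a special strand, $X'\cap B$ contains no strand, and
\[
\sqcap(X'\cap A,L)=1,\qquad \sqcap(X'\cap B,L)=0=\sqcap(X'\cap A,X'\cap B).
\]

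The strategy is to feed this into the two preceding sublemmas \ref{y'a0} and \ref{y'a3}, which presuppose that $\{X'\cap A,Y'\cap A\}$ is a modular pair. So the first step is to \textbf{establish that $\{X'\cap A,Y'\cap A\}$ is a modular pair.} I expect this to follow from \ref{sums}: since case (ii) gives $\sqcap(X'\cap A,X'\cap B)=0$ and I will want to track the other three local-connectivity terms in that identity, nonnegativity of $\gamma(A)$ and $\gamma(B)$ combined with $\sqcap(X'\cap B,L)=0$ (hence $X'\cap B$ contributes trivially through \ref{p16.8}) should force $\gamma(B)=0$ and the bracketed circuit-terms to cancel, which is exactly the statement that one of the coordinate projections is modular. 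Concretely, because $X'\cap B$ contains no strand, $\sqcap(X'\cap B,A)=0$, and via \ref{p23} this propagates to control the $B$-side ranks, collapsing $\gamma(B)$ and pinning down the modularity of $\{X'\cap A, Y'\cap A\}$.

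Once modularity of $\{X'\cap A,Y'\cap A\}$ is in hand, \ref{y'a0} gives $\sqcap(Y'\cap A,L)=1$, and then \ref{y'a3} gives both $\sqcap((X'\cup Y')\cap A,L)=2$ and the projection identity \ref{y'a3}(ii). The \textbf{main obstacle} I anticipate is the final bookkeeping: I must combine \ref{y'a3}(i)--(ii) with \ref{p16.8} applied to $X'\cup Y'$ to compute $r((X'\cup Y')\cup p)$ and show that $X'\cup Y'$ behaves inconsistently with the membership data $X',Y'\in\mathcal F$ and $X'\cap Y'\notin\mathcal F$. The likely endgame is to show that $(X'\cup Y')\cap A$ already spans $L$ (from \ref{y'a3}(i)), so that $X'\cup Y'\in\mathcal F$ is automatic, and then to run the minimality-of-$|X'\cup Y'|$ argument exactly as in the proofs of \ref{y'a0} and \ref{y'a3}: replacing $\{X',Y'\}$ by $\{(X'\cup Y')\cap A,\,Y'\cap A\}$ or a similar smaller modular pair that still has its intersection outside $\mathcal F$, contradicting minimality. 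I would close by noting that case (iii) is excluded a priori by the symmetry reduction already made, so ruling out (ii) suffices to prove \ref{notii}.
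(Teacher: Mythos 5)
Your opening moves match the paper exactly: under \ref{assum} and the symmetry reduction to cases (i)/(ii) of \ref{stranded}, you assume case (ii), note $\sqcap(X'\cap A,X'\cap B)=0$, and use \ref{sums} (all terms there being non-negative and summing to zero) to conclude that $\{X'\cap A,Y'\cap A\}$ is a modular pair; then \ref{y'a0} gives $\sqcap(Y'\cap A,L)=1$ and \ref{y'a3} applies. Up to that point the plan is sound, though note that it is $\gamma(A)=0$, not $\gamma(B)=0$, that encodes the modularity you need, and it falls out of pure non-negativity in \ref{sums} rather than any propagation through \ref{p23} or \ref{p16.8}.

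The gap is the endgame. A minimality argument needs a pair $\{U,V\}$ with $U,V,U\cup V\in{\mathcal F}$, $U\cap V\notin{\mathcal F}$, $\{U,V\}$ modular, and $|U\cup V|<|X'\cup Y'|$. Your proposed pair $\{(X'\cup Y')\cap A,\,Y'\cap A\}$ is nested: its union is $(X'\cup Y')\cap A$ and its intersection is $Y'\cap A$ itself, so contradicting minimality would require $Y'\cap A\in{\mathcal F}$ (to qualify as a member of the pair) and $Y'\cap A\notin{\mathcal F}$ (to be the offending intersection) simultaneously. The same obstruction blocks the natural candidate $\{X'\cap A,Y'\cap A\}$: in case (ii) there is no way to place $Y'\cap A$ in ${\mathcal F}$, since \ref{y'a0} gives $\sqcap(Y'\cap A,L)=1\neq 2$ and one cannot show $Y'\cap A$ contains a special strand --- the sub-case in which it does is exactly the one that the proof of \ref{y'a3} has already eliminated by minimality. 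This is why the paper abandons minimality at this point and closes with a numerical contradiction instead: \ref{sums} (with $\sqcap(X'\cap A,X'\cap B)=0$) combined with \ref{y'a3}(ii) yields $\sqcap(((X'\cup Y')\cap A)\cup L,(X'\cup Y')\cap B)=\sqcap(Y'\cap A,Y'\cap B)$; applying \ref{p16.8} to $Y'$, using $\sqcap(Y',L)=2$ and $\sqcap(Y'\cap A,L)=1$, gives $\sqcap(Y'\cap A,Y'\cap B)=\sqcap(Y'\cap B,L)-1$; and monotonicity gives $\sqcap(L,Y'\cap B)\le\sqcap(((X'\cup Y')\cap A)\cup L,(X'\cup Y')\cap B)$, whence $\sqcap(Y'\cap B,L)\le\sqcap(Y'\cap B,L)-1$, a contradiction. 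That computation --- not a smaller bad pair --- is the missing idea in your proposal.
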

 
Assume instead that \ref{stranded}(ii) holds. Then $\sqcap(X'\cap A,X'\cap B) = 0$ so the second square-bracketed term in \ref{sums} is $0$.  Since the other square-bracketed term is non-negative as are each of $\gamma(A)$ and $\gamma(B)$, we deduce that $\gamma(A) = 0$. Thus $\{X'\cap A,Y' \cap A\}$ is a modular pair. Hence, by \ref{y'a0}, $\sqcap(Y' \cap A, L) = 1$.


Now, by \ref{y'a3}(ii), $\sqcap(((X'\cup Y')   \cap A) \cup L, (X'\cup Y')   \cap B) =  \sqcap((X'\cup Y') \cap A,(X'\cup Y') \cap B)$. Thus, by \ref{sums}, 
as $\sqcap(X' \cap A,X' \cap B) = 0$, 
\begin{equation}
\label{eqcross}
\sqcap(((X'\cup Y')   \cap A) \cup L, (X'\cup Y')   \cap B) =  \sqcap(Y' \cap A,Y' \cap B).
\end{equation}

By \ref{p16.8}, $\sqcap(Y',L) + \sqcap(Y'\cap A,Y'\cap B) = \sqcap(Y'\cap A,L) + \sqcap(Y'\cap B, L),$ so 
$2 +  \sqcap(Y'\cap A,Y'\cap B) = 1 + \sqcap(Y'\cap B, L),$ that is, 
$ \sqcap(Y'\cap A,Y'\cap B) =  \sqcap(Y'\cap B, L) -1.$ Therefore, by (\ref{eqcross}), 
$$\sqcap(L, Y'\cap B) \le \sqcap(((X'\cup Y')   \cap A) \cup L, (X'\cup Y')   \cap B) = \sqcap(Y' \cap B, L) - 1;$$
a contradiction. We conclude that \ref{notii} holds.

We now know that both $X'\cap A$ and $X' \cap B$ contain special strands and $\sqcap(X' \cap A, X' \cap B) = 1$. We have symmetry between $A$ and $B$ so, by \ref{sums}, we may assume that $\{X' \cap A, Y' \cap A\}$ is a modular pair. Thus, by \ref{y'a0}, 
$\sqcap(Y'\cap A,L) = 1$.

Next we observe, by \ref{p16.8}, that either 
\begin{itemize}
\item[(a)] $\sqcap(Y'\cap B,L) = 2$ and $\sqcap(Y'\cap A,Y' \cap B) = 1$; or 
\item[(b)] $\sqcap(Y'\cap B,L) = 1$ and $\sqcap(Y'\cap A,Y' \cap B) = 0$.
\end{itemize}

Suppose (a) holds. Then $Y' \cap B$ and $X' \cap B$ are in  ${\mathcal F}$. Hence $\{X'\cap B, Y' \cap B\}$ is not a modular pair otherwise we obtain the contradiction that $X' \cap Y' \cap B$, and hence $X' \cap Y'$, is in ${\mathcal F}$. Thus $\gamma(B) \ge 1$. By \ref{y'a3}(ii), 
$\sqcap((X' \cup Y')\cap A, (X' \cup Y')\cap B) \ge \sqcap(L,Y' \cap B) = 2$. Since $\sqcap(Y'\cap A,Y' \cap B) = 1 = \sqcap(X'\cap A,X' \cap B)$, we obtain a contradiction by \ref{sums}. We deduce that (b) holds. 

As  $\sqcap(Y'\cap A,Y' \cap B) = 0$, using \ref{sums} again with the terms  $\sqcap(Y' \cap A,Y' \cap B)$ and  $\sqcap(X' \cap A,X' \cap B)$ interchanged, we see that, as $\sqcap(X' \cap A, X' \cap B) = 1$, that  $$\sqcap((X' \cup Y')\cap A,(X' \cup Y')\cap B) = 1$$ and $\{X'\cap B, Y'\cap B\}$ is a modular pair. 
 We may now apply \ref{y'a3} interchanging $A$ and $B$ to get that 
 $\sqcap((X' \cup Y') \cap B, L) = 2$. By \ref{p16.8}, 
 $$\sqcap(X' \cup Y',L) + \sqcap((X' \cup Y') \cap A,(X' \cup Y') \cap B) = \sqcap((X' \cup Y') \cap A,L) + \sqcap((X' \cup Y') \cap B,L).$$
It follows that $\sqcap((X' \cup Y') \cap A,(X' \cup Y') \cap B) = 2$. This contradiction completes the argument  that $r$ is a matroid rank function when \ref{assum} holds.
 
 It remains to treat the case when neither $X'$ nor $Y'$ spans $L$. Then each of $X'$ and $Y'$ contains a special strand. By \ref{p40}, 
 $\sqcap(X'\cap A,X'\cap B) \le 1$ and  $\sqcap(Y'\cap A,Y'\cap B) \le 1$. If either 
 $\sqcap(X'\cap A,X'\cap B)$ or $\sqcap(Y'\cap A,Y'\cap B)$ is $0$, then, by \ref{sums} and symmetry, both 
 $\{X'\cap A,Y'\cap A\}$ and $\{X'\cap B,Y'\cap B\}$ are modular pairs.
 
 \begin{sublemma}
\label{allones}
$\sqcap(X'\cap A,X'\cap B) = 1 = \sqcap(Y'\cap A,Y'\cap B)$.
\end{sublemma}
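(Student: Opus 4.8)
The plan is to argue by contradiction. If \ref{allones} fails then, since both quantities are at most one by \ref{p40}, at least one of them is zero; using the symmetry between $X'$ and $Y'$, I would assume $\sqcap(X'\cap A,X'\cap B)=0$. As recorded just before the statement, \ref{sums} then forces $\gamma(A)=\gamma(B)=0$, so both $\{X'\cap A,Y'\cap A\}$ and $\{X'\cap B,Y'\cap B\}$ are modular pairs; moreover, since every bracketed term in \ref{sums} is nonnegative and they sum to zero, I also get $\sqcap((X'\cup Y')\cap A,(X'\cup Y')\cap B)=\sqcap(Y'\cap A,Y'\cap B)$. Because $\cl(X')\not\supseteq L$ and $\sqcap(X'\cap A,X'\cap B)=0$, \ref{stranded} leaves only cases (ii) and (iii) for $X'$, and by the symmetry between $A$ and $B$ I would assume case (ii): $X'\cap A$ contains a special strand $A_1$ (so $X'\cap A\in{\mathcal F}$ and $\sqcap(X'\cap A,L)=1$), while $X'\cap B$ contains no strand. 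I would also note that $X'\cap Y'\cap A\notin{\mathcal F}$, since $X'\cap Y'\notin{\mathcal F}$ means it contains no special strand and its closure omits $L$, and both properties pass to the subset $X'\cap Y'\cap A$.

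The key fact driving the argument is that all special strands meet $L$ in a single common point. I would establish this first. For a special $A$-strand $A'$ and special $B$-strand $B'$, the hypothesis of the theorem makes the bunch containing $A_0,B_0$ complete (Lemma \ref{complete}), so $\sqcap(A',B')=1$ and $A'\cup B'$ is a circuit by Lemma \ref{equiv}; a short rank computation inside $\cl(A'\cup B')$ shows $\cl(A')\cap\cl(B')$ is a single point, and it lies on $L$ because $\cl(A')\cap\cl(B')\subseteq\cl(A)\cap\cl(B)=L$ in $M''$. Since $\sqcap(A',L)=1=\sqcap(B',L)$ by \ref{p23}, this point equals $\cl(A')\cap L=\cl(B')\cap L$. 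Setting $w$ to be the point $\cl(A_0)\cap L=\cl(B_0)\cap L$, completeness then pins every special strand to pass through $w$.

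With $w$ in hand I would split on whether $(X'\cup Y')\cap B$ contains a special strand $B_2$. If it does, completeness gives $\sqcap((X'\cup Y')\cap A,(X'\cup Y')\cap B)\ge\sqcap(A_1,B_2)=1$, so $\sqcap(Y'\cap A,Y'\cap B)=1$ and $Y'$ lies in case (i) of \ref{stranded}; then $Y'\cap A\in{\mathcal F}$ and $Y'\cap B\neq\emptyset$, so $\{X'\cap A,Y'\cap A\}$ is a modular pair with all of $X'\cap A,\,Y'\cap A,\,(X'\cup Y')\cap A$ in ${\mathcal F}$ but $X'\cap Y'\cap A\notin{\mathcal F}$, and $|(X'\cup Y')\cap A|<|X'\cup Y'|$, contradicting the minimal choice of $\{X',Y'\}$. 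If $(X'\cup Y')\cap B$ contains no special strand, then neither does $Y'\cap B$, which forces $Y'$ into case (ii) as well; whenever $(X'\cup Y')\cap B\neq\emptyset$, the same restriction to $A$ again contradicts minimality.

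The remaining case, which I expect to be the hardest, is the degenerate one where $(X'\cup Y')\cap B=\emptyset$, that is, $X',Y'\subseteq A$; here no size reduction is available. Instead I would invoke Lemma \ref{elem}: each of $X'$ and $Y'$ contains a special strand and hence contains $w$ in its closure, so $w\in\cl(X')\cap\cl(Y')$, and modularity gives $w\in\cl(X'\cap Y')$. Thus $\sqcap(X'\cap Y',L)\ge1$, so by \ref{p23} the set $X'\cap Y'$ contains an $A$-strand $A_3$ meeting $L$ at $w$; since $w\in\cl(A_3)\cap\cl(B_0)$ gives $\sqcap(A_3,B_0)=1$, the strand $A_3$ is adjacent to $B_0$ and hence special, whence $X'\cap Y'\in{\mathcal F}$, a contradiction. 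The main obstacle throughout is organizing the case analysis so that, away from the degenerate situation, one always lands on a modular pair inside $A$ (or $B$) all of whose members lie in ${\mathcal F}$ while the intersection does not, and verifying the discarded side is nonempty; the all-in-$A$ case is precisely where this breaks down and where the common point $w$ together with Lemma \ref{elem} must take over.
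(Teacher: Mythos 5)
Your setup and the two minimality contradictions in the non-degenerate cases are sound: from $\sqcap(X'\cap A,X'\cap B)=0$ and \ref{sums} you correctly extract that both $\{X'\cap A,Y'\cap A\}$ and $\{X'\cap B,Y'\cap B\}$ are modular pairs and that $\sqcap((X'\cup Y')\cap A,(X'\cup Y')\cap B)=\sqcap(Y'\cap A,Y'\cap B)$, and your handling of the cases with $(X'\cup Y')\cap B\neq\emptyset$ works. The genuine gap is your ``key fact'' that, in $M''$, all special strands pass through a common point $w$ of $L$. No such element of $M''$ need exist. The rank computation you invoke only gives $r(\cl(A')\cap\cl(B'))\le \sqcap(A',B')=1$; it is an upper bound and cannot produce an element of that intersection. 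In a matroid, two lines with local connectivity one need not share a point --- this is exactly the V\'amos phenomenon motivating the paper, and the point you are positing is precisely the element $p$ whose existence the theorem is trying to prove, so assuming it is circular. Concretely, take $M = AG(3,2)$ with $(A,B)$ the two complementary planes: every strand is a $2$-element set whose closure in $M$ is itself, each bunch of strands is a complete $K_{2,2}$, so the hypothesis of Theorem~\ref{bigun} holds; yet because $x$ and $y$ are added \emph{freely} (via the minimal modular cut containing $\cl(A)$ and $\cl(B)$), neither lies in $\cl_{M''}(A')$ for any strand $A'$, so $\cl_{M''}(A')\cap L=\emptyset$. Freeness works against you here: it keeps $x$ and $y$ off every flat they are not forced into, rather than placing them on strand closures.

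This failure is fatal exactly in the case you yourself identify as the crux, $X',Y'\subseteq A$, where no size reduction is available and Lemma~\ref{elem} has no element to act on. The paper's device is to manufacture a proxy for $w$: since $X'\cup Y'$ contains a special strand, $\sqcap(X'\cup Y',B_0)=1$, so $M|(X'\cup Y'\cup B_0)$ is a $2$-sum, with basepoint $q$, of a matroid $M_1$ on $X'\cup Y'\cup q$ and a matroid $M_2$ on $B_0\cup q$. Completeness of the special bunch makes $A'\cup B_0$ a circuit of $M$ for each special strand $A'\subseteq X'\cup Y'$, hence $A'\cup q$ a circuit of $M_1$; thus $q\in\cl_{M_1}(X')\cap\cl_{M_1}(Y')$, and Lemma~\ref{elem} applied \emph{inside $M_1$} (where $\{X',Y'\}$ is still a modular pair, since ranks of subsets of $X'\cup Y'$ are unchanged) gives $q\in\cl_{M_1}(X'\cap Y')$. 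Unwinding the $2$-sum yields a circuit $A_2\cup B_0$ of $M$ with $A_2\subseteq X'\cap Y'$, and Lemma~\ref{equiv2} then shows $A_2$ is a special strand inside $X'\cap Y'$, the desired contradiction. (Alternatively, Lemma~\ref{subeq} applied to the modular pair $\{X',Y'\}$ with the set $B_0$ gives $\sqcap(X'\cap Y',B_0)\ge 1$, from which one extracts a special strand in $X'\cap Y'$.) Your degenerate-case argument can be kept almost verbatim with the basepoint $q$ playing the role of $w$; as written, with $w$ treated as an element of $M''$, the proof does not go through.
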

 
Assume $\sqcap(X'\cap A,X'\cap B) = 0$. We know $X' \cap A$ or $X' \cap B$ contains a special strand. Then, by \ref{prestranded}, exactly one of $X' \cap A$ and $X'\cap B$, say $X' \cap A$, contains a strand.
Assume $Y' \cap A$ contains a special strand. Then $(X' \cup Y') \cap B = \emptyset$ otherwise 
$|(X' \cup Y') \cap A| < |X' \cup Y'|$ and we have a contradiction to the choice of $\{X',Y'\}$. We can now write $X'$ and $Y'$ for $X' \cap A$ and $Y' \cap A$. 

As $X' \cup Y'$ contains a special strand, $\sqcap(X' \cup Y',B_0) = 1$. Then $M|(X' \cup Y' \cup B_0)$ can be written as a 2-sum with basepoint $q$ of matroids $M_1$ and $M_2$ with ground sets $X' \cup Y' \cup q$ and $B_0 \cup q$. As each of $X'$ and $Y'$ contains a special strand, it follows that, in $M_1$, the element $q$ is in the closures of both $X'$ and $Y'$. Hence, by Lemma~\ref{elem}, $q \in \cl_{M_1}(X' \cap Y')$. Thus $M|(X' \cup Y' \cup B_0)$ has a circuit of the form $B_0 \cup A_2$ where $A_2 \subseteq X' \cap Y'$. As $1 \le \sqcap(A_2,B) \le \sqcap (X' \cap Y',B) = \sqcap(X'\cap Y', L) \le \sqcap(X',L) = 1$, it follows by Lemma~\ref{equiv2} that $A_2$ is a special strand contained in $X' \cap Y'$; 
a contradiction.

We now know that $Y' \cap A$ does not contain a special strand. Thus $Y' \cap B$ does. 
Assume $\sqcap(Y' \cap A,L) = 1$. Then, as $Y'\cap B$ contains a special strand, \ref{strand} implies that $Y'\cap A$ also contains a special strand; a contradiction. 
Thus $\sqcap(Y' \cap A, L) = 0$. As $\sqcap(Y' \cap B, L) = 1$, \ref{p16.8} implies that  
  $\sqcap(Y' \cap A,Y' \cap B) = 0$. Recall that we also know that $\sqcap(X' \cap A, L) = 1$ and $\sqcap(X' \cap B, L) = 0 = \sqcap(X' \cap A,X' \cap B)$. By \ref{sums}, 
$\sqcap((X' \cup Y') \cap A, (X' \cup Y') \cap B) = 0$. But $X' \cap A$ and $Y' \cap B$ both contain special strands, so $\sqcap(X' \cap A, Y' \cap B) \ge 1$; a contradiction. We conclude that $\sqcap(X'\cap A,X'\cap B) > 0$. By \ref{p40}, $\sqcap(X'\cap A,X'\cap B) < 2$. Hence $\sqcap(X'\cap A,X'\cap B) = 1$ and, by symmetry, \ref{allones} follows.

Since $\sqcap(X',L) = 1$, we deduce by \ref{p16.8} that $\sqcap(X' \cap A,L) = 1 = \sqcap(X' \cap B,L)$. Now $X'\cap A$ or $X' \cap B$, say $X' \cap A$,  contains a special strand, say $A_1$. By \ref{p16.8} again, 
$\sqcap(A_1 \cup (X' \cap B),L) + \sqcap(A_1,X' \cap B) = \sqcap(A_1,L) + \sqcap(X'\cap B,L)$, so $\sqcap(A_1,X'\cap B) = 1$. Thus $M$ has a circuit $A_1 \cup B_1$ for some $B_1 \subseteq X' \cap B$. Now, by \ref{p23},
$$1 = \sqcap(A_1,B_1) \le \sqcap(A,B_1)= \sqcap(L,B_1) \le \sqcap(L,X' \cap B) = 1,$$ 
so $\sqcap(A,B_1)$ = 1. Thus, by Lemma~\ref{equiv2}, $B_1$ is a $B$-strand. Since $\sqcap(A_1,B_1) = 1$, $B_1$ is a special strand. We conclude that $X'\cap A$ and $X' \cap B$ both contain special strands. By symmetry, both $Y' \cap A$ and $Y'\cap B$ also contain special strands.

By \ref{sums}, $\{X' \cap A, Y'\cap A\}$ or $\{X' \cap B, Y'\cap B\}$, say the former, is a modular pair.  But $|(X' \cup Y') \cap A| < |X' \cup Y'|$. As each of $X'\cap A$, $Y'\cap A$,  and $(X' \cup Y') \cap A$ is in ${\mathcal F}$, so is  $X' \cap Y' \cap A$. Thus  $X' \cap Y'  \in {\mathcal F}$. This contradiction completes the proof that $r$ is a matroid rank function, so $M$ has the desired extension.

To finish the proof of the theorem, it remains to show that $M'$ is  the unique extension of $M$ by $p$ such that $A_0 \cup p$ and $B_0 \cup p$ are circuits.  By the first part of the theorem and Lemma~\ref{complete}, the bunch of special strands of $M$ is complete. Let $M_p$ be an arbitrary extension of $M$ by $p$ in which $A_0 \cup p$ and $B_0 \cup p$ are circuits. Then $(A, B \cup p)$ is an exact $3$-separation of $M_p$ so it has an extension $M''_p$ by freely adding $x$ and $y$ to the guts line of $(A,B \cup p)$. Then, by Corollary~\ref{exterminate2}, $M''_p \ba p = M' \ba p = M''$. To complete the proof of the uniqueness of $M_p$, we shall show that $M''_p = M'$. 

For a subset $X$ of $E(M) \cup \{x,y\}$, we know that, in $M'$, we have  $r(X \cup p) = r(X)$ if and only if $X$ contains a special strand or $X$ spans $\{x,y\}$ in $M''$. Let $r_2$ be the rank function of $M''_p$.  We show next that 

\begin{sublemma}
\label{sss}
$r_2(X \cup p) = r_2(X)$ if $\cl_{M''}(X) \supseteq L$ or if $X$ contains a special strand. 
\end{sublemma}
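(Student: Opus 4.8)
The plan is to show that the single-element extension $M''_p$ of $M''$ by $p$ realizes at least those dependencies prescribed by the modular cut defining $M'$; concretely, to prove $p \in \cl_{M''_p}(X)$ whenever $\cl_{M''}(X) \supseteq L$ or $X$ contains a special strand. Throughout I write $L' = \cl_{M''_p}(\{x,y\})$ for the guts line of $(A, B\cup p)$ in $M''_p$, and I use that $M''_p \ba p = M''$, so that $r_2(Z) = r_{M''}(Z)$ for $Z \subseteq E(M'')$ and closures only grow when passing from $M''$ to $M''_p$.

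The first step is to place $p$ on $L'$. Since $A_0 \cup p$ and $B_0 \cup p$ are circuits of $M_p$ with $A_0 \subseteq A$ and $B_0 \subseteq B$, we have $p \in \cl_{M_p}(A) \cap \cl_{M_p}(B)$ and hence $p \in \cl_{M''_p}(A) \cap \cl_{M''_p}(B\cup p)$. As $\{x,y\}$ spans the rank-two guts of the exact $3$-separation $(A,B\cup p)$ of $M''_p$, this forces $p \in L'$. The case $\cl_{M''}(X) \supseteq L$ is then immediate: such an $X$ satisfies $\{x,y\} \subseteq \cl_{M''}(X) \subseteq \cl_{M''_p}(X)$, so $L' \subseteq \cl_{M''_p}(X)$ and therefore $p \in \cl_{M''_p}(X)$.

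For the remaining case it suffices, by the symmetry between $A$ and $B$, to take a special $A$-strand $A_1 \subseteq X$ and prove $p \in \cl_{M''_p}(A_1)$. By the first part of the theorem and Lemma~\ref{complete}, the bunch of special strands is complete, so $\sqcap(A_1, B_0) = 1 = \sqcap(A_0, B_0)$, giving circuits $A_0\cup B_0$ and $A_1 \cup B_0$ by Lemma~\ref{equiv}. Since $B_0$ is a $B$-strand we also have $\sqcap(A, B_0) = 1$, whence $\sqcap(A_0 \cup A_1, B_0) = 1$; and as $p \in \cl(B_0)$ this yields $\sqcap_{M''_p}(A_0 \cup A_1, B_0 \cup p) = 1$. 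Thus $M''_p | (A_0 \cup A_1 \cup B_0 \cup p)$ is a $2$-sum $N_1 \oplus_2 N_2$ with basepoint $q$, where $N_1$ has ground set $(A_0 \cup A_1) \cup q$ and $N_2$ has ground set $(B_0 \cup p) \cup q$, and the crossing circuits $A_0 \cup B_0$ and $A_1 \cup B_0$ show that $A_0 \cup q$ and $A_1 \cup q$ are circuits of $N_1$.

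The crux is to pin $p$ to the basepoint and then transfer this from $A_0$ to $A_1$. The circuit $A_0 \cup p$ of $M''_p$ is a crossing circuit of the $2$-sum, so it equals $(C_1 \cup C_2)\ba q$ for circuits $C_1, C_2$ of $N_1, N_2$ through $q$; reading off its traces $A_0$ and $\{p\}$ on the two sides forces $C_1 = A_0 \cup q$ and $C_2 = \{p,q\}$, so $p$ and $q$ are parallel in $N_2$. Combining this with the circuit $A_1 \cup q$ of $N_1$ makes $((A_1 \cup q) \cup \{p,q\})\ba q = A_1 \cup p$ a crossing circuit of the $2$-sum, hence a circuit of $M''_p$; in particular $p \in \cl_{M''_p}(A_1)$, as required. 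I expect this transfer step to be the main obstacle, because the naive route of propagating ``$p \in \cl(\cdot)$'' across adjacent strands by submodularity or circuit elimination fails: the common guts point of the special strands need carry no element of $M''$, so those inequalities run the wrong way, and one genuinely needs the rigidity supplied by forcing $A_0$ and $A_1$ onto a single shared $2$-sum basepoint via the strand $B_0$.
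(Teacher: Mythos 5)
Your proof is correct, and for the substantive case it takes a genuinely different route from the paper's. The case $\cl_{M''}(X) \supseteq L$ is handled identically in both. For the strand case, the paper argues by contradiction using circuit elimination: taking the special strand to be a $B$-strand $B_1$ and supposing $B_1 \cup p$ contains no circuit of $M''_p$, it eliminates between the circuits $A_0 \cup B_1$ and $A_0 \cup p$ to obtain a circuit $C$ containing $p$ that meets $A_0$ but avoids some $a \in A_0$, then strongly eliminates $p$ between $C$ and $B_0 \cup p$ to get a circuit $D$ with $D \cap A \subsetneqq A_0$; the minimality in the definition of the strand $A_0$ then gives $\sqcap(D,L) = 0$, contradicting that $D$ is a circuit meeting both $A$ and $B$. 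You instead give a direct construction: completeness of the special bunch supplies the circuits $A_0 \cup B_0$ and $A_1 \cup B_0$; the exact $2$-separation $(A_0 \cup A_1,\, B_0 \cup p)$ of $M''_p|(A_0 \cup A_1 \cup B_0 \cup p)$ yields a $2$-sum in which the circuit $A_0 \cup p$ pins $p$ parallel to the basepoint $q$; and the circuit $A_1 \cup q$ of the $A$-side part then transfers across the $2$-sum to make $A_1 \cup p$ a circuit of $M''_p$. Both proofs rest on the same pivot, namely completeness of the bunch of special strands (from the first part of the theorem and Lemma~\ref{complete}), together with the circuits $A_0 \cup p$ and $B_0 \cup p$. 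Your route mirrors the $2$-sum device the paper itself uses in Lemma~\ref{equiv2} and in the proof of \ref{allones}, is direct rather than by contradiction, and delivers slightly more ($A_1 \cup p$ is shown to be a circuit, not merely dependent), while the paper's route stays entirely within circuit-exchange axiomatics and needs no decomposition machinery. The only gloss in yours is the degenerate cases: if $A_1 = A_0$ the conclusion is immediate (as the paper notes explicitly), and one should observe that the restriction being decomposed is connected (the three circuits pairwise share elements), so the $2$-sum decomposition applies without qualification; this is on par with the level of detail the paper itself allows.
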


Suppose $\cl_{M''}(X) \supseteq L$. Then $p \in \cl_{M_p''}(A) \cap \cl_{M_p''}(B) = \cl_{M_p''}(L)$, so $r_2(X \cup p) = r_2(X)$. On the other hand, suppose $X$ contains a special strand.  If this strand is $A_0$ or $B_0$, then certainly  $r_2(X \cup p) = r_2(X)$. Hence, by symmetry, we may assume the special strand contained in $X$ is a $B$-strand, $B_1$. We may also assume that $B_1 \cup p$ does not contain a circuit of $M_p''$, otherwise 
$r_2(X \cup p) = r_2(X)$.   As the bunch of special strands of $M$ is complete,  $\sqcap(A_0, B_1) = 1$, so $A_0 \cup B_1$ is a circuit of $M$. As $M_p''$ has $A_0 \cup B_1$ and $A_0 \cup p$ as circuits, it has a circuit $C$ that contains $p$ and avoids some element $a$ of $A_0$. As $B_1 \cup p$ does not contain a circuit, $C$ must contain some element, say $a'$, of $A_0$ Now, by strong circuit elimination, $M_p''$ has a circuit $D$ that contains $a'$ and is contained in $(C \cup B_0) - p$. As $D \cap A \subsetneqq A_0$ and $A_0$ is a strand, it follows that $\sqcap(D,L) = 0$. But this is a contradiction since $D$ is a circuit meeting both $A$ and $B$. We conclude that \ref{sss} holds. 

To complete the proof of the uniqueness of $M_p$, we assume that $Z$ is a minimal subset of $E(M'')$ such that $r_2(Z \cup p) = r_2(Z)$ but $\cl_{M''}(Z) \not \supseteq L$ and  $Z$ does not contain a special strand.   The contradiction we obtain will imply that $M''_p = M'$. By minimality, $Z \cup p$ must be a circuit of $M''_p$.

\begin{sublemma}
\label{zztop} 
$Z \cap \{x,y\} = \emptyset.$
\end{sublemma}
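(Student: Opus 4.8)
The plan is to prove the sublemma \ref{zztop} by assuming it fails and deriving a contradiction with the minimality of $Z$. So suppose $Z \cap \{x,y\} \neq \emptyset$; by the clone symmetry of $x$ and $y$ in $M''$ (and hence in $M''_p \ba p = M''$) I may assume $x \in Z$. Since $Z \cup p$ is a circuit of $M''_p$, I would first try to pivot off of $x$ using the fact that $x$ lies on the guts line $L$, i.e. $x \in \cl_{M''}(A) \cap \cl_{M''}(B)$ and $\{x,y\}$ spans $L$. The goal is to replace $x$ by something smaller: either to show $Z - x$ already has the forbidden property $r_2((Z-x) \cup p) = r_2(Z-x)$ with $\cl_{M''}(Z-x) \not\supseteq L$ and $Z - x$ containing no special strand, which contradicts minimality; or to show that $Z$ actually does span $L$ or contain a special strand, contradicting the standing assumption on $Z$.

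The key structural input is that $Z \cup p$ is a circuit of $M''_p$ meeting $\{x,y\}$. I would consider the two cases according to whether $y \in Z$ as well. If both $x,y \in Z$, then $\cl_{M''}(Z) \supseteq \cl_{M''}(\{x,y\}) = L$, directly contradicting $\cl_{M''}(Z) \not\supseteq L$; so in fact $y \notin Z$, and I am reduced to $x \in Z$, $y \notin Z$. Now write $Z = W \cup x$ with $W \subseteq E(M)$ (using $y \notin Z$). Since $x \in \cl_{M''}(A)$ and $x \in \cl_{M''}(B)$, I would use circuit exchange in $M''_p$ between the circuit $Z \cup p = W \cup \{x,p\}$ and a short circuit through $x$ contained in $A \cup x$ or $B \cup x$ (such circuits exist because $x$ is spanned by both $A$ and $B$) to produce a circuit $C'$ through $p$ that avoids $x$. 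The element $p$ cannot be a loop or coloop here, since $A_0 \cup p$ is already a circuit, so such exchanges keep $p$. I then analyse $C' - p \subseteq E(M)$: by \ref{sss} and the minimality of $Z$, the set $C' - p$ must either span $L$ or contain a special strand, and in either case I can trace this property back to force the same on $Z - x = W$, contradicting minimality.

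The main obstacle I anticipate is controlling the interaction between the two short circuits through $x$ (one inside $A \cup x$, one inside $B \cup x$) and the circuit $Z \cup p$, so that the resulting exchanged circuit lands in a set I can compare against $Z$. The cleanest route is probably to argue directly about ranks rather than circuits: from $x \in \cl_{M''}(W)$ (which is what $r_2((W\cup x) \cup p) = r_2(W \cup x)$ together with $x \in Z$ should yield, since otherwise $Z\cup p$ being a circuit forces $x \notin \cl(Z-x)$ and I can relocate the dependency) I would deduce $r_2(W \cup p) = r_2(W)$, i.e. $W = Z - x$ itself has the forbidden rank collapse. Then $W$ is a strictly smaller set, and I must check that $W$ neither spans $L$ (else $\cl_{M''}(Z)\supseteq L$, contradiction) nor contains a special strand (else $Z$ does, contradiction), giving the minimality violation. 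The delicate point is the rank computation showing $x \in \cl_{M''}(W)$: this is where I would invoke that $x$ is non-fixed and lies freely on $L$, so that any circuit of $M''_p$ through both $x$ and $p$ that meets $E(M)$ must already certify a dependency among the $M$-part, and Lemma~\ref{elem} or the local-connectivity identities \ref{p23} and \ref{p16.8} will supply the needed modularity to push $x$ off into $\cl_{M''}(W)$.
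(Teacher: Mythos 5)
Your reduction to the case $x \in Z$, $y \notin Z$ is correct and matches the paper: if $\{x,y\} \subseteq Z$ then $\cl_{M''}(Z) \supseteq L$ at once. But both of your proposed ways of finishing have genuine gaps. In the circuit-exchange route, the circuit $C'$ you produce by eliminating $x$ between $Z \cup p$ and a circuit $C_A \subseteq A \cup x$ satisfies only $C' \subseteq (Z \cup C_A \cup p) - x$; it need not be contained in $Z \cup p$, so $C' - p$ is in general neither a subset of $Z$ nor smaller than $Z$, and the minimality of $Z$ says nothing about it (minimality constrains only smaller sets having the rank-collapse property, and \ref{sss} gives a sufficient condition for that collapse, not a necessary one). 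Even granting that $C' - p$ spans $L$ or contains a special strand, that property may live partly on $C_A - Z$, and you give no argument for transferring it to $W = Z - x$. The ``cleanest route'' is worse: it is impossible. Since $Z \cup p$ is a circuit of $M''_p$, its proper subset $Z$ is independent, so $x \notin \cl_{M''_p}((Z - x) \cup p) \supseteq \cl_{M''}(W)$; the statement $x \in \cl_{M''}(W)$ that you aim to prove is simply false, and no appeal to non-fixedness, Lemma~\ref{elem}, or \ref{p23}/\ref{p16.8} can rescue it.

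The idea you are missing is to exploit the clone relation between $x$ and $y$ in $M''_p$ itself, not merely in $M''$: since $M''_p$ arises by freely adding $x$ and $y$ to the guts line of $(A, B \cup p)$ in $M_p$, the transposition of $x$ and $y$ is an automorphism of $M''_p$. Applied to the circuit $Z \cup p$ (which contains $x$ but not $y$), it yields a second circuit $(Z - x) \cup y \cup p$. Strong circuit elimination on $p$ between these two circuits, retaining $y$, produces a circuit avoiding $p$ --- hence a circuit of $M''_p \ba p = M''$ --- that contains $y$ and lies in $Z \cup y$. Thus $y \in \cl_{M''}(Z)$, and since $x \in Z$ this gives $L \subseteq \cl_{M''}(Z)$, contradicting the choice of $Z$. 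Note that this argument uses nothing about $Z$ beyond the facts that $Z \cup p$ is a circuit and $\cl_{M''}(Z) \not\supseteq L$; in particular it needs no auxiliary circuits inside $A \cup x$ and no appeal to minimality beyond what made $Z \cup p$ a circuit in the first place.
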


Clearly $\{x,y\} \not\subseteq Z.$  Suppose $|Z \cap \{x,y\}| = 1$. Then we may assume that $x \in Z$. As $y$ is a clone of $x$ in $M''_p$, we deduce that $(Z-x) \cup y \cup p$ is a circuit of $M''_p$. Thus $Z \cup y$ contains a circuit of $M''$ containing $y$. Hence $\cl_{M''}(Z) \supseteq L$; a contradiction. Thus \ref{zztop} holds.

Assume first that $Z \subseteq A$. As $Z \cup p$ and $B_0 \cup p$ are circuits of $M''_p$, we deduce that $Z \cup B_0$ contains a circuit $Z' \cup B_0'$ of $M''_p$, which must also be a circuit of $M$. Now $1 \le \sqcap(A,B'_0) \le \sqcap(A,B_0) = 1$. Thus, as $B'_0$ is a $B$-strand, $B'_0 = B_0$. 
Suppose $\sqcap(Z',B) = 1$. Then $Z'$ is an $A$-strand of $M$ by Lemma~\ref{equiv2}. As $Z' \cup B_0$ is a circuit, $Z'$ is a special strand; a contradiction. We deduce that $\sqcap(Z',B) = 2$, so, by \ref{p23}, $\sqcap(Z',L) = 2$. Hence $\cl_{M''}(Z') \supseteq L$; a contradiction. 

We may now assume that $Z = A_Z \cup B_Z$ where $A_Z = A \cap Z$ and $B_Z = B \cap Z$ and both $A_Z$ and $B_Z$ are non-empty. We also know that neither $A_Z$ nor $B_Z$ contains a special strand. Both  $B_0 \cup p$ and $Z \cup p$ are circuits of $M_2$. Take $a$ in $A_Z$. Then $M''_p$ has a circuit that contains $a$ and avoids $p$. This circuit is a circuit of $M''$. Thus $\sqcap(A_Z,B) \ge 1$. As $\sqcap(A_Z,L) \neq 2$, we deduce that $\sqcap(A_Z,B) = 1$. Hence $A_Z$ contains an $A$-strand $A'_Z$ of $M$. Likewise, $B_Z$ contains a  $B$-strand $B'_Z$ of $M$. 
As $A'_Z \cup B'_Z$ is a proper subset of the circuit $Z \cup p$ of $M''_p$, it follows that $A'_Z \cup B'_Z$ is independent in $M$. Thus 
  $\sqcap(A'_Z,B'_Z) = 0$. Also, $\sqcap(A'_Z \cup B'_Z, L) \neq 2$ as $\sqcap(Z,L) \neq  2$. Now, by Lemma~\ref{PQR}, 
$$\sqcap(A'_Z \cup B'_Z, L) + \sqcap(A'_Z,B'_Z) = \sqcap(A'_Z \cup L, B'_Z) + \sqcap(A'_Z,L).$$
Since the right-hand side is at least two but the left-hand side is at most one, we have a contradiction. We conclude that the extension $M_p$ of $M$ is unique.
\end{proof}

\section{Multiple Extensions}
\label{mc}

In Theorem~\ref{bigun},  we gave conditions for the existence of a certain  extension of a matroid by a fixed element in the guts of an exact $3$-separation. We begin this section by proving Theorem~\ref{multi}, which allows us to do two   extensions of the type in Theorem~\ref{bigun}. Using Theorem~\ref{multi} will enable us to establish the following more general result. 

\begin{theorem}
\label{multi2}
Let $T$ be an $n$-vertex tree whose vertices are labelled by non-empty disjoint sets $X_1,X_2,\dots,X_n$. Let $E = X_1 \cup X_2\cup \dots \cup X_n$ and let $M$ be a matroid with ground set $E$. Suppose that, for every edge $e$ of $T$, the induced partition $(Y_e,Z_e)$ of $E$ is an exact $3$-separation of $M$ and there are $Y_e$- and $Z_e$-strands $Y_{e1},Y_{e2},\dots,Y_{em_e}$ and $Z_{e1},Z_{e2},\dots,Z_{em_e}$ such that, for each $k$ in $\{1,2,\dots,m_e\}$, the local connectivity $\sqcap(Y_{ek},Z_{ek}) = 1$ and there is an extension of $M$ by an element $p_{ek}$ in which $Y_{ek} \cup p_{ek}$ and $Z_{ek} \cup p_{ek}$ are circuits. Then $M$ can be   extended by $\bigcup_{e \in E(T)} \{p_{e1},p_{e2},\dots, p_{em_e}\}$   to produce a matroid $M'$ in which $Y_{ek} \cup p_{ek}$ and $Z_{ek} \cup p_{ek}$ are circuits for all $e$ in $E(T)$ and all $k$ in $\{1,2,\dots,m_e\}$. Moreover, the matroid $M'$ is unique. 
\end{theorem}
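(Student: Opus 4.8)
The plan is to induct on the total number $N = \sum_{e \in E(T)} m_e$ of elements to be added, inserting them one at a time and using Theorems~\ref{bigun} and~\ref{multi} as the engine. Discarding any edge with $m_e = 0$, we may assume $N \ge 1$; the base case $N = 0$ gives $M' = M$, which is vacuously unique. First I would record the geometric fact underlying everything: since $Y_{ek} \cup p_{ek}$ and $Z_{ek} \cup p_{ek}$ are required to be circuits, each added point satisfies $p_{ek} \in \cl(Y_e) \cap \cl(Z_e)$, the rank-$2$ guts flat of the exact $3$-separation $(Y_e, Z_e)$.

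For the inductive step, I would choose one element $p_{ek}$. By hypothesis $M$ has an extension adding $p_{ek}$ with $Y_{ek} \cup p_{ek}$ and $Z_{ek} \cup p_{ek}$ circuits, and by Theorem~\ref{bigun} this extension is unique; call it $M_1$. I then form a new labelled tree on $E(M_1)$ using the same tree $T$ but adjoining $p_{ek}$ to the label of the endpoint $w$ of $e$ lying on the $Y_e$-side. The point of this choice is a tree-geometry observation: for every edge $g$ of $T$ other than $e$, removing $g$ leaves both endpoints of $e$ on a single side $S_w$ (the one containing $w$), while the opposite side is a subtree lying wholly inside $Y_e$ or wholly inside $Z_e$; hence $S_w$ contains $Z_e$ or $Y_e$ respectively, and in either case $p_{ek} \in \cl(S_w)$ (for $g = e$ itself the relation $p_{ek} \in \cl(Y_e)$ is immediate). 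Consequently each edge-induced partition remains an exact $3$-separation of $M_1$, since adjoining $p_{ek}$ to the closure of one side does not change the relevant local connectivity; and, because ranks of sets avoiding $p_{ek}$ are identical in $M$ and $M_1$, each surviving strand $Y_{fj}$ or $Z_{fj}$ is still a strand of $M_1$ with $\sqcap(Y_{fj}, Z_{fj}) = 1$.

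The one remaining hypothesis to re-establish for $M_1$ is that it still admits the required single-element extension by each surviving $p_{fj}$, and this is exactly where Theorem~\ref{multi} does the work. Given a surviving pair $(Y_{fj}, Z_{fj})$, I would apply Theorem~\ref{multi} to $M$ with the partition determined by the two edges $e$ and $f$: writing the three subtrees cut off by $e$ and $f$ as $\alpha$ (the $e$-side away from $f$), $\beta$ (the middle, empty precisely when $f = e$), and $\gamma$ (the $f$-side away from $e$), I take $(X, Y, Z) = (\alpha, \beta, \gamma)$, so that $(Y_{ek}, Z_{ek})$ and $(Y_{fj}, Z_{fj})$ are, up to the natural orientation of each separation, the two strand-pairs in the statement of Theorem~\ref{multi} (the case $\beta = \emptyset$ covering two pairs on a single edge). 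Theorem~\ref{multi} then yields a unique extension of $M$ by $p_{ek}$ and $p_{fj}$ realizing all four circuits; deleting $p_{fj}$ and invoking the uniqueness clause of Theorem~\ref{bigun} shows this extension restricts to $M_1$, so $M_1$ has the desired extension by $p_{fj}$.

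With all hypotheses verified, the induction hypothesis applies to $M_1$ and its $(N-1)$-element problem, producing the unique extension $M'$, which is the sought extension of $M$. Uniqueness of $M'$ as an extension of $M$ follows by the same peeling argument: any extension $N'$ of $M$ realizing all the prescribed circuits restricts, upon deleting every added element except $p_{ek}$, to an extension of $M$ by $p_{ek}$ with $Y_{ek} \cup p_{ek}$ and $Z_{ek} \cup p_{ek}$ circuits, hence equals $M_1$ by Theorem~\ref{bigun}; then $N'$ is an extension of $M_1$ realizing the remaining circuits, so $N' = M'$ by the inductive uniqueness. I expect the main obstacle to lie in the bookkeeping of this inductive step: confirming that no spurious new strands are introduced on the side receiving $p_{ek}$ (in particular the singleton strand $\{p_{ek}\}$, which has local connectivity $1$ with the opposite side, must not interfere with the surviving pairs) and that the single tree-geometry assignment of $p_{ek}$ to the vertex $w$ is consistent with every edge simultaneously, so that Theorem~\ref{multi} can be applied cleanly to each pair.
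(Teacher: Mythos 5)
Your proposal is correct and follows essentially the same route as the paper: extend by one element at a time, use Theorem~\ref{multi} (applied to the partition induced by the relevant pair of tree edges, with the middle part possibly empty) together with the uniqueness clause of Theorem~\ref{bigun} to show that the remaining single-element extension hypotheses persist, and obtain uniqueness by the same peeling argument. The paper's proof is just a terser version of this iteration; your write-up supplies the tree-relabelling and hypothesis-preservation details that the paper leaves implicit.
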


\begin{proof}[Proof of Theorem~\ref{multi}.] 
By Corollary~\ref{exterminate2}, we can freely add elements $x$ and $y$ to the guts line of $(X,Y\cup Z)$ to get a unique extension $M''$ of $M$. 

\begin{sublemma} 
\label{ext1}
$M''$ has an extension by $p$ in which $X_0 \cup p$ and $Y_0 \cup p$ are circuits. Moreover, $M''$ has an extension by $q$ in which  $Y_1 \cup q$ and $Z_1 \cup q$ are circuits.
\end{sublemma}

By Lemma~\ref{unicorn}, $M''$ has an extension   by $p$ in which $X_0 \cup p$ and $Y_0 \cup p$ are circuits. Now $M$ has an extension $M_q$ by $q$ such that $Y_1 \cup q$ and $Z_1 \cup q$ are circuits. As $M_q$ has $(X,Y\cup Z \cup q)$ as an exact $3$-separation, by   Corollary~\ref{exterminate2}, we can freely add elements $x$ and $y$ to the guts line of $(X,Y\cup Z \cup q)$ to get an extension $M_q''$ of $M_q$. Then, in  $M_q'' \ba q$, the elements $x$ and $y$ are independent clones on the guts line of  $(X,Y\cup Z)$. The uniqueness of $M''$ implies that 
$M_q'' \ba q = M''$. We conclude that \ref{ext1} holds. 

It will be convenient to work with the elements $x$ and $y$. Thus, in the argument that follows, we  replace $M''$ by $M$. This means that we   assume that $\{x,y\} \subseteq E(M)$. Indeed, we  assume that $\{x,y\} \subseteq Y$ noting that $x$ and $y$ are independent clones in $M$, and $\{x,y\} \subseteq \cl_M(X) \cap \cl_M((Y - \{x,y\}) \cup Z)$. 

Let $M_p$ be the extension of $M$ by the element $p$ such that $X_0 \cup p$ and $Y_0 \cup p$ are circuits. We want to show that $M_p$ has an extension by $q$ in which $Y_1 \cup q$ and $Z_1 \cup q$ are circuits. If such an extension exists, it is unique. Observe that $Y_1$ is an $(X \cup Y \cup p)$-strand of $M_p$, that $Z_1$ is a $Z$-strand of $M_p$, and that $\sqcap(Y_1,Z_1) = 1$. We assume that $M_p$ does not have the desired extension by $q$. Then, by Theorem~\ref{bigun}, $M_p$ has an $(X\cup Y \cup p)$-stand $Y_2$ and a $Z$-strand $Z_2$ such that exactly two of 
$\sqcap(Y_1,Z_2)$, $\sqcap(Y_2,Z_1)$, and $\sqcap(Y_2,Z_2)$ are one. Clearly $Z_2$ is a $Z$-strand of $M$. Because $M$ has an extension by $q$ in which $Y_1 \cup q$ and $Z_1 \cup q$ are circuits, it follows that $p \in Y_2$ otherwise $\sqcap(Y_1,Z_2)$, $\sqcap(Y_2,Z_1)$, and $\sqcap(Y_2,Z_2)$ give a violation of Theorem~\ref{bigun}.

Let $L$ be the line of $M_p$ that is spanned by $\{x,y\}$. Then $p \in L$. Now, for $k$ in $\{1,2\}$, Lemma~\ref{equiv} implies that $\sqcap(Y_2,Z_k) = 1$  if and only if $Y_2 \cup Z_k$  is a circuit of $M_p$.  Hence 

\begin{sublemma}
\label{subs1}
$M_p$ has  $Y_2 \cup Z_i$  as a circuit for some $i$ in $\{1,2\}$.
\end{sublemma}

We divide the rest of the argument into two cases based on whether or not  $Y_2 \cap X$   is empty. 
Suppose first that $Y_2 \cap X  = \emptyset$. 

\begin{sublemma}
\label{newcirc}
For $k$ in $\{1,2\}$, the set $Y_2 \cup Z_k$ is a circuit of $M_p$ if and only if $X_0 \cup (Y_2 - p) \cup Z_k$ is a circuit of $M$. 
\end{sublemma}

As $\sqcap(X_0,Y\cup Z \cup p) = 1$ and $M_p$ has $X_0 \cup p$ as a circuit,   $M_p\ba (X - X_0)$ is the parallel connection with basepoint $p$ of $M_p|(X_0 \cup p)$ and $M_p \ba X$. Thus   \ref{newcirc} holds.

As $Y_2 \cup Z_i$ is a circuit of $M_p$, we see that $X_0 \cup (Y_2 - p) \cup  Z_i$ is a circuit of $M$. Next we show that 

\begin{sublemma}
\label{newstrand}
$X_0 \cup (Y_2 - p)$ is an $(X \cup Y)$-strand of $M$; and  $\sqcap(X_0 \cup (Y_2 - p), Z_k) =  \sqcap(Y_2, Z_k)$ for each $k$ in $\{1,2\}$. 
\end{sublemma}

By Lemma~\ref{PQR}, we have  
$$\sqcap(X_0 \cup (Y_2 - p), Z) + \sqcap(X_0, Y_2 - p) = \sqcap(X_0, (Y_2 - p)  \cup  Z) + \sqcap(Y_2 - p, Z).$$
As $M_p$ has $Y_2$ as an $(X \cup Y \cup p)$-strand, $\sqcap(Y_2 - p, Z) = 0$.  It follows, since $\sqcap(X_0,L) = 1$,  that 
$\sqcap(X_0 \cup (Y_2 - p), Z) \le 1$. As $M$ has $X_0 \cup (Y_2 - p) \cup  Z_i$ as a circuit, $\sqcap(X_0 \cup (Y_2 - p), Z) = 1$. Thus  Lemma~\ref{equiv2} implies that $X_0 \cup (Y_2 - p)$ is an $(X \cup Y)$-strand of $M$. The second part of \ref{newstrand} follows immediately by combining Lemma~\ref{equiv} with \ref{newcirc} since each of $\sqcap(X_0 \cup (Y_2 - p), Z_k)$ and  $\sqcap(Y_2, Z_k)$ is in $\{0,1\}$. 

Now $\sqcap(Y_1,Z_1) = 1$ and exactly two of $\sqcap(Y_1,Z_2)$, $\sqcap(Y_2,Z_1)$, and $\sqcap(Y_2,Z_2)$ are one. Thus, by \ref{newstrand}, exactly two of $\sqcap(Y_1,Z_2)$, $\sqcap(X_0 \cup (Y_2 - p),Z_1)$, and $\sqcap(X_0 \cup (Y_2 - p),Z_2)$ are one. Hence  the $(X \cup Y)$-strand $X_0 \cup (Y_2 - p)$ and the $Z$-strand $Z_2$ of $M$     contradict   Theorem~\ref{bigun}. 
 This completes the argument when $Y_2 \cap X  = \emptyset$.

We may now assume that $Y_2 \cap X  \neq \emptyset$. We show first that 

\begin{sublemma}
\label{subs2}
$\sqcap(Y_2,L) = 2$ and $\sqcap(Y_2\cap X,L) = 1$.
\end{sublemma}

Since $p \in Y_2 \cap L$, we see that $\sqcap(Y_2,L) \ge 1$.  From the circuit $Y_2 \cup Z_i$, we deduce that 
$\sqcap(Y_2 \cap X, (Y_2 - X) \cup Z_i) = 1$, so $\sqcap(Y_2 \cap X, Y \cup Z) \ge 1$. Hence $\sqcap(Y_2 \cap X, L) \ge 1$.  As $p \in Y_2 - X$, it follows that $Y_2 \cap X$ does not span $p$ otherwise $(Y_2 \cap X) \cup p$ contains a circuit that is properly contained in $Y_2 \cup Z_i$.

As $Y_2$ is independent,  $\sqcap(Y_2 \cap X, Y_2 - X) = 0$. Thus, by Lemma~\ref{PQR},
\begin{eqnarray*}
2 \ge  \sqcap(Y_2,L) & = &\sqcap((Y_2 \cap X) \cup  (Y_2 - X), L) + \sqcap(Y_2 \cap X, Y_2 - X)\\
& = &\sqcap(Y_2 \cap X, (Y_2 - X) \cup L) + \sqcap(Y_2 - X, L)\\
& \ge &\sqcap(Y_2 \cap X,  L) + \sqcap(Y_2 - X, L)\\
& \ge & 1 + 1,
\end{eqnarray*}
where the last inequality follows because $p \in Y_2  - X$. We deduce that \ref{subs2} holds. 

Since $Y_2 \cup Z_i$ is a circuit, it now follows from Lemma~\ref{equiv2} that 

\begin{sublemma}
\label{subs3}
$Y_2 \cap X$ is an $X$-strand of $M_p$. 
\end{sublemma}

Because $\sqcap(Y_2 \cap X, L) = 1$, we see that $M_p \ba (X - Y_2)$ has 
$(Y_2 \cap X, E(M_p) - X)$ as a $2$-separation. As $Y_2 \cup Z_i$ is a circuit of $M_p$,  the elements of $Y_2 \cap X$ are in series in $M_p \ba (X - Y_2)$. Pick $u$ in $Y_2 \cap X$ and contract the elements of $(Y_2 \cap X) - u$ from $M_p\ba (X - Y_2)$. In this matroid $M'_p$, the set $\{u,p\} \cup (Y_2 - (X \cup p)) \cup Z_i$ is a circuit. Restricting $M'_p$ to the union of this circuit and $\{x,y\}$, we see that the resulting matroid is the $2$-sum with basepoint $w$ of a $5$-point line $\{u,p,x,y,w\}$ and a circuit $w \cup (Y_2 - (X \cup p)) \cup Z_i$. We deduce that 

\begin{sublemma}
\label{subs7}
$(Y_2 - (X \cup p)) \cup Z_i \cup \{x,y\}$ and $(Y_2 \cap X)   \cup \{x,y\}$ are circuits of $M$.
\end{sublemma}

We show next that 

\begin{sublemma}
\label{subs5}
$\sqcap((Y_2  - (X \cup p)) \cup \{x,y\}, Z) = 1$.
\end{sublemma}

As $\{x,y\}$ spans $p$, we see that 
$\sqcap((Y_2  - (X \cup p)) \cup \{x,y\}, Z) =  \sqcap((Y_2  -  X) \cup \{x,y\}, Z).$ By Lemma~\ref{PQR}, 
$$\sqcap((Y_2  -  X) \cup \{x,y\}, Z) + \sqcap(Y_2  -  X, \{x,y\})  = \sqcap(\{x,y\}, (Y_2  -  X) \cup   Z) +  \sqcap(Y_2  -  X , Z).$$
As $Y_2$ is an $(X \cup Y \cup p)$-strand meeting $X$, the right-hand side is at most $2$.   Now $\sqcap(Y_2  -  X, \{x,y\}) \ge 1$, so 
 $\sqcap((Y_2  -  X) \cup \{x,y\}, Z) \le 1$. The result follows by \ref{subs7}.

By Lemma~\ref{equiv2}, \ref{subs7},  and \ref{subs5}, we deduce that 

\begin{sublemma}
\label{subs6}
$(Y_2  - (X \cup p)) \cup \{x,y\}$ is an $(X \cup Y)$-strand of $M$.
\end{sublemma}

We now apply Theorem~\ref{bigun}. Suppose that $\sqcap(Y_1,Z_2) = 0$. Then, for each $k$ in $\{1,2\}$, we have $\sqcap(Y_2,Z_k) = 1$, so  $Y_2 \cup Z_k$ is a circuit of $M_p$. Hence, replacing $Z_i$ by $Z_k$ in the proof of \ref{subs7} gives that  $(Y_2  - (X \cup p)) \cup \{x,y\} \cup Z_k$ is a  circuit of $M$. Thus 
$\sqcap(Y_2  - (X \cup p)) \cup \{x,y\}, Z_k) = 1$  and the $(X \cup Y)$-strand $(Y_2  - (X \cup p)) \cup \{x,y\}$ and the $Z$-strand $Z_2$ yield a  contradiction to Theorem~\ref{bigun} in $M$. 

We may now assume that 
 $\sqcap(Y_1,Z_2) = 1$. As $\sqcap(Y_2,Z_i) = 1$, we have    $\sqcap(Y_2,Z_j) = 0$ where $\{i,j\} = \{1,2\}$. Now $(Y_2 - (X \cup p)) \cup Z_i \cup \{x,y\}$ is a circuit of $M$, so, by Theorem~\ref{bigun}, $\sqcap(Y_2 - (X \cup p)) \cup \{x,y\}, Z_j) = 1$. Hence, by Lemma~\ref{equiv}, 
 $M$ has $(Y_2 - (X \cup p))  \cup \{x,y\} \cup Z_j$ as a circuit. Thus 
 
\begin{sublemma}
\label{subs4}
$r((Y_2 - (X \cup p)) \cup Z_j \cup \{x,y\})  = |Y_2 - X| + |Z_j|.$
\end{sublemma}

Now 
\begin{eqnarray*}
r(Y_2 \cup Z_j) & = & r(Y_2 \cup Z_j\cup \{x,y\}) \text{~~as $\sqcap(Y_2,L) = 2$;}\\
 & =  &  r((Y_2-p) \cup Z_j\cup \{x,y\})\\
 & =  &  r((Y_2 \cap X) \cup (Y_2-(X \cup p) \cup Z_j \cup \{x,y\} )\\
 & \le &  r(Y_2 \cap X) + r((Y_2-(X \cup p)  \cup Z_j \cup \{x,y\}) - 1 \text{~~as $\sqcap(Y_2 \cap X,L) = 1$;}\\
 & \le &  |Y_2 \cap X| + (|Y_2 - X| + |Z_j|) - 1  \text{~~by \ref{subs4};}\\
 & = &  |Y_2| + |Z_j| - 1.
\end{eqnarray*}
Thus $\sqcap(Y_2,Z_j) \ge 1$; a contradiction.
\end{proof}

\begin{proof}[Proof of Theorem~\ref{multi2}.] 
Take some edge $f$ of $T$. We can extend $M$ by $p_{f1}$ to get a matroid $M_1$ in which $Y_{f1} \cup p_{f1}$ and $Z_{f1} \cup p_{f1}$ are circuits. By Theorem~\ref{multi}, for all pairs $(e,k)$ other than $(f,1)$  for which $e \in E(T)$ and $k \in \{1,2,\dots, m_e\}$,   we can extend $M_1$ by $p_{ek}$ so that 
$Y_{ek} \cup p_{ek}$ and $Z_{ek} \cup p_{ek}$ are circuits. We now repeat this process using $M_1$ in place of $M$. Continuing in this way, it is clear that we will obtain the required result. At each stage of the process, the matroid we obtain is unique, so the matroid obtained at the conclusion of the process is unique.
\end{proof}

\section*{Acknowledgements.} The author thanks Jim Geelen for very helpful discussions concerning this paper and, in particular, for privately conjecturing Theorems~\ref{bigun} and \ref{multi}.

\end{document}